\newcommand{\bigO}{\mathcal{O}}
\newcommand{\half}{\frac{1}{2}}
\newcommand{\R}{\mathbb{R}}
\newcommand{\N}{\mathbb{N}}
\newcommand{\ddx}{\frac{\mathrm{d}}{\mathrm{d}x}}
\newcommand{\ddy}{\frac{\mathrm{d}}{\mathrm{d}y}}
\newcommand{\pddx}{\frac{\partial}{\partial x}}
\newcommand{\pddy}{\frac{\partial}{\partial y}}
\newcommand{\hdop}{H}
\newcommand{\bighdop}{\mathbb{\hdop}}
\newcommand{\hdopnk}{\hdop_{n,k}}
\newcommand{\hdopnkab}{\hdop_{n,k}^{(a,b)}}
\newcommand{\Wab}{{W^{(a,b)}}}
\newcommand{\hdopmj}{\hdop_{m,j}}
\newcommand{\Dnt}{D^\top_n}
\newcommand{\jac}{{P}}
\newcommand{\genjac}{R}
\newcommand{\genjacnmk}{\genjac_{n-k}}
\newcommand{\genjacmmj}{\genjac_{m-j}}
\newcommand{\genjacw}{w_\genjac}
\newcommand{\jacw}{w_P}
\newcommand{\normgenjac}{\omega_\genjac}
\newcommand{\normjac}{\omega_P}
\newcommand{\element}{\tau}
\newcommand{\FEset}{\mathcal{T}}
\newcommand{\bigW}{\mathbb{W}}
\newcommand{\hdopnkabc}{\hdop_{n,k}^{(a,b,c)}}
\newcommand{\Wabc}{{W^{(a,b,c)}}}
\newcommand{\betaabc}{\beta^{(a,b,c)}}
\newcommand{\bighdopabc}{\bighdop^{(a,b,c)}}
\newcommand{\Wiii}{W^{(1,1,1)}}
\newcommand{\hdopiii}{\hdop^{(1,1,1)}}
\newcommand{\bighdopiii}{{\mathbb{\hdop}^{(1,1,1)}}}
\newcommand{\bighdopooo}{{\mathbb{\hdop}^{(0,0,0)}}}
\newcommand{\laplacewiii}{\Delta_W^{(1,1,1)\to(1,1,1)}}
\newcommand{\laplacewttt}{\Delta_W^{(2,2,2)\to(0,0,0)}}
\newcommand{\laplaceooo}{\Delta^{(0,0,0)\to(2,2,2)}}
\newcommand{\biharmonictwo}{_2\Delta_W^{(2,2,2)\to(2,2,2)}}
\newcommand{\bigWiii}{{\mathbb{W}^{(1,1,1)}}}
\newcommand{\bigWabc}{\mathbb{W}^{(a,b,c)}}
\newcommand{\alphaabcd}{\alpha^{(a,b,c,d)}}
\newcommand{\betaabcd}{\beta^{(a,b,c,d)}}
\newcommand{\hdopnkabcd}{\hdop_{n,k}^{(a,b,c,d)}}
\newcommand{\hdopabcd}{\hdop^{(a,b,c,d)}}
\newcommand{\Wabcd}{{W^{(a,b,c,d)}}}
\newcommand{\hdopmjabcd}{\hdop_{m,j}^{(a,b,c,d)}}
\newcommand{\bighdopabcd}{\bighdop^{(a,b,c,d)}}
\newcommand{\genjact}{\tilde{\genjac}}
\newcommand{\genjactw}{w_{\genjact}}
\newcommand{\normgenjact}{\omega_{\genjact}}
\newtheorem{proposition}{Proposition}
\newtheorem{lemma}{Lemma} 
\newtheorem{theorem}{Theorem} 
\newtheorem{definition}{Definition}
\newtheorem{condition}{Condition}
\newtheorem{corollary}{Corollary}
\def\condref#1{Condition~\ref{cond:#1}}
\def\addtab#1={#1\;&=}
  \def\leqaddtab#1\leq{#1\;&\leq}
\def\pr(#1){\left({#1}\right)}
\def\br[#1]{\left[{#1}\right]}
\def\fbr[#1]{\!\left[{#1}\right]}
\def\ip<#1>{\left\langle{#1}\right\rangle}
\def\iip<#1>{\left\langle\!\langle{#1}\right\rangle\!\rangle}
\def\norm#1{\left\| #1 \right\|}
\def\fpr(#1){\!\pr({#1})}
\def\ceil#1{\left\lceil#1\right\rceil}
\def\mapengine#1,#2.{\mapfunction{#1}\ifx\void#2\else\mapengine #2.\fi }
\def\map[#1]{\mapengine #1,\void.}
\def\mapenginesep_#1#2,#3.{\mapfunction{#2}\ifx\void#3\else#1\mapengine #3.\fi }
\def\mapsep_#1[#2]{\mapenginesep_{#1}#2,\void.}
\def\vcbr[#1]{\pr(#1)}
\def\bvect[#1,#2]{
{
\def\dots{\cdots}
\def\mapfunction##1{\ | \  ##1}
	\sopmatrix{
		 \,#1\map[#2]\,
	}
}
}
\def\vect[#1]{
{\def\dots{\ldots}
	\vcbr[{#1}]
}}
\def\vectt[#1]{
{\def\dots{\ldots}
	\vect[{#1}]^{\top}
}}
\def\Vectt[#1]{
{
\def\mapfunction##1{##1 \cr} 
\def\dots{\vdots}
	\begin{pmatrix}
		\map[#1]
	\end{pmatrix}
}}
\def\R{{\mathbb R}}
\def\D{{\rm d}}
\def\tF_#1{{\tt F}_{#1}}
\def\tFC_#1{{\tt T}_{#1}}
\def\secref#1{Section~\ref{Section:#1}}
\def\appref#1{Appendix~\ref{Appendix:#1}}
\def\qqand{\qquad\hbox{and}\qquad}
\def\qfor{\quad\hbox{for}\quad}
\def\elllRpz_#1{\ell_{#1{\rm z}}^{(\lambda,R),p}}
\def\sopmatrix#1{\begin{pmatrix}#1\end{pmatrix}}
\def\Problem#1#2\par{\begin{problem}\label{pb:#1} #2\end{problem}}
\def\Theorem#1#2\par{\begin{theorem}\label{th:#1} #2\end{theorem}}
\def\Conjecture#1#2\par{\begin{conjecture}\label{conj:#1} #2\end{conjecture}}
\def\Proposition#1#2\par{\begin{proposition}\label{prop:#1} #2\end{proposition}}
\def\Definition#1#2\par{\begin{definition}\label{def:#1} #2\end{definition}}
\def\Corollary#1#2\par{\begin{corollary}\label{cr:#1} #2\end{corollary}}
\def\Lemma#1#2\par{\begin{lemma}\label{lm:#1} #2\end{lemma}}
\def\Example#1#2\par{\begin{example}\label{ex:#1} #2\end{example}}
\def\Remark #1\par{\begin{remark*}#1\end{remark*}}
\def\Proof{\begin{proof}}
\def\mqed{\end{proof}}
\def\Figuretwow[#1,#2]#3#4\par{
\begin{figure}[tb]
\begin{center}{
\includegraphics[width=#3]{Figures/#1}\includegraphics[width=#3]{Figures/#2}}
\end{center}
\caption{#4}\label{fig:#1} 
\end{figure}
}
\title{Sparse spectral and $p$-finite element methods for partial differential equations on disk slices and trapeziums}
\author{Ben Snowball, Sheehan Olver}
\begin{document}

\maketitle

\begin{abstract}
Sparse spectral methods for solving partial differential equations have been derived in recent years using hierarchies of classical orthogonal polynomials on intervals, disks, and triangles. In this work we extend this methodology to a hierarchy of non-classical orthogonal polynomials on disk slices and trapeziums. This builds on the observation that sparsity is guaranteed due to the boundary being defined by an algebraic curve, and that the entries of partial differential operators can be determined using formulae in terms of (non-classical) univariate orthogonal polynomials. We apply the framework to solving the Poisson, variable coefficient Helmholtz, and Biharmonic equations. In this paper we focus on constant Dirichlet boundary conditions, as well as zero Dirichlet and Neumann boundary conditions, with other types of boundary conditions requiring future work.
 \end{abstract}

\section{Introduction}

This paper develops sparse spectral methods for solving linear partial differential equations on a special class of geometries that includes disk slices and trapeziums.  
More precisely, we consider the solution of partial differential equations on the domain
\begin{align*}
	\Omega := \{(x,y) \in \R^2 \quad | \quad \alpha < x < \beta, \: \gamma \rho(x) < y < \delta \rho(x)\}
\end{align*}
where  either of the following conditions hold:

\begin{condition}\label{cond:trap}
 \(\rho\) is a degree 1 polynomial.
 \end{condition}
 
 \begin{condition}\label{cond:disk}
\(\rho\) is the square root of a non-negative degree \(\le\) 2 polynomial, \(-\gamma = \delta > 0\).
\end{condition}

For simplicity of presentation we focus on the disk-slice, where $\rho(x) = \sqrt{1-x^2}$, $(\alpha,\beta) \subset (0, 1)$, and $(\gamma, \delta)  = (-1,1)$, and discuss an extension to other geometries in the appendix (including the half-disk and trapeziums). 

We show that partial differential equations become sparse linear systems when viewed as acting on expansions involving a family of orthogonal polynomials (OPs) that  generalise Jacobi polynomials, mirroring the ultraspherical spectral method for ordinary differential equations \cite{olver2013fast} and its analogue on the disk \cite{vasil2016tensor} and triangle \cite{olver2018recurrence,olver2019triangle}.  On the disk-slice the family of weights we consider are of the form
$$
\Wabc(x,y) = (\beta - x)^a \: (x - \alpha)^b \: (1-x^2-y^2)^c, \qfor \alpha \leq x \leq \beta,\quad -\rho(x) \leq y \leq \rho(x).
$$
The corresponding OPs denoted $\hdopnkabc(x,y)$, where $n$ denotes the polynomial degree, and $0 \le k \le n$. We define these to be orthogonalised lexicographically, that is,
$$
\hdopnkabc(x,y) = C_{n,k} x^{n-k} y^k + (\hbox{lower order terms})
$$
where $C_{n,k} \neq 0$ and ``lower order terms''   includes degree $n$ polynomials of the form $x^{n-j} y^j$ where $j < k$. The precise normalization arises from their definition in terms of one-dimensional OPs in Definition~\ref{def:OPconstruction}. 

Sparsity comes from expanding the domain and range of an operator using different choices of the parameters $a$, $b$ and $c$. Whereas the sparsity pattern and entries derived in \cite{olver2018recurrence,olver2019triangle} for equations on the triangle and \cite{vasil2016tensor} for equations on the disk results from manipulations of Jacobi polynomials, in the present work we use a more general integration-by-parts argument to deduce the sparsity structure, alongside careful use of the Christoffel--Darboux formula \cite[18.2.2]{DLMF} and quadrature rules to determine the entries. In particular, by exploiting the connection with one-dimensional orthogonal polynomials we can construct discretizations of general partial differential operators of size $p(p-1)/2 \times p(p-1)/2$ in $O(p^3)$ operations, where $p$ is the total polynomial degree. This compares favourably to $O(p^6)$ operations if one proceeds na\"ively. Furthermore, we use this framework to derive sparse $p$-finite element methods that are analogous to those of Beuchler and Sch\"oberl on tetrahedra \cite{beuchler2006new}, see also work by Li and Shen \cite{li2010optimal}.


Here is an overview of the paper:  

\noindent \secref{OPs}: We present our procedure to gain a (two-parameter) family of 2D orthogonal polynomials (OPs) on the disk-slice domain, by combining 1D OPs on the interval, to form 2D OPs on the disk. 

\noindent\secref{PDOs}: We demonstrate that these families will lead to sparse operators, including Jacobi operators representing multiplication by $x$ and $y$, and partial differential operators. We present a method involving use of the Christoffel--Darboux formula \cite[18.2.2]{DLMF} to obtain the recurrence coefficients for the non-classical 1D OPs, allowing us to exactly use the 1D quadrature rules we present to calculate the non-zero entries to the sparse operators.

\noindent\secref{Computation}: We discuss computational issues, in particular, how to realise the results of the preceding sections in practice. We present a method for explicitly deriving the recurrence coefficients of the non-classical 1D OPs we detail in \secref{OPs}. We derive a quadrature rule on the disk-slice that can be used to expand a function in the OP basis up to a given order. Further, we implement function evaluation using the coefficients of the expansion of a given function using the Clenshaw algorithm.

\noindent\secref{Examples}: We demonstrate the proposed technique for solving Poisson, Helmholtz, and Biharmonic equations on the disk-slice.  

\noindent\appref{PFEM}: We use the procedure to construct sparse $p$-finite element methods. This lays the groundwork for a future $hp$-finite element method in a disk, where the elements capture the circular geometry precisely.

\noindent\appref{HalfDisk}: We discuss extension to the special case of end-disk-slices (e.g., half disks).

\noindent\appref{Trapezium}: We discuss extension to trapezia.

\section{Orthogonal polynomials on the disk-slice and the trapezium}\label{Section:OPs}

In this section we outline the construction and some basic properties of $\hdopnkabcd(x,y)$. The symmetry in the weight allows us to express the polynomials in terms of 1D OPs, and deduce certain properties such as recurrence relationships. 

\subsection{Explicit construction}

We can construct 2D orthogonal polynomials on $\Omega$ from 1D orthogonal polynomials on the intervals \([\alpha,\beta]\) and \([\gamma,\delta]\). 

\begin{proposition}[{\cite[p55--56]{dunkl2014orthogonal}}]\label{prop:construction}
Let \(w_1 : (\alpha,\beta) \: \to \R\), \(w_2 : (\gamma,\delta) \: \to \R\) be weight functions with \(\alpha,\beta,\gamma,\delta \in \R\), and let \(\rho \: : \: (\alpha,\beta) \: \to (0,\infty)\) be such that either \condref{trap} or \condref{disk} with \(w_2\) being an even function hold.
$\forall$, $n = 0,1,2,\dots, $ let $\{p_{n,k}\}$ be polynomials orthogonal with respect to the weight $\rho(x)^{2k+1} w_1(x)$ where $0 \le k \le n$, and $\{q_{n}\}$ be polynomials orthogonal with respect to the weight $w_2(x)$. Then the 2D polynomials defined on $\Omega$
$$
\hdopnk(x,y) := p_{n-k,k}(x) \: \rho(x)^k \: q_k\fpr(\frac{y}{\rho(x)}) \qquad\hbox{for} \qquad 0 \le k \le n, \: n = 0,1,2,\dots
$$
are orthogonal polynomials with respect to the weight \(W(x,y) := w_1(x) \: w_2\fpr(\frac{y}{\rho(x)}) \) on $\Omega$. 
\end{proposition}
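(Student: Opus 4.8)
The plan is to verify directly that the proposed functions $\hdopnk$ are (i) genuine polynomials of degree $n$ in $(x,y)$, and (ii) mutually orthogonal with respect to $W$ on $\Omega$, by reducing the 2D integral to a product of two 1D integrals via the substitution $y = \rho(x)\,t$.

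First I would address the polynomiality. Under \condref{trap}, $\rho$ is a degree-1 polynomial, so $\rho(x)^k q_k(y/\rho(x))$ is $\rho(x)^k$ times a polynomial of degree $k$ in $y/\rho(x)$; expanding, each term $\rho(x)^{k-j} y^j$ is a polynomial in $x$ and $y$, and multiplying by $p_{n-k,k}(x)$ of degree $n-k$ gives a polynomial of total degree $\le n$, with the leading behaviour $x^{n-k}y^k$ coming from the top term. Under \condref{disk}, $\rho(x)^2$ is a polynomial of degree $\le 2$; here one uses that $q_k$ has a definite parity (being orthogonal with respect to an even weight $w_2$ on a symmetric interval), so $q_k(s) = s^{k}\cdot(\text{poly in }s^2) + \cdots$ has only terms $s^{k-2i}$, and thus $\rho(x)^k q_k(y/\rho(x))$ is a sum of terms $\rho(x)^{2i}\,y^{k-2i}$, each a genuine polynomial in $(x,y)$; again multiplication by $p_{n-k,k}(x)$ keeps total degree $\le n$. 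This parity observation is exactly why \condref{disk} requires $w_2$ even and $-\gamma=\delta$.

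Next, orthogonality. Given $(n,k)$ and $(m,j)$, I would write
\begin{align*}
\int_\Omega \hdopnk \, \hdopmj \, W \, \D x\, \D y
&= \int_\alpha^\beta \int_{\gamma\rho(x)}^{\delta\rho(x)} p_{n-k,k}(x) p_{m-j,j}(x) \rho(x)^{k+j} q_k\!\pr(\tfrac{y}{\rho(x)}) q_j\!\pr(\tfrac{y}{\rho(x)}) w_1(x)\, w_2\!\pr(\tfrac{y}{\rho(x)}) \, \D y \, \D x,
\end{align*}
and substitute $y = \rho(x) t$, $\D y = \rho(x)\,\D t$, so the inner integral becomes $\rho(x)^{k+j+1}\pr(\int_\gamma^\delta q_k(t) q_j(t) w_2(t)\, \D t)$. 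If $k \neq j$ this inner factor vanishes by orthogonality of the $q$'s, so the whole integral is zero. If $k = j$, the remaining integral is $\pr(\int_\gamma^\delta q_k^2 w_2) \cdot \int_\alpha^\beta p_{n-k,k}(x) p_{m-k,k}(x)\, \rho(x)^{2k+1} w_1(x)\, \D x$, which vanishes when $n \neq m$ by orthogonality of $\{p_{\cdot,k}\}$ with respect to the weight $\rho(x)^{2k+1}w_1(x)$. Combined with the degree count from the first step — the $\hdopnk$ for $0\le k\le n$ span the degree-$n$ polynomials modulo lower degree — this establishes that they form an orthogonal polynomial system for $W$ on $\Omega$.

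I expect the main obstacle to be the polynomiality step under \condref{disk}, specifically making the parity argument airtight: one must confirm that orthogonal polynomials for an even weight on a symmetric interval genuinely alternate parity with degree, and then track that $\rho(x)^k q_k(y/\rho(x))$ has no half-integer powers of $\rho(x)^2$ surviving. The orthogonality computation itself is routine once the change of variables is set up, the only point of care being that the substitution $y=\rho(x)t$ is valid for each fixed $x$ (it is, since $\rho(x)>0$ on $(\alpha,\beta)$) and that the limits transform to the fixed interval $(\gamma,\delta)$.
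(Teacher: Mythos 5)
Your proof is correct and follows essentially the same route the paper takes when it verifies orthogonality of its specialised family: the substitution $t = y/\rho(x)$ factorises the 2D inner product into the product of the $q$-orthogonality integral over $(\gamma,\delta)$ and the $p$-orthogonality integral against the weight $\rho(x)^{2k+1}w_1(x)$. Your additional care over polynomiality (the parity of $q_k$ under \condref{disk} ensuring only integer powers of $\rho^2$ survive) is a point the paper defers to the cited reference, but your treatment of it is sound.
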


For disk slices and trapeziums, we specialise Proposition \ref{prop:construction} in the following definition. First we introduce notation for two families of univariate OPs.
\begin{definition}\label{def:OPconstruction}
Let $\genjacw^{(a,b,c)}(x)$ and $\jacw^{(a,b)}(x)$ be two weight functions on the intervals $(\alpha, \beta)$ and $(\gamma, \delta)$ respectively, given by:
\begin{align*}
\begin{cases}
\genjacw^{(a,b,c)}(x) &:= (\beta - x)^a \: (x - \alpha)^{b} \: \rho(x)^{c} \\
\jacw^{(a,b)}(x) &:= (\delta-x)^a \: (x - \gamma)^b
\end{cases}
\end{align*}
and define the associated inner products by:
\begin{align}
	\ip< p, \: q >_{\genjacw^{(a,b,c)}} &:= \frac{1}{\normgenjac^{(a,b,c)}} \: \int_\alpha^\beta p(x) \: q(x) \: \genjacw^{(a,b,c)}(x) \: \D x \label{eqn:ipgenjac} \\
	\ip<p, q>_{\jacw^{(a,b)}} &:= \frac{1}{\normjac^{(a,b)}} \: \int_\gamma^\delta p(y) \: q(y) \: \jacw^{(a,b)}(y)\: \D y \label{eqn:ipjac}
\end{align}
where
\begin{align}
	\normgenjac^{(a,b,c)} := \int_\alpha^\beta \: \genjacw^{(a,b,c)}(x) \: \D x, \quad \normjac^{(a,b)} := \int_\gamma^\delta \: \jacw^{(a,b)}(y) \: \D y. \label{eqn:ipnormalisations}
\end{align}
Denote the three-parameter family of orthonormal polynomials on $[\alpha,\beta]$ by $\{\genjac_n^{(a,b,c)}\}$, orthonormal with respect to the inner product defined in (\ref{eqn:ipgenjac}), and the two-parameter family of orthonormal polynomials on $[\gamma,\delta]$ by $\{\jac_n^{(a,b)}\}$, orthonormal with respect to the inner product defined in (\ref{eqn:ipjac}).
\end{definition}

\begin{definition}\label{def:constuction}
Define the four-parameter 2D orthogonal polynomials via:
\begin{align*}
	\hdopnk^{(a,b,c,d)}(x,y) := \genjacnmk^{(a, b, c+d+2k+1)}(x) \: \rho(x)^k \: \jac_k^{(d,c)}\fpr(\frac{y}{\rho(x)}), \quad (x,y) \in \Omega, 
\end{align*}
\end{definition}

$\{\hdopnk^{(a,b,c,d)}\}$ are orthogonal with respect to the weight
\begin{align*}
	W^{(a,b,c,d)}(x,y) := \genjacw^{(a,b,c+d)}(x) \: \jacw^{(d,c)}\fpr(\frac{y}{\rho(x)}), \quad (x,y) \in \Omega,
\end{align*}
assuming that either \condref{trap} or \condref{disk} with  \(\jacw^{(a,b)}\) being an even function (i.e. $a = b$, and we can hence denote the weight as $\jacw^{(a)}(x) = w^{(a,a)}_P(x) =  (\delta-x^2)^a$) hold.
That is,
\begin{align*}
	\ip< \hdopnk^{(a,b,c,d)}, \: \hdopmj^{(a,b,c,d)} >_{W^{(a,b,c,d)}} &= \normgenjac^{(a,b,c+d+2k+1)} \: \normjac^{(d,c)} \: \delta_{n,m} \: \delta_{k,j},
\end{align*}
where for $f, g : \Omega \to \R$ the inner product is defined as 
\begin{align*}
	\ip< f, \: g >_{W^{(a,b,c,d)}} := \iint_\Omega \: f(x,y) \: g(x,y) \: W^{(a,b,c,d)}(x,y) \: \D y \: \D x.
\end{align*}

We can see that they are indeed orthogonal using the change of variable $t = \frac{y}{\rho(x)}$, for the following normalisation:
\begin{align}
	&\ip< \hdopnkabcd, \: \hdopmjabcd >_\Wabcd \\
	&\quad \quad \quad =  \iint_\Omega \: \Big[ \genjacnmk^{(a,b,c+d+2k+1)}(x) \: \genjacmmj^{(a,b,c+d+2j+1)}(x) \: \rho(x)^{k+j}  \nonumber \\
		&\quad \quad \quad \quad \quad \quad \cdot \: \jac_k^{(d,c)}\fpr(\frac{y}{\rho(x)}) \: \jac_j^{(d,c)}\fpr(\frac{y}{\rho(x)})  \: \Wabcd(x,y) \Big] \: \D y \: \D x \nonumber \\
	&\quad \quad \quad = \Big( \int_\alpha^\beta \: {\genjacnmk^{(a,b,c+d+2k+1)}}(x) \: {\genjacmmj^{(a,b,c+d+2j+1)}}(x) \: \genjacw^{(a,b,c+d+k+j+1)}(x) \: \D x \Big) \nonumber \\
	&\quad \quad \quad \quad \quad \quad \cdot \: \Big( \int_\gamma^\delta \: {\jac_k^{(d,c)}}(t) \:  {\jac_j^{(d,c)}}(t) \: \jacw^{(d,)}(t) \: \D t \Big) \nonumber \\
	&\quad \quad \quad = \normjac^{(d,c)} \: \delta_{k,j} \: \int_\alpha^\beta \: {\genjacnmk^{(a,b,c+d+2k+1)}}(x) \: {\genjac_{m-k}^{(a,b,c+d+2k+1)}}(x) \: \genjacw^{(a,b,c+d+2k+1)}(x) \: \D x \nonumber \\ 
	&\quad \quad \quad = \normgenjac^{(a,b,c+d+2k+1)} \: \normjac^{(d,c)} \: \delta_{n,m} \: \delta_{k,j}. \label{eqn:normhdop}
\end{align}

For the disk-slice, the weight $\Wabc(x,y) = (\beta - x)^a \: (x - \alpha)^b \: (1-x^2-y^2)^c$ results from setting:
\begin{align*}
\begin{cases}
	(\alpha,\beta) &\subset (0,1) \\
	(\gamma,\delta) &:= (-1,1) \\
	\rho(x) &:= (1-x^2)^{\half}
\end{cases}
\end{align*}
so that
\begin{align*}
\begin{cases}
	\genjacw^{(a,b,c)}(x) :=  (\beta - x)^a \: (x - \alpha)^b \: \rho(x)^{c} \\
	\jacw^{(c)}(x) := (1-x)^c \: (1+x)^c = (1-x^2)^c.
\end{cases}
\end{align*}
Note here we can simply remove the need for including a fourth parameter $d$. The 2D OPs orthogonal with respect to the weight above on the disk-slice $\Omega$ are then given by:
\begin{align}\label{eq:diskpolys}
	\hdopnkabc(x,y) := \genjacnmk^{(a, b, 2c+2k+1)}(x) \: \rho(x)^k \: \jac_k^{(c,c)}\fpr(\frac{y}{\rho(x)}), \quad (x,y) \in \Omega
\end{align}
In this case the weight $\jacw(x)$ is an ultraspherical weight, and the corresponding OPs are the normalized Jacobi polynomials $\{\jac_n^{(b, b)}\}$, while the weight $w_R(x)$ is non-classical (it is in fact semi-classical, and is equivalent to a generalized Jacobi weight \cite[\S5]{magnus1995painleve}).


\subsection{Jacobi matrices}

We can express the three-term recurrences associated with $\genjac_n^{(a,b,c)}$ and $\jac_n^{(d,c)}$ as 
\begin{align}
	x \genjac_n^{(a,b,c)}(x) &= \beta_n^{(a,b,c)} \genjac_{n+1}^{(a,b,c)}(x) + \alpha_n^{(a,b,c)} \genjac_n^{(a,b,c)}(x) + \beta_{n-1}^{(a,b,c)} \genjac_{n-1}^{(a,b,c)}(x) \label{eqn:Hrecurrence} \\
	y \jac_n^{(d,c)}(y) &= \delta_n^{(d,c)} \jac_{n+1}^{(d,c)}(y) + \gamma_{n}^{(d,c)} \jac_{n}^{(d,c)}(y) + \delta_{n-1}^{(d,c)} \jac_{n-1}^{(d,c)}(y). \label{eqn:Precurrence}
\end{align}
Of course, for the disk-slice case, we have that $c = d$ and $\gamma_{n}^{(c,c)} = 0 \quad \forall n=0,1,2,\dots$. We can use (\ref{eqn:Hrecurrence}) and (\ref{eqn:Precurrence}) to determine the 2D recurrences for $\hdopnkabcd(x,y)$. Importantly, we can deduce sparsity in the recurrence relationships:

\begin{lemma}
$\hdopnkabcd(x,y)$ satisfy the following 3-term recurrences:
\begin{align*}
x \hdopnkabcd(x,y) &= \alphaabcd_{n,k,1} \: \hdop_{n-1, k}^{(a,b,c,d)}(x, y) + \alphaabcd_{n,k,2} \: \hdop_{n, k}^{(a,b,c,d)}(x, y) + \alphaabcd_{n+1,k,1} \: \hdop_{n+1, k}^{(a,b,c)}(x, y), \\
y \hdopnkabcd(x,y) &= \betaabcd_{n,k,1} \: \hdop_{n-1, k-1}^{(a,b,c,d)}(x, y) + \betaabcd_{n,k,2} \: \hdop_{n-1, k}^{(a,b,c,d)}(x, y) + \betaabcd_{n,k,3} \: \hdop_{n-1, k+1}^{(a,b,c,d)}(x, y) \nonumber \\
		& \quad \quad + \betaabcd_{n,k,4} \: \hdop_{n, k-1}^{(a,b,c,d)}(x, y) + \betaabcd_{n,k,5} \: \hdop_{n, k}^{(a,b,c,d)}(x, y) + \betaabcd_{n,k,6} \: \hdop_{n, k+1}^{(a,b,c,d)}(x, y) \nonumber \\
		& \quad \quad + \betaabcd_{n,k,7} \: \hdop_{n+1, k-1}^{(a,b,c,d)}(x, y) + \betaabcd_{n,k,8} \: \hdop_{n+1, k}^{(a,b,c,d)}(x, y) + \betaabcd_{n,k,9} \: \hdop_{n+1, k+1}^{(a,b,c,d)}(x, y),
\end{align*}
for \((x,y) \in \Omega\), where
\begin{align*}
	\alphaabcd_{n,k,1} &:= \beta_{n-k-1}^{(a, b+k+\half)}, \qquad \alphaabcd_{n,k,2} := \alpha_{n-k}^{(a, b+k+\half)} \\
	\betaabcd_{n,k,1} &:= \delta_{k-1}^{(d,c)} \: \ip<\genjacnmk^{(a, b, c+d+2k+1)}, \: \genjacnmk^{(a, b, c+d+2k-1)}>_{\genjacw^{(a, b, c+d+2k+1)}} \\
	\betaabcd_{n,k,2} &:= \gamma_k^{(d,c)} \: \ip<\genjacnmk^{(a, b, c+d+2k+1)}, \: \rho(x) \genjac_{n-k-1}^{(a, b, c+d+2k+1)}>_{\genjacw^{(a, b, c+d+2k+1)}} \\
	\betaabcd_{n,k,3} &:= \delta_{k}^{(d,c)} \: \ip<\genjacnmk^{(a, b, c+d+2k+1)}, \genjac_{n-k-2}^{(a, b, c+d+2k+3)}>_{\genjacw^{(a, b, c+d+2k+3)}} \\
	\betaabcd_{n,k,4} &:= \delta_{k-1}^{(d,c)} \: \ip<\genjacnmk^{(a, b, c+d+2k+1)}, \: \genjac_{n-k+1}^{(a, b, c+d+2k-1)}>_{\genjacw^{(a, b, c+d+2k+1)}} \\
	\betaabcd_{n,k,5} &:= \gamma_k^{(d,c)} \: \ip<\genjacnmk^{(a, b, c+d+2k+1)}, \: \rho(x) \genjac_{n-k}^{(a, b, c+d+2k+1)}>_{\genjacw^{(a, b, c+d+2k+1)}} \\
	\betaabcd_{n,k,6} &:= \delta_{k}^{(d,c)} \: \ip<\genjacnmk^{(a, b, c+d+2k+1)}, \genjac_{n-k-1}^{(a, b, c+d+2k+3)}>_{\genjacw^{(a, b, c+d+2k+3)}} \\
	\betaabcd_{n,k,7} &:= \delta_{k-1}^{(d,c)} \: \ip<\genjacnmk^{(a, b, c+d+2k+1)}, \: \genjac_{n-k+2}^{(a, b, c+d+2k-1)}>_{\genjacw^{(a, b, c+d+2k+1)}} \\
	\betaabcd_{n,k,8} &:= \gamma_k^{(d,c)} \: \ip<\genjacnmk^{(a, b, c+d+2k+1)}, \: \rho(x) \genjac_{n-k+1}^{(a, b, c+d+2k+1)}>_{\genjacw^{(a, b, c+d+2k+1)}} \\
	\betaabcd_{n,k,9} &:= \delta_{k}^{(d,c)} \: \ip<\genjacnmk^{(a, b, c+d+2k+1)}, \genjacnmk^{(a, b, c+d+2k+3)}>_{\genjacw^{(a, b, c+d+2k+3)}}. 
\end{align*}

\end{lemma}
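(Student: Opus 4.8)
\emph{Proof proposal.}
The plan is to prove the two recurrences separately, working throughout from the factored form
\[
\hdopnkabcd(x,y)=\genjacnmk^{(a,b,c+d+2k+1)}(x)\,\rho(x)^{k}\,\jac_k^{(d,c)}\fpr(\frac{y}{\rho(x)})
\]
of Definition~\ref{def:constuction}. Multiplication by $x$ touches only the ``radial'' univariate factor, so the $x$-recurrence will fall out of (\ref{eqn:Hrecurrence}) directly; multiplication by $y$ mixes the two factors, and the substance of the argument is a tridiagonality-in-degree property of the connection coefficients between the univariate families $\{\genjac_n^{(a,b,c')}\}$ whose weights differ by a power of $\rho^{2}$.

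\textbf{The $x$-recurrence.} I would multiply the factored form by $x$ and apply the one-dimensional three-term recurrence (\ref{eqn:Hrecurrence}) to $x\,\genjacnmk^{(a,b,c+d+2k+1)}(x)$ (with first index $n-k$). Since the factor $\rho(x)^{k}\,\jac_k^{(d,c)}(y/\rho(x))$ is untouched, and since raising or lowering the degree of the $\genjac$-factor by one (all other data fixed) is exactly the map $n\mapsto n\pm1$ at fixed $k$, the three terms produced are $\hdop_{n-1,k}^{(a,b,c,d)}$, $\hdop_{n,k}^{(a,b,c,d)}$, $\hdop_{n+1,k}^{(a,b,c,d)}$, and the coefficients $\alphaabcd_{n,k,1},\alphaabcd_{n,k,2},\alphaabcd_{n+1,k,1}$ are read off directly from the recurrence coefficients in (\ref{eqn:Hrecurrence}).

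\textbf{The $y$-recurrence.} Writing $y=\rho(x)\,t$ with $t=y/\rho(x)$ gives
\[
y\,\hdopnkabcd(x,y)=\genjacnmk^{(a,b,c+d+2k+1)}(x)\,\rho(x)^{k+1}\,[\,t\,\jac_k^{(d,c)}(t)\,],
\]
and (\ref{eqn:Precurrence}) expands $t\,\jac_k^{(d,c)}(t)$ into $\jac_{k+1}^{(d,c)}$, $\jac_k^{(d,c)}$, $\jac_{k-1}^{(d,c)}$ with coefficients $\delta_k^{(d,c)}$, $\gamma_k^{(d,c)}$, $\delta_{k-1}^{(d,c)}$. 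Each resulting term carries the prefactor $\genjacnmk^{(a,b,c+d+2k+1)}(x)\,\rho(x)^{k+1}$, which must be re-expressed in terms of the normalised blocks $\hdop_{\cdot,k+1}^{(a,b,c,d)}$, $\hdop_{\cdot,k}^{(a,b,c,d)}$, $\hdop_{\cdot,k-1}^{(a,b,c,d)}$. For the $\jac_{k+1}$ term the surplus $\rho^{2}$ is absorbed into the weight by expanding $\genjacnmk^{(a,b,c+d+2k+1)}$ in the $\genjacw^{(a,b,c+d+2k+3)}$-orthonormal basis; for the $\jac_{k-1}$ term I would use that $\rho^{2}$ is a polynomial of degree at most two under \condref{trap} or \condref{disk} and expand $\rho^{2}\,\genjacnmk^{(a,b,c+d+2k+1)}$ in the $\genjacw^{(a,b,c+d+2k-1)}$-orthonormal basis; for the $\jac_k$ term, under \condref{disk} the coefficient $\gamma_k^{(d,c)}$ vanishes (the weight $\jacw^{(d,c)}$ is even) so the term is absent, while under \condref{trap} $\rho$ is a degree-one polynomial and one expands $\rho\,\genjacnmk^{(a,b,c+d+2k+1)}$ in the $\genjacw^{(a,b,c+d+2k+1)}$-orthonormal basis. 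Translating the degree index of each $\genjac$-block back into an $n$-label via Definition~\ref{def:constuction} then yields the nine terms with $n'\in\{n-1,n,n+1\}$ and $k'\in\{k-1,k,k+1\}$, and the nine coefficients $\betaabcd_{n,k,1},\dots,\betaabcd_{n,k,9}$ come out as the products of $\delta_k^{(d,c)}$, $\gamma_k^{(d,c)}$, $\delta_{k-1}^{(d,c)}$ with the relevant connection coefficients, written as the inner products appearing in the statement.

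\textbf{Main obstacle.} The step that needs genuine care is showing that each of the three expansions above has only \emph{three} nonzero terms, i.e.\ that the connection and multiplication operators are tridiagonal in degree; everything else is bookkeeping. This is orthogonality plus a degree count: since $\genjacw^{(a,b,c+2)}=\rho^{2}\,\genjacw^{(a,b,c)}$ with $\rho^{2}$ of degree at most two, a cross inner product between $\genjacnmk^{(a,b,c+d+2k+1)}$ and some $\genjac_j$ of a neighbouring $\rho$-exponent reduces to an integral of $\genjacnmk^{(a,b,c+d+2k+1)}$ against a polynomial of degree at most $j+2$ in the inner product in which $\genjacnmk^{(a,b,c+d+2k+1)}$ is orthogonal to all lower degrees, and likewise against a polynomial of degree $j$ after multiplying by the degree-one $\rho$; both vanish unless $|j-(n-k)|\le2$ (respectively $\le1$). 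Once this is established, keeping the $\genjac$-weight parameters, the powers of $\rho$, and the $(n,k)$-labels aligned through the three cases delivers the stated coefficient formulas.
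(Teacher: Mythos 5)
Your proposal is correct and rests on the same ingredients as the paper's proof: the tensor-product structure of $\hdopnkabcd$, the univariate three-term recurrences, and orthogonality-plus-degree-counting to show the relevant univariate connection/multiplication coefficients are banded (with the same case split: $\gamma_k^{(d,c)}=0$ under \condref{disk}, $\rho$ of degree one under \condref{trap}). The only organizational difference is that you apply the recurrence in $t=y/\rho(x)$ first and then expand the $x$-prefactors, whereas the paper expands $y\,\hdopnkabcd$ directly in the 2D basis and factors each coefficient $c_{m,j}$ into a product of 1D integrals — the same computation in a different order, with the upper block-bandwidth obtained there for free from $\deg(y\,\hdopnkabcd)=n+1$ rather than from your two-sided degree argument.
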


\begin{proof}
The 3-term recurrence for multiplication by $x$ follows from equation (\ref{eqn:Hrecurrence}). For the recurrence for multiplication by $y$, since $\{\hdopmjabcd\}$ for $m = 0,\dots,n+1$, $j = 0,\dots,m$ is an orthogonal basis for any degree $n+1$ polynomial, we can expand $y \: \hdopnkabcd(x,y) = \sum_{m=0}^{n+1} \sum_{j=0}^m c_{m,j} \: \hdopmjabcd(x,y)$. These coefficients are given by
\begin{align*}
	c_{m,j} = {\ip< y \: \hdopnkabcd, \hdopmjabcd >_{\Wabcd}}{\norm{\hdopmjabcd}^{-2}_{\Wabcd}}.
\end{align*}
Recall from equation (\ref{eqn:normhdop}) that $\norm{\hdopmjabcd}_{\Wabcd}^2 = \normgenjac^{(a,b,c+d+2j+1)} \: \normjac^{(d,c)}$. Then for $m = 0,\dots,n+1$, $j = 0,\dots,m$, using the change of variable $t = \frac{y}{\rho(x)}$:
\begin{align*}
	&\ip<y \hdopnkabcd, \hdopmjabcd>_\Wabcd \\
	&\quad \quad =  \iint_\Omega \hdopnkabcd(x,y) \: \hdopmjabcd(x,y) \: y \: \Wabcd(x,y) \: dy \: dx \\
	&\quad \quad = \Big( \int^\beta_\alpha \genjacnmk^{(a, b, c+d+2k+1)}(x) \: \genjacmmj^{(a, b, c+d+2j+1)}(x) \: \rho(x)^{k+j+2} \: \genjacw^{(a, b,c+d)}(x) \: \D x \Big) \nonumber \\
	&\quad \quad \quad \quad \quad\cdot \: \Big( \int^\delta_{\gamma} \jac_k^{(d,c)}(t) \: \jac_j^{(d,c)}(t) \: t \: \jacw^{(d,c)}(t) \: \D t \Big) \\
	&\quad \quad = \Big( \int^\beta_\alpha \genjacnmk^{(a, b, c+d+2k+1)}(x) \: \genjacmmj^{(a, b, c+d+2j+1)}(x) \: \genjacw^{(a, b,c+d+k+j+2)}(x) \: \D x \Big) \nonumber \\
	&\quad \quad \quad \quad \quad\cdot \: \Big( \int^\delta_{\gamma} \jac_k^{(d,c)}(t) \: \jac_j^{(d,c)}(t) \: t \: \jacw^{(d,c)}(t) \: \D t \Big)
\end{align*}
\begin{align*}
	=
	\begin{cases}
    		\delta_k^{(d,c)} \: \normjac^{(d,c)} \: \normgenjac^{(a, b, c+d+2k+3)} \: \ip<\genjacnmk^{(a, b, c+d+2k+1)}, \genjac_{m-k-1}^{(a, b, c+d+2k+3)}>_{\genjacw^{(a, b, c+d+2k+3)}} \quad& \text{if } j = k+1 \\
		\gamma_k^{(d,c)} \: \normjac^{(d,c)} \: \normgenjac^{(a, b, c+d+2k+1)} \: \ip<\genjacnmk^{(a, b, c+d+2k+1)}, \: \rho(x) \genjac_{m-k}^{(a, b, c+d+2k+1)}>_{\genjacw^{(a, b, c+d+2k+1)}} \quad& \text{if } j = k \\
		\delta_{k-1}^{(d,c)} \: \normjac^{(d,c)} \: \normgenjac^{(a, b, c+d+2k-1)} \: \ip<\genjacnmk^{(a, b, c+d+2k-1)}, \: \rho(x)^2 \genjac_{m-k+1}^{(a, b, c+d+2k-1)}>_{\genjacw^{(a, b, c+d+2k-1)}} \quad& \text{if } j = k-1 \\
		0 & \text{otherwise}
      	\end{cases}
\end{align*}
where, by orthogonality,
\begin{align*}
	\ip<\genjacnmk^{(a, b, c+d+2k+1)}, \genjac_{m-k-1}^{(a, b, c+d+2k+3)}>_{\genjacw^{(a, b, c+d+2k+3)}} &= 0 \qfor m < n-1, \\
	\ip<\genjacnmk^{(a, b, c+d+2k+1)}, \: \rho(x)^2 \genjac_{m-k+1}^{(a, b, c+d+2k-1)}>_{\genjacw^{(a, b, c+d+2k-1)}} &= 0 \qfor m < n-1.
\end{align*}

Finally, if \condref{trap} holds we have that 
$$
\ip<\genjacnmk^{(a, b, c+d+2k+1)}, \: \rho(x) \genjac_{m-k}^{(a, b, c+d+2k+1)}>_{\genjacw^{(a, b, c+d+2k+1)}} = 0 \qfor m < n-1.
$$
If \condref{disk} holds we have that $\gamma^{(d,c)}_{k} = \gamma^{(c,c)}_{k} \equiv 0$ for any $k$.

\end{proof}

Three-term recurrences lead to Jacobi operators that correspond to multiplication by $x$ and $y$. Define, for $n=0,1,2,\dots$: 
\begin{align*}
\bighdopabcd_n := \begin{pmatrix}
		\hdopabcd_{n,0}(x,y) \\
		\vdots \\
		\hdopabcd_{n,n}(x,y)
	\end{pmatrix} \in \R^{n+1}, 
\quad \quad 
\bighdopabcd := \begin{pmatrix}
		\bighdopabcd_0 \\
		\bighdopabcd_1 \\
		\bighdopabcd_2 \\
		\vdots \\
	\end{pmatrix}
\end{align*}
and set $J_x^{(a,b,c,d)}, J_y^{(a,b,c,d)}$ as the Jacobi matrices corresponding to
\begin{align}
J_x^{(a,b,c,d)} \: \bighdopabcd(x,y) = x \: \bighdopabcd(x,y), \quad J_y^{(a,b,c,d)} \: \bighdopabcd(x,y) = y \: \bighdopabcd(x,y).
\label{eqn:jacobimatricesdefinition}
\end{align}
The matrices $J_x^{(a,b,c,d)}, J_y^{(a,b,c,d)}$ act on the coefficients vector of a function's expansion in the $\{\hdopnkabcd\}$ basis. For example, let $a, b$ be general parameters and a function $f(x,y)$ defined on $\Omega$ be approximated by its expansion $f(x,y) = \bighdopabcd(x,y)^\top \mathbf{f}$. Then $x \: f(x,y)$ is approximated by $\bighdopabcd(x,y)^\top {J_x^{(a,b,c,d)\top}} \mathbf{f}$. In other words, ${J_x^{(a,b,c,d)\top}} \mathbf{f}$ is the coefficients vector for the expansion of the function $(x,y) \mapsto x \: f(x,y)$ in the  $\{\hdopnkabcd\}$ basis. Further, note that $J_x^{(a,b,c,d)}, J_y^{(a,b,c,d)}$ are banded-block-banded matrices:

\begin{definition}
A block matrix $A$ with blocks $A_{i,j}$ has block-bandwidths $(L,U)$ if $A_{i,j} = 0$ for $- L \leq j-i \leq U$, and sub-block-bandwidths $(\lambda, \mu)$ if all blocks $A_{i,j}$ are banded with bandwidths $(\lambda,\mu)$. A matrix where the block-bandwidths and sub-block-bandwidths are small compared to the dimensions is referred to as a banded-block-banded matrix. 
\end{definition}

For example, $J_x^{(a,b,c,d)}, J_y^{(a,b,c,d)}$ are block-tridiagonal (block-bandwidths $(1,1)$):
\begin{align*}
J_{x/y}^{(a,b,c,d)} &= \begin{pmatrix}
		B^{x/y}_0 & A^{x/y}_0 & & & & \\
		C^{x/y}_1 & B^{x/y}_1 & A^{x/y}_1 & & & \\
		& C^{x/y}_2 & B^{x/y}_2 & A^{x/y}_2  & & & \\
		& & C^{x/y}_3 & \ddots & \ddots & \\
		& & & \ddots & \ddots & \ddots \\
	\end{pmatrix}
\end{align*}
where the blocks themselves are diagonal for $J_x^{(a,b,c,d)}$ (sub-block-bandwidths $(0,0)$),
\begin{align*}
A^x_n &:= \begin{pmatrix}
		\alphaabcd_{n+1,0,1} & 0 & \hdots & 0 \\
		& \ddots & & \vdots & \\
		& & \alphaabcd_{n+1,n,1} & 0 \\
	    \end{pmatrix} \in \R^{(n+1)\times(n+2)}, \quad n = 0,1,2,\dots \\
B^x_n &:= \begin{pmatrix}
		\alphaabcd_{n,0,2} & & \\
		& \ddots & \\
		& & \alphaabcd_{n,n,2} \\
	    \end{pmatrix} \in \R^{(n+1)\times(n+1)} \quad n = 0,1,2,\dots \\
C^x_n &:= \big( A^x_n \big)^\top \in \R^{(n+1)\times n},  \quad n = 1,2,\dots \\ 
\nonumber
\end{align*}
and tridiagonal for $J_y^{(a,b,c,d)}$ (sub-block-bandwidths $(1,1)$),
\begin{align*}
A^y_n &:= \begin{pmatrix}
		\betaabcd_{n,0,8} & \betaabcd_{n,0,9} & & & \\
		\betaabcd_{n,1,7}& \ddots & \ddots & & \\
		& \ddots & \ddots & \ddots & \\
		& & \betaabcd_{n,n,7} & \betaabcd_{n,n,8} & \betaabcd_{n,n,9} \\
	    \end{pmatrix} \in \R^{(n+1)\times(n+2)}, \quad n = 0,1,2,\dots \\
B^y_n &:= \begin{pmatrix}
		\betaabcd_{n,0,5} & \betaabcd_{n,0,6} & & \\
		\betaabcd_{n,1,4} & \ddots & \ddots & \\
		& \ddots & \ddots & \betaabcd_{n,n-1,6} \\
		& & \betaabcd_{n,n,4} & \betaabcd_{n,n,5}
	    \end{pmatrix} \in \R^{(n+1)\times(n+1)}  \quad n = 0,1,2,\dots \\
C^y_n &:= \begin{pmatrix}
		\betaabcd_{n,0,2} & \betaabcd_{n,0,3} & & \\
		\betaabcd_{n,1,1} & \ddots & \ddots & \\
		& \ddots & \ddots & \betaabcd_{n,n-2,3} \\
		& & \ddots & \betaabcd_{n,n-1,2} \\
		& & & \betaabcd_{n,n,1} \\
	    \end{pmatrix} \in \R^{(n+1)\times n}, \quad n = 1,2,\dots
\end{align*}

Note that the sparsity of the Jacobi matrices (in particular the sparsity of the sub-blocks) comes from the natural sparsity of the three-term recurrences of the 1D OPs, meaning that the sparsity is not limited to the specific disk-slice case.

\subsection{Building the OPs} 

We can combine each system in (\ref{eqn:jacobimatricesdefinition}) into a block-tridiagonal system:
\begin{align*}
\renewcommand\arraystretch{1.3}
\begin{pmatrix}
		1 & & & \\
		B_0-G_0(x,y) & A_0 & & \\
		C_1 & B_1-G_1(x,y) & \quad A_1 \quad & \\
		& C_2 & B_2 - G_2(x,y)  & \ddots \\
		& & \ddots &\ddots
\end{pmatrix}
\bighdopabcd(x,y)
=
\begin{pmatrix}
	\hdopabcd_{0,0} \\ 0 \\ 0 \\ 0 \\ \vdots  \\
\end{pmatrix},
\end{align*}
where we note $\hdopabcd_{0,0}(x,y) \equiv \genjac_0^{(a,b,c+d+1)} \: \jac_0^{(d,c)}$, and for each $n = 0,1,2\dots$,
\begin{align*}
A_n &:= \begin{pmatrix}
		A^x_n \\
		A^y_n
	    \end{pmatrix} \in \R^{2(n+1)\times(n+2)}, \quad
C_n := \begin{pmatrix}
		C^x_n \\
		C^y_n
	    \end{pmatrix} \in \R^{2(n+1)\times n} \quad (n \ne 0), \nonumber \\
B_n &:= \begin{pmatrix}
		B^x_n \\
		B^y_n
	    \end{pmatrix} \in \R^{2(n+1)\times(n+1)}, \quad
G_n(x,y) := \begin{pmatrix}
		xI_{n+1} \\
		yI_{n+1}
	    \end{pmatrix} \in \R^{2(n+1)\times(n+1)}.
\end{align*}
 
For each $n = 0,1,2\dots$ let $\Dnt$ be any matrix that is a left inverse of $A_n$, i.e. such that $\Dnt A_n = I_{n+2}$. Multiplying our system by the preconditioner matrix that is given by the block diagonal matrix of the $\Dnt$'s, we obtain a lower triangular system \cite[p78]{dunkl2014orthogonal}, which can be expanded to obtain the recurrence:
\begin{align*}
\begin{cases}
\bighdopabcd_{-1}(x,y) := 0 \\
\bighdopabcd_{0}(x,y) := \hdopabcd_{0,0} \\
\bighdopabcd_{n+1}(x,y) = -\Dnt (B_n-G_n(x,y)) \bighdopabcd_n(x,y) - \Dnt C_n  \,\bighdopabcd_{n-1}(x,y), \quad n = 0,1,2,\dots.
\end{cases}
\end{align*}

Note that we can define an explicit \(\Dnt\) as follows:
\begin{align*}
\Dnt := \begin{pmatrix}
		\frac{1}{\alphaabcd_{n+1,0,1}} & &  \\
		& \ddots & & &  \\
		& & \ddots & & \\
		& & & \frac{1}{\alphaabcd_{n+1,n,1}} & \\
		\eta_{0} & 0 & \hdots & 0 & \eta_{1} & \hdots & \hdots & \eta_{n+1}
	    \end{pmatrix},
\end{align*}
 where
 \begin{align*}
	\eta_{n+1} &= \frac{1}{\betaabcd_{n,n,9}}, \\ 
	\eta_{n} &= -\frac{1}{\betaabcd_{n,n-1,9}} \big( \betaabcd_{n,n,8} \: \eta_{n+1} \big), \\
	\eta_j &= \frac{1}{\betaabcd_{n,j-1,9}} \big( \betaabcd_{n,n+j+1,7} \: \eta_{j+2} + \betaabcd_{n,n+j,8} \: \eta_{j+1} \big) \qfor j = n-1,n-2,\dots,1, \\
	\eta_0 &= \frac{1}{\alphaabcd_{n+1,0,1}} \big( \betaabcd_{n,1,7} \: \eta_{2} + \betaabcd_{n,0,8} \: \eta_{1} \big).
\end{align*}
It follows that we can apply $\Dnt$ in $O(n)$ complexity, and thereby calculate $\bighdopabcd_{0}(x,y)$  through $\bighdopabcd_{n}(x,y)$ in optimal $O(n^2)$ complexity.

For the disk-slice, $\betaabc_{n,k,2} = \betaabc_{n,k,5} = \betaabc_{n,k,8} \equiv 0$ for any $n, k$.

\section{Sparse partial differential operators}\label{Section:PDOs}

\begin{figure} 
\center
\begin{tikzpicture} 
\draw[black,solid,ultra thin] (0,0)--(0,4);
\draw[black,solid,ultra thin] (0,0)--(4,0);
\draw[black,solid,ultra thin] (4,0)--(4,4);
\draw[black,solid,ultra thin] (0,4)--(4,4);
\draw[black,fill=black] (0,0) circle (.5ex);
\draw[black,fill=black] (2,2) circle (.5ex);
\draw[black,fill=black] (4,4) circle (.5ex);
\draw[black,fill=black] (0,2) circle (.5ex);
\draw[black,fill=black] (0,4) circle (.5ex);
\draw[black,thick,->] (0.2,0.2)--(1.8,1.8);
\draw[black,thick,->] (2.2,2.2)--(3.8,3.8);
\draw[black,thick,->] (0,0.2)--(0,1.8);
\draw[black,thick,->] (0,2.2)--(0,3.8);
\draw [black,thick,->] (0.2,4.2) to [out=20,in=160] (3.8,4.2);
\draw[] (0,0) node[anchor=east] {$(0,0,0)$};
\draw[] (0,4) node[anchor=east] {$(0,0,2)$};
\draw[] (4,0) node[anchor=west] {$(2,2,0)$};
\draw[] (4,4) node[anchor=west] {$(2,2,2)$};
\draw[] (1,1) node[anchor=north] {$\tfrac{\partial}{\partial x}$};
\draw[] (3,3) node[anchor=north] {$\tfrac{\partial}{\partial x}$};
\draw[] (0,1) node[anchor=west] {$\tfrac{\partial}{\partial y}$};
\draw[] (0,3) node[anchor=west] {$\tfrac{\partial}{\partial y}$};
\draw[black,thick,->] (-2.5,3.4)--(-2.5,4);
\draw[black,thick,->] (-2.5,3.4)--(-1.9,3.4);
\draw[] (-2.3,3.2) node[anchor=west] {$(a,b)$};
\draw[] (-2.7,3.6) node[anchor=south] {$c$};
\draw[] (2,4.6) node[anchor=south] {basis conversion};
\end{tikzpicture} 
\caption{The Laplace operator acting on vectors of $\smash{\hdopnk^{(0,0,0)}}$ coefficients has a sparse matrix representation if the range is represented as vectors of $\smash{\hdopnk^{(2,2,2)}}$ coefficients. Here, the arrows indicate that the corresponding operation has a sparse matrix representation when the domain is $\smash{\hdopnkabc}$ coefficients, where $(a,b,c)$ is at the tail of the arrow, and the range is $\smash{\hdopnk^{(\tilde{a},\tilde{b},\tilde{c})}}$ coefficients, where $(\tilde{a},\tilde{b},\tilde{c})$ is at the head of the arrow. 
}
\label{fig:Laplace} 
\end{figure}
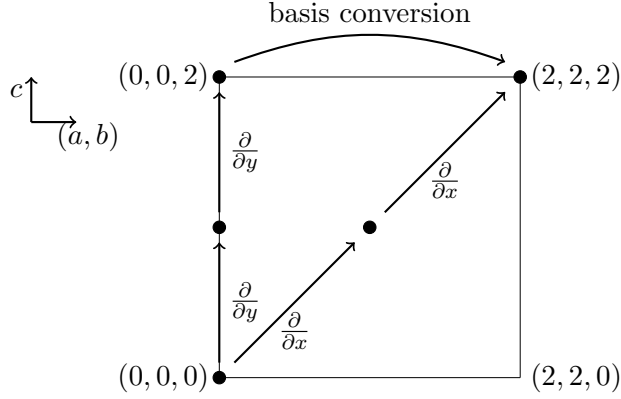

In this section, we concentrate on the disk-slice case, and simply note that similar arguments apply for the trapezium case. Recall that, for the disk-slice,
\begin{align*}
	\Omega := \{(x,y) \in \R^2 \quad | \quad \alpha < x < \beta, \: \gamma \rho(x) < y < \delta \rho(x)\}
\end{align*}
where
\begin{align*}
\begin{cases}
	(\alpha,\beta) &\subset (0,1) \\
	(\gamma,\delta) &:= (-1,1) \\
	\rho(x) &:= (1-x^2)^{\half}
\end{cases}.
\end{align*}
The 2D OPs on the disk-slice $\Omega$, orthogonal with respect to the weight
\begin{align*}
	\Wabc(x,y) &:= \genjacw^{(a,b,2c)}(x) \: \jacw^{(c)}\fpr(\frac{y}{\rho(x)}) \\
	&= (\beta - x)^a \: (x - \alpha)^b \: (1-x^2-y^2)^c, \quad (x,y) \in \Omega,
\end{align*}
are then given by:
\begin{align*}
	\hdopnkabc(x,y) := \genjacnmk^{(a, b, 2c+2k+1)}(x) \: \rho(x)^k \: \jac_k^{(c,c)}\fpr(\frac{y}{\rho(x)}), \quad (x,y) \in \Omega
\end{align*}
where the 1D OPs $\{\genjac^{(a, b, c)}_n\}$ are orthogonal on the interval $(\alpha, \beta)$ with respect to the weight
$$
\genjacw^{(a,b,c)}(x) :=  (\beta - x)^a \: (x - \alpha)^b \: \rho(x)^{c}
$$
and the 1D OPs $\{\jac^{(c, c)}_n\}$ are orthogonal on the interval $(\gamma, \delta) = (-1, 1)$ with respect to the weight
$$
\jacw^{(c)}(x) := (1-x)^c \: (1+x)^c = (1-x^2)^c.
$$

Denote the weighted OPs by
\begin{align*}
\bigWabc(x,y) := \Wabc(x,y) \: \bighdopabc(x,y),
\end{align*}
and recall that a function $f(x,y)$ defined on $\Omega$ is approximated by its expansion $f(x,y) = \bighdopabc(x,y)^\top \mathbf{f}$. 

\begin{definition}\label{def:differentialoperators}
Define the operator matrices $D_x^{(a,b,c)}, \: D_y^{(a,b,c)}, \: W_x^{(a,b,c)}, \: W_y^{(a,b,c)}$ according to:
\begin{align*}
{\partial f \over \partial x}&= \bighdop^{(a+1,b+1,c+1)}(x,y)^\top \: D_x^{(a,b,c)} \: \mathbf{f}, \\
{\partial f \over \partial y} &= \bighdop^{(a,b,c+1)}(x,y)^\top \: D_y^{(a,b,c)} \: \mathbf{f}, \\
{\partial \over \partial x}[\Wabc(x,y) \: f(x,y)] &= \bigW^{(a-1,b-1,c-1)}(x,y)^\top \: W_x^{(a,b,c)} \: \mathbf{f}, \\
{\partial \over \partial y}[\Wabc(x,y) \: f(x,y)] &= \bigW^{(a,b,c-1)}(x,y)^\top \: W_y^{(a,b,c)} \: \mathbf{f}.
\end{align*}
\end{definition}

The incrementing and decrementing of parameters as seen here is analogous to other well known orthogonal polynomial families' derivatives, for example the Jacobi polynomials on the interval, as seen in the DLMF \cite[(18.9.3)]{DLMF}, and on the triangle \cite{olver2018recurrence}.

\begin{theorem}\label{theorem:sparsityofdifferentialoperators}
The operator matrices $D_x^{(a,b,c)}, \: D_y^{(a,b,c)}, \: W_x^{(a,b,c)}, \: W_y^{(a,b,c)}$ from Definition \ref{def:differentialoperators} are sparse, with banded-block-banded structure. More specifically:
\begin{itemize}
	\item $D_x^{(a,b,c)}$ has  block-bandwidths $(-1,3)$, and sub-block-bandwidths $(0, 2)$.
  	\item $D_y^{(a,b,c)}$ has  block-bandwidths $(-1,1)$, and sub-block-bandwidths $(-1,1)$.
	\item $W_x^{(a,b,c)}$ has  block-bandwidths $(3,-1)$, and sub-block-bandwidths $(2, 0)$.
  	\item $W_y^{(a,b,c)}$ has  block-bandwidths $(1,-1)$, and sub-block-bandwidths $(1,-1)$.
\end{itemize}
\end{theorem}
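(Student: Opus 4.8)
The plan is to reduce everything to one-dimensional statements about the two families $\{\genjac_n^{(a,b,c)}\}$ and $\{\jac_n^{(c,c)}\}$, exactly as the construction in Definition~\ref{def:constuction} suggests. Consider first $D_y^{(a,b,c)}$. Since $\hdopnkabc(x,y) = \genjacnmk^{(a,b,2c+2k+1)}(x)\,\rho(x)^k\,\jac_k^{(c,c)}(y/\rho(x))$ and $\partial_y$ only hits the last factor through the chain rule, we get $\partial_y \hdopnkabc = \genjacnmk^{(a,b,2c+2k+1)}(x)\,\rho(x)^{k-1}\,(\jac_k^{(c,c)})'(y/\rho(x))$. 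Now $(\jac_k^{(c,c)})' $ is (up to a constant) $\jac_{k-1}^{(c+1,c+1)}$ by the classical Jacobi derivative formula \cite[(18.9.3)]{DLMF}, and $\rho(x)^{k-1}$ together with the weight reshuffling means the result lives in the degree $n-1$ span of $\{\hdop_{n-1,k-1}^{(a,b,c+1)}\}$. But one must expand $\genjacnmk^{(a,b,2c+2k+1)}$ in the $\{\genjac_m^{(a,b,2c+2k+1)}\}$ basis of the $(a,b,c+1)$-family, whose correct third parameter is $2(c+1)+2(k-1)+1 = 2c+2k+1$ — the parameters match exactly, so only the degree-$(n-1-(k-1)) = n-k$ term survives and $D_y$ is diagonal in the sub-block with a one-block upward shift. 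To nail the stated sub-block-bandwidths $(-1,1)$ one also checks how the index $k$ can change: the derivative forces $k\mapsto k-1$, but re-expanding may also couple to neighbours because of the $\rho(x)$ factors; tracking this gives at most a shift by one in $k$, hence sub-block-bandwidths $(-1,1)$ and block-bandwidths $(-1,1)$.

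For $D_x^{(a,b,c)}$ the computation is longer because $\partial_x$ hits all three factors: the univariate factor $\genjacnmk$, the power $\rho(x)^k$ (producing a factor $-kx\rho(x)^{k-2}$), and the argument $y/\rho(x)$ inside $\jac_k$ (producing $(\jac_k)'(y/\rho(x))\cdot xy\rho(x)^{-3}$). The first term lowers the degree of the $\genjac$ factor by one and, after a connection-coefficient expansion from the $(a,b,2c+2k+1)$ weight to the $(a+1,b+1,2(c+1)+2k+1) = (a+1,b+1,2c+2k+3)$ weight, stays at fixed $k$; the connection coefficients here are the source of the upper block-bandwidth $3$ and the extra reach is what gives sub-block-bandwidth in the $+$ direction. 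The second and third terms involve $\rho(x)^{k-2}$ and the Jacobi derivative $\jac_{k-1}^{(c+1,c+1)}$; combining with $y = \rho(x)\cdot(y/\rho(x))$ and using the three-term recurrence for $\{\jac^{(c+1,c+1)}\}$ to re-expand $t\,\jac_{k-1}^{(c+1,c+1)}(t)$ brings $k$ back to $k-1$ and $k+1$ levels, which after reconciling degrees and weights yields the claimed $(0,2)$ sub-block-bandwidths and $(-1,3)$ block-bandwidths. The bookkeeping — matching the exactly three shifted weight parameters and counting how far the connection-coefficient operators reach — is the main obstacle; everything else is the chain rule and two classical Jacobi identities.

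The weighted operators $W_x^{(a,b,c)}$ and $W_y^{(a,b,c)}$ are then obtained by the same computation applied to $\Wabc(x,y)\hdopnkabc(x,y)$, or more slickly by a duality/adjointness argument: integration by parts shows that $\partial_x$ composed with multiplication by $\Wabc$ is, up to signs and boundary terms that vanish because $\Wabc$ carries the factors $(\beta-x)^a(x-\alpha)^b(1-x^2-y^2)^c$ with $a,b,c\ge 1$, the transpose of $D_x^{(a,b,c)}$ acting between the dual pairs of parameter families. Since transposition swaps block-bandwidths $(L,U)\mapsto(U,L)$ and sub-block-bandwidths $(\lambda,\mu)\mapsto(\mu,\lambda)$, the bandwidths $(-1,3),(0,2)$ for $D_x$ become $(3,-1),(2,0)$ for $W_x$, and likewise $(-1,1),(-1,1)$ for $D_y$ become $(1,-1),(1,-1)$ for $W_y$, matching the theorem. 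Alternatively one repeats the direct chain-rule computation on the weighted polynomials; the parameter shifts run in the opposite direction ($c\mapsto c-1$ etc.) but are otherwise identical. I would present the direct computation for $D_y$ in full as the template, then indicate the analogous steps for $D_x$, and finish $W_x,W_y$ by the transpose argument.
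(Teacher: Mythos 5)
Your overall route is the same as the paper's: expand $\partial_x\hdopnkabc$ and $\partial_y\hdopnkabc$ in the target basis, factor the coefficients into univariate inner products via $t=y/\rho(x)$, and kill entries by orthogonality, parity and degree counting. Two of your choices are actually cleaner than the paper's. For $D_y^{(a,b,c)}$ your parameter-matching observation is exactly right: since $(\jac_k^{(c,c)})'$ is a constant multiple of $\jac_{k-1}^{(c+1,c+1)}$ and $2(c+1)+2(k-1)+1=2c+2k+1$, the derivative $\partial_y\hdopnkabc$ is \emph{exactly} a scalar multiple of $\hdop_{n-1,k-1}^{(a,b,c+1)}$ with no re-expansion at all (your hedge about ``coupling to neighbours'' is unnecessary), whereas the paper reaches the same single entry by integration by parts in $t$. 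Your transpose argument for $W_x,W_y$ is also sound --- the boundary term vanishes for $a,b,c\ge 1$ and transposition swaps the bandwidth pairs --- and is essentially a repackaging of the adjoint identity the paper states at the end of its proof.

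The one genuine gap is in your $D_x^{(a,b,c)}$ argument, and it is the crux of the theorem. You attribute the upper block-bandwidth $3$ (i.e.\ $m\ge n-3$) to ``a connection-coefficient expansion from the $(a,b,2c+2k+1)$ weight to the $(a+1,b+1,2c+2k+3)$ weight.'' But the object being expanded is $\genjacnmk^{(a,b,2c+2k+1)\,\prime}$, not $\genjacnmk^{(a,b,2c+2k+1)}$: the expansion of the \emph{derivative} of an orthogonal polynomial in its own family is full (all degrees $0,\dots,n-k-1$ appear for a generic weight), so composing ``differentiate'' with the banded conversion operator of Lemma \ref{lemma:sparsityofparametertransformationoperators} does not give any lower bound on $m$. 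What actually produces the bound is an integration by parts in $x$: one moves the derivative onto $\genjacmmj^{(a+1,b+1,2c+2j+3)}\genjacw^{(a+1,b+1,2c+k+j+3)}$ and uses the semiclassical structure of the weight --- namely that $\genjacw^{(a+1,b+1,2c+k+j+3)\,\prime}/\genjacw^{(a,b,2c+2k+1)}$ and its companions are polynomials of degree $3+j-k$ --- so that orthogonality of $\genjacnmk^{(a,b,2c+2k+1)}$ against polynomials of degree $m-k+3<n-k$ forces $m\ge n-3$. This is precisely the content of the paper's display (\ref{eqn:dxsparsityR}); without it your argument for $D_x$ does not close. The $t$-direction part of your $D_x$ sketch (parity plus the three-term recurrence giving $j\in\{k-2,k\}$) is fine, except that the first term does not ``stay at fixed $k$'' --- it also reaches $j=k-2$ --- which is harmless since that index is inside the claimed band.
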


\begin{proof}
First, note that:
\begin{align}
	\genjacw^{(a,b,c) \: \prime}(x) &= - a \: \genjacw^{(a-1,b,c)}(x) + b \: \genjacw^{(a,b-1,c)}(x) + c \: \rho(x) \: \rho'(x) \:\genjacw^{(a,b,c-2)}(x), \label{eqn:derivativeofweightgenjac} \\
	\jacw^{(c) \: \prime}(y) &= - 2c \: y \: \jacw^{(c-1)}(y), \label{eqn:derivativeofweightjac} \\
	\rho(x) \: \rho'(x) &= -x. \label{eqn:rhoderivative}
\end{align}

We proceed with the case for the operator $D_y^{(a,b,c)}$ for partial differentiation by $y$. Since $\{\hdop^{(a,b,c+1)}_{m,j}\}$ for $m = 0,\dots,n-1$, $j = 0,\dots,m$ is an orthogonal basis for any degree $n-1$ polynomial, we can expand $\pddy \hdopnkabc = \sum_{m=0}^{n-1} \sum_{j=0}^m c_{m,j}^y \: \hdop^{(a, b, c+1)}_{m,j}$. The coefficients of the expansion are then the entries of the relevant operator matrix. We can use an integration-by-parts argument to show that the only non-zero coefficient of this expansion is when $m = n-1$, $j = k-1$. First, note that
\begin{align*}
	c_{m,j}^y = {\ip<\pddy \hdopnkabc, \hdop^{(a, b, c+1)}_{m,j} >_{W^{(a, b, c+1)}}}{\norm{\hdop^{(a, b, c+1)}_{m,j}}^{-2}_{W^{(a, b+1)}}}.
\end{align*}
Then, using the change of variable $t = \frac{y}{\rho(x)}$, we have that
\begin{align*}
	&\ip<\pddy \hdopnkabc, \hdop^{(a, b, c+1)}_{m,j} >_{W^{(a, b, c+1)}} \\
	&= \iint_\Omega \: \Big[ \genjacnmk^{(a,b,2c+2k+1)}(x) \: \rho(x)^{k-1} \: \jac_k^{(c,c) \: \prime}\fpr(\frac{y}{\rho(x)}) \\
	& \quad \quad \quad \quad \quad \quad \cdot \: \genjacmmj^{(a,b,2c+2j+3)}(x) \: \rho(x)^{j} \: \jac_j^{(c+1,c+1)}\fpr(\frac{y}{\rho(x)}) \Big] \: \D y \: \D x \\
	&= \normgenjac^{(a, b, 2c+2k+1)} \: \ip< \genjacnmk^{(a,b,2c+2k+1)}, \rho(x)^{j-k+1} \: \genjacmmj^{(a,b,2c+2j+3)} >_{\genjacw^{(a,b,2c+2k+1)}} \\
	&\quad \quad \quad \quad \quad \quad \cdot \: \normjac^{(c+1)} \: \ip< \jac_k^{(c,c) \: \prime}, \: \jac_{j}^{(c+1,c+1)} >_{\jacw^{(c+1)}}
\end{align*}
Now, using (\ref{eqn:derivativeofweightjac}), integration-by-parts, and noting that the weight $\jacw^{(c)}$ is a polynomial of degree $2c$ and vanishes at the limits of the integral for positive parameter $c$, we have that
\begin{align*}
	\normjac^{(c+1)} \: \ip< \jac_k^{(c,c) \: \prime}, \: \jac_{j}^{(c+1,c+1)} >_{\jacw^{(c+1)}} &= \int_{\gamma}^\delta \: \jac_k^{(c,c) \: \prime}(y) \: \jac_{j}^{(c+1,c+1)}(y) \: \jacw^{(c+1)}(y) \: \D y \\
	&= - \int_{-1}^1 \: \jac_k^{(c,c)}(y) \: \ddy [\jacw^{(c+1)}(y) \: \jac_{j}^{(c+1,c+1)}(y)] \: \D y \\
	&= - \int_{-1}^1 \: \jac_k^{(c,c)} \: [ \jac_{j}^{(c+1,c+1) \: \prime} \: \jacw^{(c+1)} - 2 c \: y \: \jac_{j}^{(c+1,c+1)} \:\jacw^{(c)} ] \: \D y \\
	&= - \: \normjac^{(c)} \: \ip< \jac_k^{(c,c)}, \: \jacw^{(1)} \: \jac_{j}^{(c+1,c+1) \: \prime} -2c \:y \: \jac_{j}^{(c+1,c+1)} >_{\jacw^{(c)}}
\end{align*}
which is zero for $j < k-1$ by orthogonality. Further, when $j = k-1$, we have that
\begin{align*}
	&\normgenjac^{(a, b, 2c+2k+1)} \: \ip< \genjacnmk^{(a,b,2c+2k+1)}, \rho(x)^{j-k+1} \: \genjacmmj^{(a,b,2c+2j+3)} >_{\genjacw^{(a,b,2c+2k+1)}} \\
	&= \normgenjac^{(a, b, 2c+2k+1)} \: \ip< \genjacnmk^{(a,b,2c+2k+1)}, \genjacmmj^{(a,b,2c+2k+1)} >_{\genjacw^{(a,b,2c+2k+1)}} \\
	&= \normgenjac^{(a, b, 2c+2k+1)} \: \delta_{n,m+1},
\end{align*}
showing that the only possible non-zero coefficient is when $m=n-1, j=k-1$. Finally,
\begin{align*}
	c_{n-1,k-1}^y &= \ip< \jac_k^{(c,c) \: \prime}, \: \jac_{k-1}^{(c+1,c+1)} >_{\jacw^{(c+1)}}.
\end{align*}

We next consider the case for the operator $D_x^{(a,b,c)}$ for partial differentiation by $x$. Since $\{\hdop^{(a+1, b+1, c+1)}_{m,j}\}$ for $m = 0,\dots,n-1$, $j = 0,\dots,m$ is an orthogonal basis for any degree $n-1$ polynomial, we can expand $\pddx \hdopnkabc = \sum_{m=0}^{n-1} \sum_{j=0}^m c_{m,j}^x \: \hdop^{(a+1, b+1, c+1)}_{m,j}$. The coefficients of the expansion are then the entries of the relevant operator matrix. As before, we can use an integration-by-parts argument to show that the only non-zero coefficients of this expansion are when $m = n-1, n-2, n-3$, $j = k, k-1, k-2$ and $0 \le j \le m$. First, note that
\begin{align*}
	c_{m,j}^x = {\ip<\pddx \hdopnkabc, \hdop^{(a+1, b+1, c+1)}_{m,j} >_{W^{(a+1, b+1, c+1)}}}{\norm{\hdop^{(a+1, b+1, c+1)}_{m,j}}^{-2}_{W^{(a+1, b+1, c+1)}}}.
\end{align*}
Now, again using the change of variable $t= \frac{y}{\rho(x)}$, we have that
\begin{align}
	&\ip<\pddx \hdopnkabc, \hdop^{(a+1, b+1, c+1)}_{m,j} >_{W^{(a+1, b+1, c+1)}} \nonumber \\ 
	&= \Big( \int_\alpha^\beta \: \genjacnmk^{(a, b, 2c+2k+1) \: \prime} \: \genjacmmj^{(a+1, b+1, 2c+2j+3)} \: \rho^{k+j+1} \: \genjacw^{(a+1, b+1, 2c+2)} \: \D x \Big) \nonumber \\
	&\quad \quad \quad \quad \quad \cdot \: \Big( \int_\gamma^\delta \jac_k^{(c,c)} \: \jac_j^{(c+1,c+1)} \: \jacw^{(c+1)} \: \D t \Big) \nonumber \\ 
	& \quad + k \: \Big( \int_\alpha^\beta \: \genjacnmk^{(a, b, 2c+2k+1)} \: \genjacmmj^{(a+1, b+1, 2c+2j+3)} \: \rho^{k+j} \: \rho' \: \genjacw^{(a+1, b+1, 2c+2)} \: \D x \Big) \nonumber \\
	& \quad \quad \quad \quad \quad \cdot \: \Big( \int_\gamma^\delta \jac_k^{(c,c)} \: \jac_j^{(c+1,c+1)} \: \jacw^{(c+1)} \: \D t \Big) \nonumber \\
	& \quad - \Big( \int_\alpha^\beta \: \genjacnmk^{(a, b, 2c+2k+1)} \: \genjacmmj^{(a+1, b+1, 2c+2j+3)} \: \rho^{k+j} \: \rho' \: \genjacw^{(a+1, b+1, 2c+2)} \: \D x \Big) \nonumber \\
	& \quad \quad \quad \quad \quad \cdot \: \Big( \int_\gamma^\delta t \: \jac_k^{(c,c) \: \prime} \: \jac_j^{(c+1,c+1)} \: \jacw^{(c+1)} \: \D t \Big). \label{eqn:dxsparsity}
\end{align}
We will first show that the second factor of each term in (\ref{eqn:dxsparsity}) are zero for $j < k - 2$ and also for $j = k - 1$. To this end, observe that, for any integer $c$, $\jac^{(c,c)}(-t) = (-1)^k \: \jac^{(c,c)}(t)$ and so $\jac_k^{(c,c)}$ is an even polynomial for even $k$, and an odd polynomial for odd $k$. Thus, $\jac_k^{(c,c)} \: \jac_{k-1}^{(c+1,c+1)}$ is an odd polynomial for any $k$. Hence
\begin{align*}
	\int_\gamma^\delta \jac_k^{(c,c)} \: \jac_j^{(c+1,c+1)} \: \jacw^{(c+1)} \: \D t = \int_{-\delta}^\delta \jac_k^{(c,c)} \: \jac_j^{(c+1,c+1)} \: \jacw^{(1)} \: \jacw^{(c)} \: \D t 
\end{align*}
is zero for $j < k - 2$ by orthogonality, and is zero for $j = k-1$ due to symmetry over the domain. Moreover, $t \: \jac_k^{(c,c) \: \prime}(t) \: \jac_j^{(c+1,c+1)}(t)$ is also an odd polynomial for any $k$ and so
\begin{align*}
	\int_\gamma^\delta t \: \jac_k^{(c,c) \: \prime}(t) \: \jac_j^{(c+1,c+1)}(t) \: \jacw^{(c+1)}(t) \: \D t
\end{align*}
is zero for $j = k-1$ due to symmetry over the domain, and
\begin{align*}
	&\int_\gamma^\delta t \: \jac_k^{(c,c) \: \prime} \: \jac_j^{(c+1,c+1)} \: \jacw^{(c+1)} \: \D t \\
	&= - \int_{-\delta}^\delta \jac_k^{(c,c)} \: \frac{\D}{\D t} \big[ t \: \jac_j^{(c+1,c+1)} \: \jacw^{(c+1)} \big] \: \D t \\
	&= - \int_{-\delta}^\delta \jac_k^{(c,c)} \: \big[ \jac_j^{(c+1,c+1)} \: \jacw^{(c+1)} + t \: \jac_j^{(c+1,c+1) \: \prime} \: \jacw^{(c+1)} - 2 c \: t^2 \: \jac_j^{(c+1,c+1)} \: \jacw^{(c)} \big] \: \D t \\
	&= - \: \normjac^{(c)} \: \ip< \jac_k^{(c,c)}, \: \jac_j^{(c+1,c+1)} \: \jacw^{(1)} + t \: \jac_{j}^{(c+1,c+1) \: \prime} \: \jacw^{(1)} - 2c \: t^2 \: \jac_{j}^{(c+1,c+1)}  >_{\jacw^{(c)}}
\end{align*}
which is zero for $j < k - 2$ by orthogonality. Thus, (\ref{eqn:dxsparsity}) is zero for $j \notin \{k-2, k\}$.

Now, using (\ref{eqn:derivativeofweightgenjac}), integration-by-parts, and noting that the weight $\genjacw^{(a,b,2c)}$ is a polynomial degree $a+b+2c$ and vanishes at the limits of the integral for positive parameters $a,b,c$, we have that
\begin{align}
	&\int_\alpha^\beta \: \genjacnmk^{(a, b, 2c+2k+1) \: \prime} \: \genjacmmj^{(a+1, b+1, 2c+2j+3)} \: \rho^{k+j+1} \: \genjacw^{(a+1, b+1, 2c+2)} \: \D x \nonumber  \\
	&= \int_\alpha^\beta \: \genjacnmk^{(a, b, 2c+2k+1) \: \prime} \: \genjacmmj^{(a+1, b+1, 2c+2j+3)} \: \genjacw^{(a+1, b+1, 2c+k+j+3)} \: \D x \nonumber \\
	&= - \int_\alpha^\beta \: \genjacnmk^{(a, b, 2c+2k+1)} \: \ddx \Big[ \genjacmmj^{(a+1, b+1, 2c+2j+3)} \: \genjacw^{(a+1, b+1, 2c+k+j+3)} \Big] \: \D x \nonumber \\
	&= - \int_\alpha^\beta \: \genjacnmk^{(a, b, 2c+2k+1)} \: \Big\{ \genjacmmj^{(a+1, b+1, 2c+2j+3) \: \prime} \: \genjacw^{(a+1, b+1, 2c+k+j+3)} \nonumber \\
	&\quad \quad \quad \quad \quad \quad \quad \quad \quad \quad \quad \quad + (2c+k+j+3) \: \rho \: \rho' \:  \genjacw^{(a+1, b+1, 2c+k+j+1)} \: \genjacmmj^{(a+1, b+1, 2c+2j+3)} \nonumber \\
	&\quad \quad \quad \quad \quad \quad \quad \quad \quad \quad \quad \quad + (b+1) \: \genjacw^{(a+1, b, 2c+k+j+3)} \: \genjacmmj^{(a+1, b+1, 2c+2j+3)} \nonumber \\
	&\quad \quad \quad \quad \quad \quad \quad \quad \quad \quad \quad \quad - (a+1) \: \genjacw^{(a, b+1, 2c+k+j+3)} \: \genjacmmj^{(a+1, b+1, 2c+2j+3)} \Big\} \: \D x \nonumber \\
	&= - \: \normgenjac^{(a, b, 2c+2k+1)} \:  \Big\{ \ip<\genjacnmk^{(a, b, 2c+2k+1)}, \: \genjacw^{(1, 1, j-k+2)} \: \genjacmmj^{(a+1, b+1, 2c+2j+3) \: \prime}>_{\genjacw^{(a, b, 2c+2k+1)}} \nonumber \\
		&\quad \quad \quad + (2c+k+j+3) \: \ip<\genjacnmk^{(a, b, 2c+2k+1)}, \: \rho \: \rho' \: \genjacw^{(1,1, j-k)} \genjacmmj^{(a+1, b+1, 2c+2j+3)} >_{\genjacw^{(a, b, 2c+2k+1)}} \nonumber \\
		&\quad \quad \quad + (b+1) \: \ip<\genjacnmk^{(a, b, 2c+2k+1)}, \: \genjacw^{(1,0, j-k+2)} \genjacmmj^{(a+1, b+1, 2c+2j+3)} >_{\genjacw^{(a, b, 2c+2k+1)}} \nonumber \\
		&\quad \quad \quad - (a+1) \: \ip<\genjacnmk^{(a, b, 2c+2k+1)}, \: \genjacw^{(0,1, j-k+2)} \genjacmmj^{(a+1, b+1, 2c+2j+3)} >_{\genjacw^{(a, b, 2c+2k+1)}} \Big\}. \label{eqn:dxsparsityR}
\end{align}
By recalling (\ref{eqn:rhoderivative}) and noting that $j-k$ is even by the earlier argument, we can see $\rho \: \rho' \: \genjacw^{(1,1, j-k)}$, $\genjacw^{(1,0, j-k+2)}$ and $\genjacw^{(1,0, j-k+2)}$ are all polynomials, and further that 
$$\text{deg}(\rho \: \rho' \: \genjacw^{(1,1, j-k)}) = \text{deg} (\genjacw^{(1,0, j-k+2)}) = \text{deg} (\genjacw^{(0,1, j-k+2)}) = 3 + j-k.$$ 
Hence, by orthogonality, each term in (\ref{eqn:dxsparsityR}) is is zero for $m-j+3+j-k < n-k \iff m < n - 3$.

Finally,
\begin{align*}
	&\int_\alpha^\beta \: \genjacnmk^{(a, b, 2c+2k+1)} \: \genjacmmj^{(a+1, b+1, 2c+2j+3)} \: \rho^{k+j} \: \rho' \: \genjacw^{(a+1, b+1, 2c+2)} \: \D x \\
	&= \normgenjac^{(a, b, 2c+2k+1)} \: \ip< \genjacnmk^{(a, b, 2c+2k+1)}, \: \rho \: \rho' \: \genjacw^{(1, 1, j-k)} \: \genjacmmj^{(a+1, b+1, 2c+2j+3)} >_{\genjacw^{(a, b, 2c+2k+1)}}
\end{align*}
which is also zero for $m < n - 3$. Thus
\begin{align*}
	&\ip<\pddx \hdopnkabc, \hdop^{(a+1, b+1, c+1)}_{m,j} >_{W^{(a+1, b+1, c+1)}} = 0 \qfor m < n - 3, \: j \notin \{k-2, k\},
\end{align*}
showing that the only possible non-zero coefficients $c_{m,j}^x$ are when $m = n-3,\dots,n$ and $j = k-2,k$ $(j \le m)$.

We can gain the non-zero entries of the weighted differential operators similarly, by noting that for the disk-slice
\begin{align}
	\pddx \Wabc(x,y) &= -a W^{(a-1, b, c)}(x,y) + bW^{(a, b, c)}(x,y) + 2c \rho(x) \: \rho'(x) \: W^{(a, b, c-1)} \label{eqn:weightderivativex} \\
	\pddy \Wabc(x,y) &= -2c \: y \: W^{(a, b, c-1)}(x,y) \label{eqn:weightderivativey}
\end{align}
and also that
\begin{align*}
	\ip< \Wabc \hdopnkabc, W^{(\tilde{a},\tilde{b},\tilde{c})} \hdopmj^{(\tilde{a},\tilde{b},\tilde{c})} >_{W^{(-\tilde{a},-\tilde{b},-\tilde{c})}} = \ip< \hdopnkab, \hdopmj^{(\tilde{a},\tilde{b})} >_\Wabc.
\end{align*}

\end{proof}

There exist conversion matrix operators that increment/decrement the parameters, transforming the OPs from one (weighted or non-weighted) parameter space to another. 

\begin{definition}\label{def:parametertransformationoperators}
Define the operator matrices 
$$
T^{(a,b,c)\to(a+1,b+1,c)}, \quad T^{(a,b,c)\to(a,b,c+1)}\qqand T^{(a,b,c)\to(a+1,b+1,c+1)}
$$ 
for conversion between non-weighted spaces, and 
$$
T_W^{(a,b,c)\to(a-1,b-1,c)}, \quad T_W^{(a,b,c)\to(a,b,c-1)} \qqand T_W^{(a,b,c)\to(a-1,b-1,c-1)}
$$ 
for conversion between weighted spaces, according to:
\begin{align*}
	\bighdop^{(a,b,c)}(x,y) &= \Big(T^{(a,b,c)\to(a+1,b+1,c)} \Big)^\top \: \bighdop^{(a+1,b+1,c)}(x,y) \\
	\bighdop^{(a,b,c)}(x,y) &= \Big(T^{(a,b,c)\to(a,b,c+1)} \Big)^\top \: \bighdop^{(a,b,c+1)}(x,y) \\
	\bighdop^{(a,b,c)}(x,y) &= \Big(T^{(a,b,c)\to(a+1,b+1,c+1)} \Big)^\top \: \bighdop^{(a+1,b+1,c+1)}(x,y) \\
	\bigW^{(a,b,c)}(x,y) &= \Big(T_W^{(a,b,c)\to(a-1,b-1,c)} \Big)^\top \: \bigW^{(a-1,b-1,c)}(x,y) \\
	\bigW^{(a,b,c)}(x,y) &= \Big(T_W^{(a,b,c)\to(a,b,c-1)} \Big)^\top \: \bigW^{(a,b,c-1)}(x,y) \\
	\bigW^{(a,b,c)}(x,y) &= \Big(T_W^{(a,b,c)\to(a-1,b-1,c-1)} \Big)^\top \: \bigW^{(a-1,b-1,c-1)}(x,y).
\end{align*}
\end{definition}

\begin{lemma}\label{lemma:sparsityofparametertransformationoperators}
The operator matrices in Definition \ref{def:parametertransformationoperators} are sparse, with banded-block-banded structure. More specifically:
\begin{itemize}
	\item $T^{(a,b,c)\to(a+1,b+1,c)}$ has block-bandwidth $(0,2)$, with diagonal blocks.
  	\item $T^{(a,b,c)\to(a,b,c+1)}$ has block-bandwidth $(0,2)$ and sub-block-bandwidth $(0,2)$.
	\item $T^{(a,b,c)\to(a+1,b+1,c+1)}$ has block-bandwidth $(0,4)$ and sub-block-bandwidth $(0,2)$.
	\item $T_W^{(a,b,c)\to(a-1,b-1,c)}$ has block-bandwidth $(2,0)$ with diagonal blocks.
  	\item $T_W^{(a,b,c)\to(a,b,c-1)}$ has block-bandwidth $(2,0)$ and sub-block-bandwidth $(2,0)$.
	\item $T_W^{(a,b,c)\to(a-1,b-1,c-1)}$ has block-bandwidth $(4,0)$ and sub-block-bandwidth $(2,0)$.
\end{itemize}
\end{lemma}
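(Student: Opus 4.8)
The plan is to reuse the integration-by-parts/orthogonality machinery from the proof of Theorem~\ref{theorem:sparsityofdifferentialoperators}. For each of the six operators of Definition~\ref{def:parametertransformationoperators} I would expand the relevant source object — $\hdop^{(a,b,c)}_{n,k}$ in the appropriate target basis for the $T$'s, and $\Wabc\,\hdop^{(a,b,c)}_{n,k}$ for the $T_W$'s — and read off each expansion coefficient as an inner product. The substitution $t = y/\rho(x)$ (as in \eqref{eqn:normhdop}) factors that inner product into a one-dimensional integral over $(\alpha,\beta)$ pairing the $\genjac$-factor of the source against a $\genjac$ polynomial, times a one-dimensional integral over $(\gamma,\delta) = (-1,1)$ pairing $\jac_k^{(c,c)}$ against a $\jac$ polynomial. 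Orthogonality together with a degree count in each factor then forces the block index $m$ and sub-block index $j$ of any nonzero entry into short ranges, giving the stated (sub-)block-bandwidths.

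I would handle the two conversions that leave $c$ fixed first. There the $\jac$-family $\{\jac_l^{(c,c)}\}$ does not change, so the $t$-integral is $\ip<\jac_k^{(c,c)},\jac_j^{(c,c)}>_{\jacw^{(c)}} \propto \delta_{k,j}$, which pins $j = k$ and gives diagonal blocks. For $T^{(a,b,c)\to(a+1,b+1,c)}$ the surviving $x$-integral is (up to a nonzero constant) $\ip<\genjacnmk^{(a,b,2c+2k+1)}, (\beta-x)(x-\alpha)\,\genjac_{m-k}^{(a+1,b+1,2c+2k+1)}>_{\genjacw^{(a,b,2c+2k+1)}}$, where the factor $(\beta-x)(x-\alpha)$ appears because $\genjacw^{(a+1,b+1,\cdot)} = (\beta-x)(x-\alpha)\,\genjacw^{(a,b,\cdot)}$; by orthogonality this vanishes for $m < n-2$, and with $m \le n$ this yields block-bandwidth $(0,2)$. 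The weighted operator $T_W^{(a,b,c)\to(a-1,b-1,c)}$ is the mirror case: since $\Wabc/W^{(a-1,b-1,c)} = (\beta-x)(x-\alpha)$ one expands $(\beta-x)(x-\alpha)\,\hdop^{(a,b,c)}_{n,k}$, the $x$-integral becomes $\ip<\genjacnmk^{(a,b,2c+2k+1)}, \genjac_{m-k}^{(a-1,b-1,2c+2k+1)}>_{\genjacw^{(a,b,2c+2k+1)}}$ which vanishes for $m > n$, and with $m \le n+2$ the block-bandwidth is $(2,0)$, again with diagonal blocks.

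The conversions that change $c$ require one extra step, because the $\jac$-order $k$ also appears in the third $\genjac$-parameter $2c+2k+1$, so a shift of $k$ moves a power of $\rho(x)^2 = 1-x^2$ between the two factors. For $T^{(a,b,c)\to(a,b,c+1)}$ I would first expand $\jac_k^{(c,c)}$ in $\{\jac_l^{(c+1,c+1)}\}$: since $\jacw^{(c)} = (1-t^2)\,\jacw^{(c+1)}$, and $\jac_k^{(c,c)}$ has the parity of $k$ (the fact already used for $D_x$), only $l \in \{k-2, k\}$ occur. Substituting back, the $l=k$ term gives $\genjacnmk^{(a,b,2c+2k+1)}$ expanded in $\{\genjac^{(a,b,2c+2k+3)}\}$, while the $l=k-2$ term carries a factor $1-x^2$ to be absorbed into the weight, leaving $(1-x^2)\,\genjacnmk^{(a,b,2c+2k+1)}$ expanded in $\{\genjac^{(a,b,2c+2k-1)}\}$; in both cases orthogonality and degree counting confine $m$ to $\{n-2,n-1,n\}$, so the (sub-)block-bandwidths are $(0,2)$. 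The weighted case $T_W^{(a,b,c)\to(a,b,c-1)}$ goes the same way after writing $\Wabc/W^{(a,b,c-1)} = 1-x^2-y^2 = \rho(x)^2(1-t^2)$, expanding $(1-t^2)\jac_k^{(c,c)}$ in $\{\jac_l^{(c-1,c-1)}\}$ (parity gives $l \in \{k, k+2\}$) and distributing the remaining $\rho(x)^2$; it comes out with bandwidths $(2,0)$. Finally $T^{(a,b,c)\to(a+1,b+1,c+1)}$ and $T_W^{(a,b,c)\to(a-1,b-1,c-1)}$ follow by composing the single-parameter conversions just treated: block-bandwidths add, giving $(0,4)$ and $(4,0)$, and since in each composition one of the two factors has diagonal blocks, the sub-block bandwidth is inherited from the other, giving $(0,2)$ and $(2,0)$. (They can also be obtained directly by the same two-factor argument, with $\genjacw^{(a\pm1,b\pm1,\cdot)}$ now contributing a $(\beta-x)(x-\alpha)$ factor on top of the $1-x^2$ from the $c$-shift.)

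The main difficulty is purely bookkeeping: when $k$ shifts by $\pm2$ in the $\jac$-factor one must correctly route the compensating power of $1-x^2$ into the $\genjac$ inner product and rewrite it against the exact weight $\genjacw^{(a,b,2c+2k+1)}$ before orthogonality applies, then verify the degree counts leave the ranges of $m$ and $j$ no wider than claimed. The single load-bearing observation is the parity of $\jac_k^{(c,c)}$ on $(-1,1)$, which collapses $\{k-2,k-1,k\}$ to $\{k-2,k\}$ (and $\{k,k+1,k+2\}$ to $\{k,k+2\}$), producing the stated sub-block bandwidths rather than larger ones — exactly as in the $D_x$ part of Theorem~\ref{theorem:sparsityofdifferentialoperators}.
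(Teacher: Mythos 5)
Your proposal is correct and follows essentially the same route as the paper: expand the source polynomial (or its weighted version) in the target basis, factor each coefficient via $t = y/\rho(x)$ into a $\genjac$-inner product over $(\alpha,\beta)$ and a $\jac$-inner product over $(-1,1)$, and bound the block and sub-block indices by orthogonality plus a degree count on the ratio of weights --- the paper just does all three unweighted cases in one computation with $\tilde a,\tilde b,\tilde c\in\{0,1\}$, noting that $\jacw^{(\tilde c)}$ has degree $2\tilde c$ and $\genjacw^{(\tilde a,\tilde b,2\tilde c+j-k)}$ has degree $\tilde a+\tilde b+2\tilde c+j-k$, rather than splitting into your two-step Jacobi expansion and composition arguments. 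Two small remarks: the parity of $\jac_k^{(c,c)}$ is not actually load-bearing here (plain degree counting already yields $j\ge k-2\tilde c$, i.e.\ the stated sub-block bandwidths; parity merely kills the $j=k-1$ entry, a refinement the lemma does not claim), and in your $T_W^{(a,b,c)\to(a-1,b-1,c)}$ paragraph the inner product vanishes for $m<n$, not $m>n$, which together with $m\le n+2$ gives the claimed $(2,0)$.
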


\begin{proof}
We proceed with the case for the non-weighted operators $T^{(a,b)\to(a+\tilde{a},b+\tilde{b},c+\tilde{c})}$, where $\tilde{a}, \tilde{b}, \tilde{c} \in \{0,1\}$. Since $\{\hdop^{(a+\tilde{a},b+\tilde{b},c+\tilde{c})}_{m,j}\}$ for $m = 0,\dots,n$, $j = 0,\dots,m$ is an orthogonal basis for any degree $n$ polynomial, we can expand $\hdopnkabc = \sum_{m=0}^{n} \sum_{j=0}^m c_{m,j} \: \hdop^{(a+\tilde{a},b+\tilde{b},c+\tilde{c})}_{m,j}$. The coefficients of the expansion are then the entries of the relevant operator matrix. We will show that the only non-zero coefficients are for $m \ge n - \tilde{a} - \tilde{b} - 2\tilde{c} $, $j \ge k-2\tilde{c}$ and $0 \le j \le m$. First, note that
\begin{align*}
	c_{m,j} = {\ip< \hdopnkabc, \hdop^{(a+\tilde{a},b+\tilde{b},c+\tilde{c})}_{m,j} >_{W^{(a+\tilde{a},b+\tilde{b},c+\tilde{c})}}}{\norm{\hdop^{(a+\tilde{a},b+\tilde{b},c+\tilde{c})}_{m,j}}^{-2}_{W^{(a+\tilde{a},b+\tilde{b},c+\tilde{c})}}}.
\end{align*}
Then, using the change of variable $t = \frac{y}{\rho(x)}$, we have that
\begin{align*}
	&\ip< \hdopnkabc, \hdop^{(a+\tilde{a},b+\tilde{b},c+\tilde{c})}_{m,j} >_{W^{(a+\tilde{a},b+\tilde{b},c+\tilde{c})}} \\
	&\quad = \normgenjac^{(a+\tilde{a},b+\tilde{b},2c+2\tilde{c})} \: \ip< \genjacnmk^{(a,b,2c+2k+1)}, \rho(x)^{k+j+1} \: \genjacmmj^{(a+\tilde{a},b+\tilde{b},2c+2\tilde{c}+2j+1)} >_{\genjacw^{(a+\tilde{a},b+\tilde{b},2c+2\tilde{c})}} \\ 
	&\quad \quad \quad \cdot \: \normjac^{(c+\tilde{c})} \: \ip< \jac_k^{(c,c)}, \: \jac_{j}^{(c+\tilde{c},c+\tilde{c})} >_{\jacw^{(c+\tilde{c})}} \\
	&\quad= \normgenjac^{(a,b,2c+2k+1)} \: \ip< \genjacnmk^{(a,b,2c+2k+1)}, \genjacw^{(\tilde{a}, \tilde{b}, \: 2\tilde{c}+j-k)} \: \genjacmmj^{(a+\tilde{a},b+\tilde{b},2c+2\tilde{c}+2j+1)} >_{\genjacw^{(a,b,2c+2k+1)}} \\
	&\quad \quad \quad \cdot \: \normjac^{(c)} \: \ip< \jac_k^{(c,c)}, \: \jacw^{(\tilde{c})} \: \jac_{j}^{(c+\tilde{c},c+\tilde{c})} >_{\jacw^{(c)}}.
\end{align*}
Since $\jacw^{(\tilde{c})}$ is a polynomial degree $2\tilde{c}$, we have that the above is then zero for $j < k-2\tilde{c}$. Further, since $\genjacw^{(\tilde{a}, \tilde{b}, \: 2\tilde{c}+j-k)}$ is a polynomial of degree $\tilde{a}+\tilde{b}+2\tilde{c}+j-k$, we have that the above is zero for $m-j+\tilde{a}+\tilde{b}+2\tilde{c}+j-k < n-k \iff m < n - \tilde{a}-\tilde{b}-2\tilde{c}$.

The sparsity argument for the weighted parameter transformation operators follows similarly.
\end{proof}

\begin{figure}[t]
	\begin{subfigure}{0.32\textwidth}
	\includegraphics[scale=0.35]{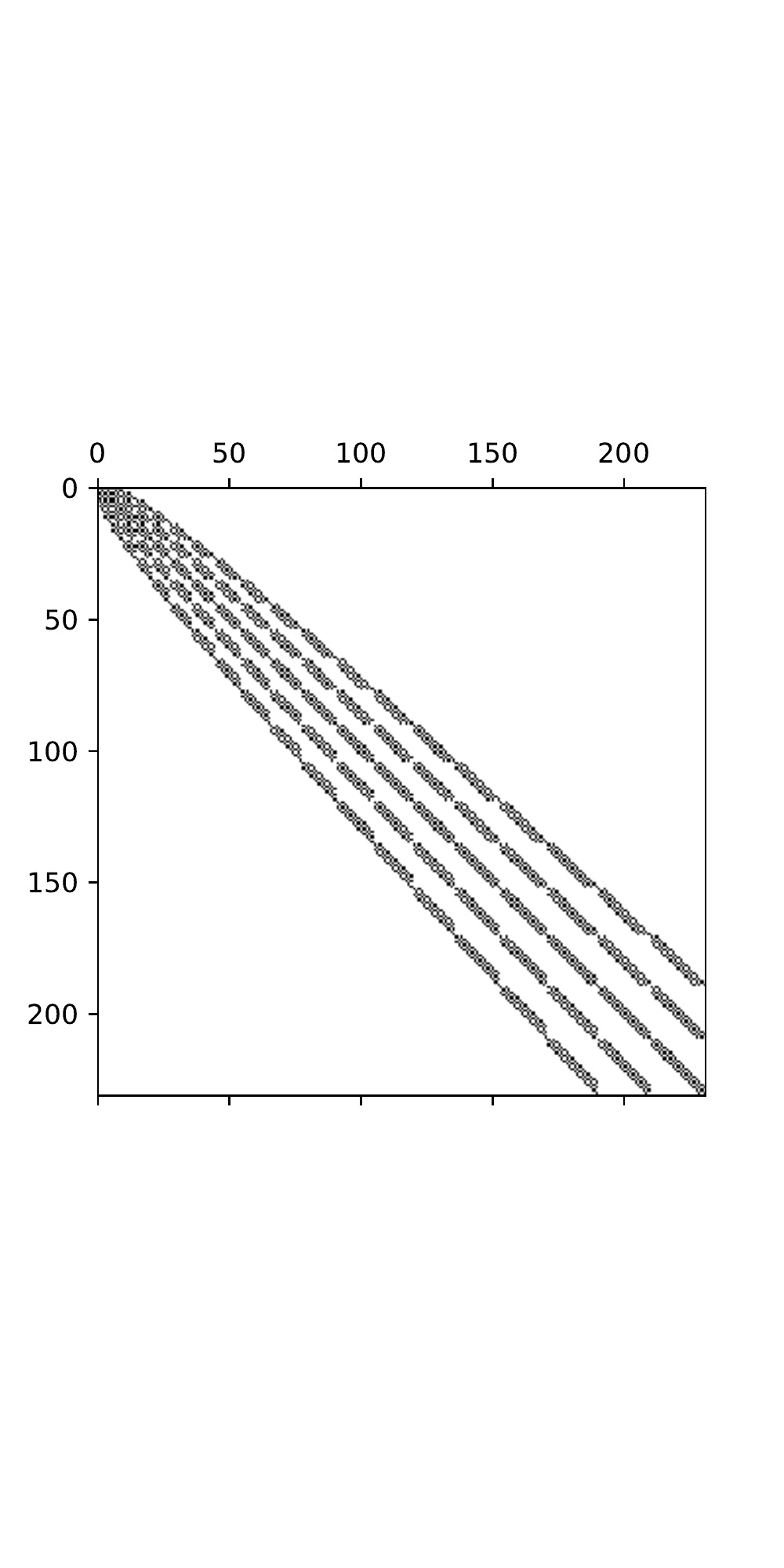}
        \centering
	\end{subfigure}
	\begin{subfigure}{0.32\textwidth}
	\includegraphics[scale=0.35]{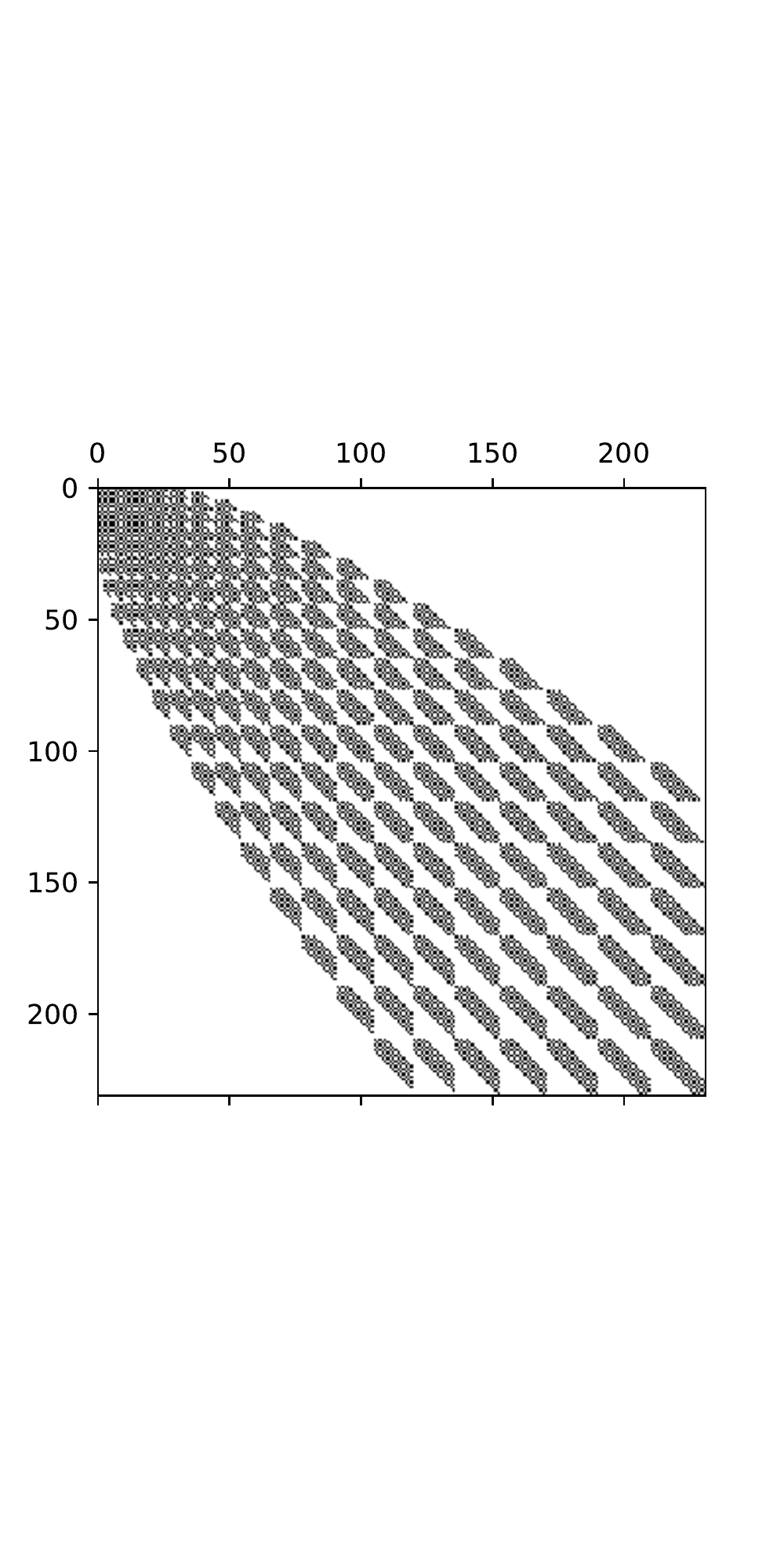}
        \centering
	\end{subfigure}
	\begin{subfigure}{0.32\textwidth}
	\includegraphics[scale=0.35]{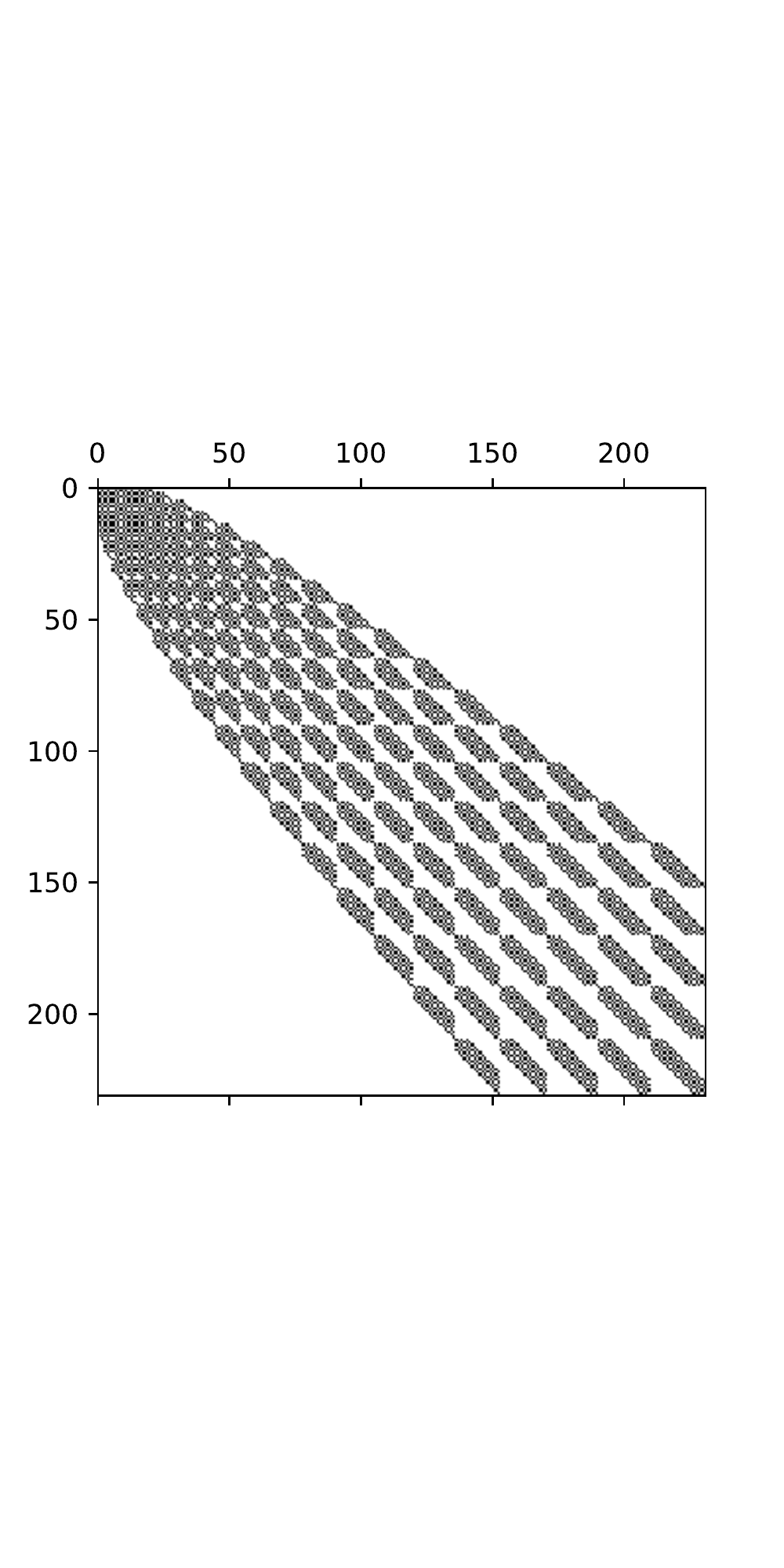}
        \centering
	\end{subfigure}
    	\caption{"Spy" plots of (differential) operator matrices, showing their sparsity. Left: the Laplace operator $\laplacewiii$. Centre: the weighted variable coefficient Helmholtz operator $\laplacewiii + k^2 \: T^{(0,0,0)\to(1,1,1)} \: V({J_x^{(0,0,0)}}^\top, {J_y^{(0,0,0)}}^\top) \: T_W^{(1,1,1)\to(0,0,0)}$ for $v(x,y) = 1 - (3(x-1)^2 + 5y^2)$ and $k = 200$. Right: the biharmonic operator $\biharmonictwo$.}
        \label{fig:sparsity}
        \centering
\end{figure}

General linear partial differential operators with polynomial variable coefficients can be constructed by composing the sparse representations for partial derivatives, conversion between bases, and Jacobi operators. As a canonical example, we can obtain the matrix operator for the Laplacian \(\Delta\), that will take us from coefficients for expansion in the weighted space
$$
\bigWiii(x,y) = \Wiii(x,y) \: \bighdopiii(x,y)
$$
to coefficients in the non-weighted space $\bighdopiii(x,y)$. Note that this construction will ensure the imposition of the Dirichlet zero boundary conditions on $\Omega$. The matrix operator for the Laplacian we denote $\laplacewiii$ acting on the coefficients vector is then given by
\begin{align*}
    \laplacewiii := D_x^{(0,0,0)} \: W_x^{(1,1,1)} + T^{(0,0,1)\to(1,1)} \: D_y^{(0,0,0)} \: T_W^{(1,1,0)\to(0,0,0)} \: W_y^{(1,1,1)}.
\end{align*}
Importantly, this operator will have banded-block-banded structure, and hence will be sparse, as seen in Figure \ref{fig:sparsity}.

Another important example is the Biharmonic operator $\Delta^2$, where we assume zero Dirichlet and Neumann conditions. To construct this operator, we first note that we can obtain the matrix operator for the Laplacian $\Delta$ that will take us from coefficients for expansion in the space $\bighdopooo(x,y)$ to coefficients in the space $\bighdop^{(2,2)}(x,y)$. We denote this matrix operator that acts on the coefficients vector as $\laplaceooo$, and is given by
\begin{align*}
    \laplaceooo := D_x^{(1,1,1)} \: D_x^{(0,0,0)} + T^{(1,1,2)\to(2,2,2)} \: D_y^{(1,1,1)} \: T^{(0,0,1)\to(1,1,1)} \: D_y^{(0,0,0)}.
\end{align*}
Further, we can represent the Laplacian as a map from coefficients in the space $\bigW^{(2,2)}$ to coefficients in the space $\bighdopooo$. Note that a function expanded in the $\bigW^{(2,2)}$ basis will satisfy both zero Dirichlet and Neumann boundary conditions on $\Omega$. We denote this matrix operator as $\laplacewttt$, and is given by
\begin{align*}
	\laplacewttt := W_x^{(1,1,1)} \: W_x^{(2,2,2)} + T_W^{(1,1,0)\to(0,0,0)} \: W_y^{(1,1,1)} \: T_W^{(2,2,1)\to(1,1,1)} \: W_y^{(2,2,2)}.
\end{align*}
We can then construct a matrix operator for $\Delta^2$ that will take coefficients in the space $\bigW^{(2,2,2)}$ to coefficients in the space $\bighdop^{(2,2,2)}$. Note that any function expanded in the $\bigW^{(2,2,2)}$ basis will satisfy both zero Dirichlet and zero Neumann boundary conditions on $\Omega$. The matrix operator for the Biharmonic operator is then given by
\begin{align*}
	\biharmonictwo = \laplaceooo \: \laplacewttt.
\end{align*}
The sparsity and structure of this biharmonic operator are seen in Figure \ref{fig:sparsity}.

\section{Computational aspects}\label{Section:Computation}

In this section we discuss how to take advantage of the proposed basis and sparsity structure in partial differential operators in practical computational applications.

\subsection{Constructing $\genjac_n^{(a,b,c)}(x)$}


It is possible to obtain the recurrence coefficients for the $\{\genjac_n^{(a,b,c)}\}$ OPs in (\ref{eqn:Hrecurrence}), by careful application of the Christoffel--Darboux formula \cite[18.2.12]{DLMF}. We explain the process here for the disk-slice case, however we note that a similar but simpler argument holds for the trapezium case. We thus first need to define a new set of `interim' 1D OPs.
\begin{definition}\label{def:InterimOPconstruction}
Let $\genjactw^{(a,b,c,d)}(x) := (\beta - x)^a \: (x - \alpha)^{b} \: (1-x)^{c} \: (1+x)^{d} $ be a weight function on the interval $(\alpha, \beta)$, and define the associated inner product by:
\begin{align}
	\ip< p, \: q >_{\genjactw^{(a,b,c,d)}} &:= \frac{1}{\normgenjact^{(a,b,c,d)}} \: \int_\alpha^\beta p(x) \: q(x) \: \genjactw^{(a,b,c,d)}(x) \: \D x \label{eqn:ipgenjact}
\end{align}
where
\begin{align}
	\normgenjact^{(a,b,c,d)} := \int_\alpha^\beta \: \genjactw^{(a,b,c,d)}(x) \: \D x
\end{align}
Denote the four-parameter family of orthonormal polynomials on $[\alpha,\beta]$ by $\{\genjact_n^{(a,b,c,d)}\}$, orthonormal with respect to the inner product defined in (\ref{eqn:ipgenjact}).
\end{definition}
Note that the OPs $\{\genjac_n^{(a,b,2c)}\}$ are then equivalent to the OPs $\{\genjact_n^{(a,b,c,c)}\}$. Let the recurrence coefficients for the OPs $\{\genjact_n^{(a,b,c,d)}\}$ be given by:
\begin{align}
	x \: \genjact_n^{(a,b,c,d)}(x) = \tilde{\beta}_n^{(a,b,c,d)} \: \genjact_{n+1}^{(a,b,c,d)}(x) + \tilde{\alpha}_n^{(a,b,c,d)} \:\genjact_n^{(a,b,c,d)}(x) + \tilde{\beta}_{n-1}^{(a,b,c,d)} \: \genjact_{n-1}^{(a,b,c,d)}(x)
\end{align}
\begin{proposition}
There exist constants $\mathcal{C}_n^{(a,b,c,d)}$, $\mathcal{D}_n^{(a,b,c,d)}$ such that
\begin{align}
	\genjact_n^{(a,b,c+1,d)}(x) &= \mathcal{C}_n^{(a,b,c,d)} \: \sum_{k=0}^n \: \genjact_{k}^{(a,b,c,d)}(1) \: \genjact_{k}^{(a,b,c,d)}(x) \label{eqn:genjacexpansioni} \\
	\genjact_n^{(a,b,c,d+1)}(x) &= \mathcal{D}_n^{(a,b,c,d)} \: \sum_{k=0}^n \: \genjact_{k}^{(a,b,c,d)}(-1) \: \genjact_{k}^{(a,b,c,d)}(x) \label{eqn:genjacexpansionii}
\end{align}
\end{proposition}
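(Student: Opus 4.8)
The plan is to recognise the two stated identities as instances of Christoffel's theorem — the effect on a family of orthogonal polynomials of multiplying its weight by a linear factor — expressed through the Christoffel--Darboux kernel. The starting observation is that, directly from Definition~\ref{def:InterimOPconstruction},
$$
\genjactw^{(a,b,c+1,d)}(x) = (1-x)\,\genjactw^{(a,b,c,d)}(x), \qquad \genjactw^{(a,b,c,d+1)}(x) = (1+x)\,\genjactw^{(a,b,c,d)}(x),
$$
and that both $(1-x)$ and $(1+x)$ are strictly positive on $(\alpha,\beta)$, since $(\alpha,\beta)\subset(0,1)$; hence these remain admissible (positive) weights on the interval. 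I would prove the first identity in detail, the second being word-for-word the same with the evaluation point $1$ replaced by $-1$.

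For the first identity, set $R_n(x) := \sum_{k=0}^n \genjact_k^{(a,b,c,d)}(1)\,\genjact_k^{(a,b,c,d)}(x)$, i.e.\ the degree-$n$ truncation of the Christoffel--Darboux kernel frozen at $x_0 = 1$. Step one: $R_n$ has degree \emph{exactly} $n$, because all zeros of the orthogonal polynomial $\genjact_n^{(a,b,c,d)}$ lie in the open interval $(\alpha,\beta)$ and $1 > \beta$, so $\genjact_n^{(a,b,c,d)}(1) \neq 0$ and the leading term of $R_n$ is contributed solely by the $k=n$ summand (so, in particular, $R_n \not\equiv 0$). Step two is the reproducing property of the kernel: by orthonormality of $\{\genjact_k^{(a,b,c,d)}\}$, for any polynomial $p$ with $\deg p \le n$ one has $\ip< R_n,\, p >_{\genjactw^{(a,b,c,d)}} = p(1)$. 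Step three is the key computation: for an arbitrary polynomial $q$ with $\deg q \le n-1$, the polynomial $(1-x)q(x)$ has degree $\le n$, so
$$
\int_\alpha^\beta R_n(x)\,q(x)\,\genjactw^{(a,b,c+1,d)}(x)\,\D x = \int_\alpha^\beta R_n(x)\,\bigl[(1-x)q(x)\bigr]\,\genjactw^{(a,b,c,d)}(x)\,\D x = \normgenjact^{(a,b,c,d)}\,\bigl[(1-1)\,q(1)\bigr] = 0 .
$$
Thus $R_n$ is a degree-$n$ polynomial orthogonal, with respect to the weight $\genjactw^{(a,b,c+1,d)}$, to every polynomial of strictly lower degree; since such a polynomial is unique up to scalar multiples, $R_n$ is a nonzero scalar multiple of $\genjact_n^{(a,b,c+1,d)}$, and writing the reciprocal of that scalar as $\mathcal{C}_n^{(a,b,c,d)}$ gives exactly (\ref{eqn:genjacexpansioni}). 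Replacing $1$ by $-1$ and $(1-x)$ by $(1+x)$ throughout yields (\ref{eqn:genjacexpansionii}) with a constant $\mathcal{D}_n^{(a,b,c,d)}$.

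There is no serious obstacle: this is the standard Christoffel construction, and the only two points needing a remark are (i) $\genjact_k^{(a,b,c,d)}(\pm 1) \neq 0$, which follows from the fact that the zeros of orthogonal polynomials for a positive weight lie inside the interval of orthogonality, and (ii) the bookkeeping of the normalisation factor $\normgenjact^{(a,b,c,d)}$ appearing in $\ip< \cdot,\cdot >_{\genjactw^{(a,b,c,d)}}$, which is harmless and only rescales the constants $\mathcal{C}_n^{(a,b,c,d)}$, $\mathcal{D}_n^{(a,b,c,d)}$. If an explicit value for these constants is desired one can rewrite $R_n$ in closed form using the Christoffel--Darboux formula \cite[18.2.12]{DLMF} and compare leading coefficients, but for the existence claim this is unnecessary.
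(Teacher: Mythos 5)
Your proof is correct, and it reaches the conclusion by a somewhat different route than the paper. Both arguments are built on the Christoffel--Darboux kernel frozen at the endpoint $\pm 1$, but you exploit its \emph{reproducing property}: you show $R_n(x)=\sum_{k=0}^n \genjact_k^{(a,b,c,d)}(\pm 1)\genjact_k^{(a,b,c,d)}(x)$ has exact degree $n$ (correctly noting $\genjact_n^{(a,b,c,d)}(\pm 1)\neq 0$ since the zeros lie in $(\alpha,\beta)\subset(0,1)$) and is orthogonal to all lower-degree polynomials under the modified weight, then invoke uniqueness up to scalars. The paper instead writes down the constants $\mathcal{C}_n^{(a,b,c,d)}$, $\mathcal{D}_n^{(a,b,c,d)}$ explicitly at the outset, substitutes the closed-form Christoffel--Darboux expression (\ref{eqn:christoffeldarboux}) so that the factor $(1\mp x)$ in $\genjactw^{(a,b,c\pm 1,d)}$ cancels the denominator, and verifies pairwise orthonormality of the candidate family directly. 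The practical difference is that the paper's computation delivers the explicit values (\ref{eqn:genjactnormalisationi})--(\ref{eqn:genjactnormalisationii}), which are then consumed by the following proposition to derive the recurrence coefficients of $\genjact_n^{(a,b,c+1,d)}$; your argument establishes existence more cleanly and with less bookkeeping, but, as you acknowledge, would require an extra leading-coefficient comparison to recover those constants. Your version is also slightly more careful on one point the paper leaves implicit, namely that the kernel section genuinely has degree $n$, which is what licenses the identification with $\genjact_n^{(a,b,c+1,d)}$ up to a nonzero scalar.
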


\begin{proof}
Fix $n,m \in \{0,1,\dots\}$ and without loss of generality, assume $m \le n$. First recall that
\begin{align*}
	\int_\alpha^\beta \: \genjact_n^{(a,b,c+1,d)}(x) \: \genjact_m^{(a,b,c+1,d)}(x) \: \genjactw^{(a,b,c+1,d)}(x) \: \D x &= \delta_{n,m} \: \normgenjact^{(a,b,c+1,d)}
\end{align*}
and define
\begin{align}
	\mathcal{C}_n^{(a,b,c,d)} &= \fpr(\frac{\normgenjact^{(a,b,c+1,d)}}{\normgenjact^{(a,b,c,d)} \: \genjact_n^{(a,b,c,d)}(1) \: \genjact_{n+1}^{(a,b,c,d)}(1) \: \tilde{\beta}_n^{(a,b,c,d)}})^\half, \label{eqn:genjactnormalisationi} \\
	\mathcal{D}_n^{(a,b,c,d)} &= (-1)^n \: \fpr(\frac{- \normgenjact^{(a,b,c,d+1)}}{\normgenjact^{(a,b,c,d)} \: \genjact_n^{(a,b,c,d)}(-1) \: \genjact_{n+1}^{(a,b,c,d)}(-1) \: \tilde{\beta}_n^{(a,b,c,d)}})^\half. \label{eqn:genjactnormalisationii}
\end{align}
Now, by the Christoffel--Darboux formula \cite[18.2.12]{DLMF}, we have that for any $x, y \in \R$,
\begin{align}
	\sum_{k=0}^n \: \genjact_{k}^{(a,b,c,d)}(y) \: \genjact_{k}^{(a,b,c,d)}(x) &= \tilde{\beta}_n^{(a,b,c,d)} \: \frac{\genjact_{n}^{(a,b,c,d)}(x) \: \genjact_{n+1}^{(a,b,c,d)}(y) - \genjact_{n+1}^{(a,b,c,d)}(x) \: \genjact_{n}^{(a,b,c,d)}(y)}{y - x}. \label{eqn:christoffeldarboux}
\end{align}
Then,
\begin{align*}
	&\int_\alpha^\beta \: \Big(\big[\mathcal{C}_n^{(a,b,c,d)} \: \sum_{k=0}^n \: \genjact_{k}^{(a,b,c,d)}(1) \: \genjact_{k}^{(a,b,c,d)}(x)\big] \\
	&\quad \quad \quad \cdot \big[\mathcal{C}_m^{(a,b,c,d)} \: \sum_{k=0}^m \: \genjact_{k}^{(a,b,c,d)}(1) \: \genjact_{k}^{(a,b,c,d)}(x)\big] \: \genjactw^{(a,b,c+1,d)}(x) \Big) \: \D x \\
	&= \mathcal{C}_n^{(a,b,c,d)} \: \mathcal{C}_m^{(a,b,c,d)} \: \tilde{\beta}_n^{(a,b,c,d)} \\
	&\quad \quad \cdot \sum_{k=0}^m \: \int_\alpha^\beta \: \Big(\genjact_{k}^{(a,b,c,d)}(1) \: \genjact_{k}^{(a,b,c,d)}(x) \\
	&\quad \quad \quad \quad \quad \quad \quad \cdot \big[\genjact_{n}^{(a,b,c,d)}(x) \: \genjact_{n+1}^{(a,b,c,d)}(1) - \genjact_{n+1}^{(a,b,c,d)}(x) \: \genjact_{n}^{(a,b,c,d)}(1)\big] \: \genjactw^{(a,b,c,d)}(x)\Big) \: \D x \\
	&= \delta_{m,n} \: {\mathcal{C}_n^{(a,b,c,d)}}^2 \: \tilde{\beta}_n^{(a,b,c,d)} \: \normgenjact^{(a,b,c,d)} \: \genjact_{n}^{(a,b,c,d)}(1) \: \genjact_{n+1}^{(a,b,c,d)}(1) \\
	&= \delta_{m,n} \: \normgenjact^{(a,b,c+1,d)}
\end{align*}
using (\ref{eqn:genjactnormalisationi}) and (\ref{eqn:christoffeldarboux}), showing that the RHS and LHS of (\ref{eqn:genjacexpansioni}) are equivalent. Further,
\begin{align*}
	&\int_\alpha^\beta \: \Big(\big[\mathcal{D}_n^{(a,b,c,d)} \: \sum_{k=0}^n \: \genjact_{k}^{(a,b,c,d)}(-1) \: \genjact_{k}^{(a,b,c,d)}(x)\big] \\
	&\quad \quad \quad \cdot \big[\mathcal{D}_m^{(a,b,c,d)} \: \sum_{k=0}^m \: \genjact_{k}^{(a,b,c,d)}(-1) \: \genjact_{k}^{(a,b,c,d)}(x)\big] \: \genjactw^{(a,b,c,d+1)}(x) \Big) \: \D x \\
	&= - \mathcal{D}_n^{(a,b,c,d)} \: \mathcal{D}_m^{(a,b,c,d)} \: \tilde{\beta}_n^{(a,b,c,d)} \\
	&\quad \quad \cdot \sum_{k=0}^m \: \int_\alpha^\beta \: \Big(\genjact_{k}^{(a,b,c,d)}(-1) \: \genjact_{k}^{(a,b,c,d)}(x) \\
	&\quad \quad \quad \quad \quad \quad \quad \cdot \big[\genjact_{n}^{(a,b,c,d)}(x) \: \genjact_{n+1}^{(a,b,c,d)}(-1) - \genjact_{n+1}^{(a,b,c,d)}(x) \: \genjact_{n}^{(a,b,c,d)}(-1)\big] \: \genjactw^{(a,b,c,d)}(x)\Big) \: \D x \\
	&= - \delta_{m,n} \: {\mathcal{D}_n^{(a,b,c,d)}}^2 \: \tilde{\beta}_n^{(a,b,c,d)} \: \normgenjact^{(a,b,c,d)} \: \genjact_{n}^{(a,b,c,d)}(-1) \: \genjact_{n+1}^{(a,b,c,d)}(-1) \\
	&= \delta_{m,n} \: \normgenjact^{(a,b,c,d+1)}
\end{align*}
using (\ref{eqn:genjactnormalisationii}) and (\ref{eqn:christoffeldarboux}), showing that the RHS and LHS of (\ref{eqn:genjacexpansionii}) are also equivalent. 
\end{proof}

\begin{proposition}
The recurrence coefficients for the OPs $\{\genjact_n^{(a,b,c+1,d)}\}$ are given by:
\begin{align}
	\tilde{\alpha}_n^{(a,b,c+1,d)} &= \frac{\genjact_{n+2}^{(a,b,c,d)}(1)}{\genjact_{n+1}^{(a,b,c,d)}(1)} \: \tilde{\beta}_{n+1}^{(a,b,c,d)} - \frac{\genjact_{n+1}^{(a,b,c,d)}(1)}{\genjact_{n}^{(a,b,c,d)}(1)} \: \tilde{\beta}_{n}^{(a,b,c,d)} + \tilde{\alpha}_{n+1}^{(a,b,c,d)}, \\
	\tilde{\beta}_n^{(a,b,c+1,d)} &= \frac{\mathcal{C}_n^{(a,b,c,d)}}{\mathcal{C}_{n+1}^{(a,b,c,d)}} \: \frac{\genjact_{n}^{(a,b,c,d)}(1)}{\genjact_{n+1}^{(a,b,c,d)}(1)} \: \tilde{\beta}_n^{(a,b,c,d)}.
\end{align}
The recurrence coefficients for the OPs $\{\genjact_n^{(a,b,c,d+1)}\}$ are given by:
\begin{align}
	\tilde{\alpha}_n^{(a,b,c,d+1)} &= \frac{\genjact_{n+2}^{(a,b,c,d)}(-1)}{\genjact_{n+1}^{(a,b,c,d)}(-1)} \: \tilde{\beta}_{n+1}^{(a,b,c,d)} - \frac{\genjact_{n+1}^{(a,b,c,d)}(-1)}{\genjact_{n}^{(a,b,c,d)}(-1)} \: \tilde{\beta}_{n}^{(a,b,c,d)} + \tilde{\alpha}_{n+1}^{(a,b,c,d)}, \\
	\tilde{\beta}_n^{(a,b,c,d+1)} &= \frac{\mathcal{D}_n^{(a,b,c,d)}}{\mathcal{D}_{n+1}^{(a,b,c,d)}} \: \frac{\genjact_{n}^{(a,b,c,d)}(-1)}{\genjact_{n+1}^{(a,b,c,d)}(-1)} \: \tilde{\beta}_n^{(a,b,c,d)}.
\end{align}
\end{proposition}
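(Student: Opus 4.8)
The plan is to leverage the two kernel expansions from the preceding proposition, equations (\ref{eqn:genjacexpansioni}) and (\ref{eqn:genjacexpansionii}), rewritten via the Christoffel--Darboux identity (\ref{eqn:christoffeldarboux}). For the $c\mapsto c+1$ case this produces the closed form
\begin{align*}
	\genjact_n^{(a,b,c+1,d)}(x) = \mathcal{C}_n^{(a,b,c,d)} \: \tilde{\beta}_n^{(a,b,c,d)} \: \frac{\genjact_n^{(a,b,c,d)}(x) \: \genjact_{n+1}^{(a,b,c,d)}(1) - \genjact_{n+1}^{(a,b,c,d)}(x) \: \genjact_n^{(a,b,c,d)}(1)}{1-x},
\end{align*}
and symmetrically for $d\mapsto d+1$ with the evaluation point $1$ replaced by $-1$ and $\mathcal{C}$ replaced by $\mathcal{D}$. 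I will carry out the argument for the $(c+1,d)$ case; the $(c,d+1)$ case is identical after these substitutions, together with the bookkeeping of the $(-1)^n$ factors sitting inside $\mathcal{D}_n^{(a,b,c,d)}$.

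For $\tilde{\beta}_n^{(a,b,c+1,d)}$ I would simply compare leading coefficients. Reading off the $k=n$ term of (\ref{eqn:genjacexpansioni}), the leading coefficient of $\genjact_n^{(a,b,c+1,d)}$ equals $\mathcal{C}_n^{(a,b,c,d)}\,\genjact_n^{(a,b,c,d)}(1)$ times the leading coefficient of $\genjact_n^{(a,b,c,d)}$. Since for any orthonormal family the recurrence coefficient $\tilde{\beta}_n$ is the ratio of the leading coefficients of $\genjact_n$ and $\genjact_{n+1}$, forming the ratio of the degree-$n$ and degree-$(n+1)$ leading coefficients immediately gives $\tilde{\beta}_n^{(a,b,c+1,d)} = \tfrac{\mathcal{C}_n^{(a,b,c,d)}}{\mathcal{C}_{n+1}^{(a,b,c,d)}}\,\tfrac{\genjact_n^{(a,b,c,d)}(1)}{\genjact_{n+1}^{(a,b,c,d)}(1)}\,\tilde{\beta}_n^{(a,b,c,d)}$.

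For $\tilde{\alpha}_n^{(a,b,c+1,d)}$ I would substitute the closed form above into the (as-yet-unknown) three-term recurrence for $\{\genjact_n^{(a,b,c+1,d)}\}$, multiply through by $(1-x)^2$ to clear the denominators on both sides, and then use the known three-term recurrence for $\{\genjact_n^{(a,b,c,d)}\}$ to rewrite each $(1-x)\genjact_m^{(a,b,c,d)}$ as a combination of $\genjact_{m-1}^{(a,b,c,d)},\genjact_m^{(a,b,c,d)},\genjact_{m+1}^{(a,b,c,d)}$. Both sides then become explicit expansions in the orthogonal basis $\{\genjact_m^{(a,b,c,d)}\}$; equating the coefficient of $\genjact_{n+2}^{(a,b,c,d)}$ re-derives the $\tilde{\beta}$ formula (a consistency check), and equating the coefficient of $\genjact_{n+1}^{(a,b,c,d)}$ yields the claimed identity for $\tilde{\alpha}_n^{(a,b,c+1,d)}$ after cancelling the common factor $\mathcal{C}_n^{(a,b,c,d)}\tilde{\beta}_n^{(a,b,c,d)}$ and simplifying with (\ref{eqn:genjactnormalisationi}). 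A shorter equivalent route is to match subleading coefficients in (\ref{eqn:genjacexpansioni}) (only the $k=n$ and $k=n-1$ terms contribute), which gives $\tilde{\alpha}_n^{(a,b,c+1,d)} = \tilde{\alpha}_n^{(a,b,c,d)} + \tilde{\beta}_{n-1}^{(a,b,c,d)}\genjact_{n-1}^{(a,b,c,d)}(1)/\genjact_n^{(a,b,c,d)}(1) - \tilde{\beta}_n^{(a,b,c,d)}\genjact_n^{(a,b,c,d)}(1)/\genjact_{n+1}^{(a,b,c,d)}(1)$, and then converting this to the stated form using the three-term recurrence for $\{\genjact_n^{(a,b,c,d)}\}$ evaluated at $x=1$ (which gives $\tilde{\beta}_n\genjact_{n+1}(1)/\genjact_n(1) + \tilde{\beta}_{n-1}\genjact_{n-1}(1)/\genjact_n(1) = 1 - \tilde{\alpha}_n$) applied at indices $n$ and $n+1$.

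The \emph{main obstacle} I anticipate is sign and normalisation bookkeeping rather than anything structural: the $\genjact$'s are orthonormal only up to sign, the constants $\mathcal{C}_n,\mathcal{D}_n$ are defined through square roots in (\ref{eqn:genjactnormalisationi})--(\ref{eqn:genjactnormalisationii}), and one must check that the leading-coefficient and endpoint-evaluation factors combine so that the final square-root-free rational formulas hold with the correct signs — this is where the explicit $(-1)^n$ in $\mathcal{D}_n^{(a,b,c,d)}$ earns its keep in the $(c,d+1)$ case. A secondary point worth stating explicitly is that the ``subleading coefficient'' shortcut and the ``substitute into the recurrence'' computation really do produce the same expression for $\tilde{\alpha}$; this equivalence is exactly the endpoint identity just quoted, so it should be recorded as part of the proof.
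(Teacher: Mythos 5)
Your proposal is correct, and while it uses the same key ingredients as the paper (the kernel expansions (\ref{eqn:genjacexpansioni})--(\ref{eqn:genjacexpansionii}), the Christoffel--Darboux closed form, and the constants $\mathcal{C}_n^{(a,b,c,d)}$, $\mathcal{D}_n^{(a,b,c,d)}$), the mechanism for extracting the coefficients is a genuine variant. The paper computes $(1-x)\,x\,\genjact_{n}^{(a,b,c+1,d)}(x)$ in two ways --- once applying the Christoffel--Darboux closed form first and then the $(a,b,c,d)$ recurrence to absorb the factor $x$, and once applying the as-yet-unknown $(a,b,c+1,d)$ recurrence first and then the closed form to absorb $(1-x)$ --- and equates the coefficients of $\genjact_{n+2}^{(a,b,c,d)}(x)$ and $\genjact_{n+1}^{(a,b,c,d)}(x)$; this is essentially your first route for $\tilde{\alpha}$ (one factor of $(1-x)$ already suffices to clear denominators, so your $(1-x)^2$ is slightly heavier but still valid). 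Your primary route --- reading the leading coefficient off the $k=n$ term of (\ref{eqn:genjacexpansioni}) to get $\tilde{\beta}_n^{(a,b,c+1,d)}$ as a one-line ratio, and the $x^{n-1}$ coefficient (to which only $k=n$ and $k=n-1$ contribute) to get $\tilde{\alpha}_n^{(a,b,c+1,d)}$ --- is more elementary and lands directly on the form recorded in the paper's subsequent Corollary, namely $\tilde{\alpha}_n^{(a,b,c+1,d)} = \tilde{\alpha}_n^{(a,b,c,d)} + \tilde{\beta}_{n-1}^{(a,b,c,d)}\,\genjact_{n-1}^{(a,b,c,d)}(1)/\genjact_{n}^{(a,b,c,d)}(1) - \tilde{\beta}_{n}^{(a,b,c,d)}\,\genjact_{n}^{(a,b,c,d)}(1)/\genjact_{n+1}^{(a,b,c,d)}(1)$; the conversion to the index-$(n+1)$ form stated in the Proposition via the three-term recurrence at $x=1$ is exactly as you describe, since both expressions reduce to $1 - \tilde{\beta}_n^{(a,b,c,d)}\bigl(\genjact_{n+1}^{(a,b,c,d)}(1)/\genjact_{n}^{(a,b,c,d)}(1) + \genjact_{n}^{(a,b,c,d)}(1)/\genjact_{n+1}^{(a,b,c,d)}(1)\bigr)$. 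What the paper's route buys is that the Proposition's form falls out without this extra conversion; what yours buys is a cheaper $\tilde{\beta}$ derivation and sign bookkeeping localised to checking $\mathcal{C}_n^{(a,b,c,d)}\genjact_{n}^{(a,b,c,d)}(1)>0$ (and the $(-1)^n$-corrected analogue for $\mathcal{D}_n^{(a,b,c,d)}$ at $x=-1$), which your closing paragraph correctly flags as the only delicate point.
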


\begin{proof}
First, using (\ref{eqn:genjacexpansioni}) and (\ref{eqn:christoffeldarboux}) we have that
\begin{align}
	&(1-x) \: x \: \genjact_{n}^{(a,b,c+1,d)}(x) \nonumber \\
	&= \mathcal{C}_n^{(a,b,c,d)} \: \tilde{\beta}_{n}^{(a,b,c,d)} \: x \: \Big[\genjact_{n}^{(a,b,c,d)}(x) \: \genjact_{n+1}^{(a,b,c,d)}(1) - \genjact_{n+1}^{(a,b,c,d)}(x) \: \genjact_{n}^{(a,b,c,d)}(1)\Big] \nonumber \\
	&= \mathcal{C}_n^{(a,b,c,d)} \: \tilde{\beta}_{n}^{(a,b,c,d)} \nonumber \\
	& \quad \quad \cdot \Big[\Big(\tilde{\beta}_{n}^{(a,b,c,d)} \: \genjact_{n+1}^{(a,b,c,d)}(x) + \tilde{\alpha}_{n}^{(a,b,c,d)} \: \genjact_{n}^{(a,b,c,d)}(x) + \tilde{\beta}_{n-1}^{(a,b,c,d)} \: \genjact_{n-1}^{(a,b,c,d)}(x)\Big) \: \genjact_{n+1}^{(a,b,c,d)}(1) \nonumber \\
	& \quad \quad \quad - \Big(\tilde{\beta}_{n+1}^{(a,b,c,d)} \: \genjact_{n+2}^{(a,b,c,d)}(x) + \tilde{\alpha}_{n+1}^{(a,b,c,d)} \: \genjact_{n+1}^{(a,b,c,d)}(x) + \tilde{\beta}_{n}^{(a,b,c,d)} \: \genjact_{n}^{(a,b,c,d)}(x)\Big) \: \genjact_{n}^{(a,b,c,d)}(1) \Big] \label{eqn:genjactproofa}
\end{align}
Next, note that the recurrence coefficients for $\genjact_{n}^{(a,b,c+1,d)}(x)$ satisfy
\begin{align}
	&(1-x) \: x \: \genjact_{n}^{(a,b,c+1,d)}(x) \nonumber \\
	&= (1-x) \: \Big[\tilde{\beta}_{n}^{(a,b,c+1,d)} \: \genjact_{n+1}^{(a,b,c+1,d)}(x) + \tilde{\alpha}_{n}^{(a,b,c+1,d)} \: \genjact_{n}^{(a,b,c+1,d)}(x) + \tilde{\beta}_{n-1}^{(a,b,c+1,d)} \: \genjact_{n-1}^{(a,b,c+1,d)}(x)\Big] \nonumber \\
	&= \mathcal{C}_{n+1}^{(a,b,c,d)} \: \tilde{\beta}_{n}^{(a,b,c+1,d)} \: \tilde{\beta}_{n+1}^{(a,b,c,d)} \: \Big( \genjact_{n+1}^{(a,b,c,d)}(x) \: \genjact_{n+2}^{(a,b,c,d)}(1) - \genjact_{n+2}^{(a,b,c,d)}(x) \: \genjact_{n+1}^{(a,b,c,d)}(1) \Big) \nonumber \\
	& \quad \quad + \mathcal{C}_n^{(a,b,c,d)} \: \tilde{\alpha}_{n}^{(a,b,c+1,d)} \: \tilde{\beta}_{n}^{(a,b,c,d)} \: \Big( \genjact_{n}^{(a,b,c,d)}(x) \: \genjact_{n+1}^{(a,b,c,d)}(1) - \genjact_{n+1}^{(a,b,c,d)}(x) \: \genjact_{n}^{(a,b,c,d)}(1) \Big) \nonumber \\
	& \quad \quad + \mathcal{C}_{n-1}^{(a,b,c,d)} \: \tilde{\beta}_{n-1}^{(a,b,c+1,d)} \: \tilde{\beta}_{n-1}^{(a,b,c,d)} \: \Big( \genjact_{n-1}^{(a,b,c,d)}(x) \: \genjact_{n}^{(a,b,c,d)}(1) - \genjact_{n}^{(a,b,c,d)}(x) \: \genjact_{n-1}^{(a,b,c,d)}(1) \Big) \label{eqn:genjactproofb}
\end{align}
We can set $\tilde{\beta}_{-1}^{(a,b,c+1,d)} = 0$. By comparing coefficients of $\genjact_{n+2}^{(a,b,c,d)}(x)$ and $\genjact_{n+1}^{(a,b,c,d)}(x)$ in both (\ref{eqn:genjactproofa}) and (\ref{eqn:genjactproofb}) we obtain the desired recurrence coefficients for the OP $\genjact_{n}^{(a,b,c+1,d)}(x)$. The recurrence coefficients for the OPs $\genjact_{n}^{(a,b,c,d+1)}(x)$ are found similarly.
\end{proof}

\begin{corollary}
The recurrence coefficients for the OPs $\{\genjact_n^{(a,b,c+1,d)}\}$ can be written as:
\begin{align}
	\tilde{\alpha}_n^{(a,b,c+1,d)} &= \frac{\tilde{\beta}_{n-1}^{(a,b,c,d)}}{\chi_{n-1}^{(a,b,c,d)}(1)} - \frac{\tilde{\beta}_{n}^{(a,b,c,d)}}{\chi_{n}^{(a,b,c,d)}(1)} + \tilde{\alpha}_{n}^{(a,b,c,d)}, \\
	\tilde{\beta}_n^{(a,b,c+1,d)} &= \fpr(\frac{1 - \tilde{\alpha}_{n+1}^{(a,b,c,d)} - \frac{\tilde{\beta}_n^{(a,b,c,d)}}{\chi_n^{(a,b,c,d)}(1)}}{1 - \tilde{\alpha}_{n}^{(a,b,c,d)} - \frac{\tilde{\beta}_{n-1}^{(a,b,c,d)}}{\chi_{n-1}^{(a,b,c,d)}(1)}})^\half \tilde{\beta}_n^{(a,b,c,d)}.
\end{align}
The recurrence coefficients for the OPs $\{\genjact_n^{(a,b,c,d+1)}\}$ can be written as:
\begin{align}
	\tilde{\alpha}_n^{(a,b,c,d+1)} &= \frac{\tilde{\beta}_{n-1}^{(a,b,c,d)}}{\chi_{n-1}^{(a,b,c,d)}(-1)} - \frac{\tilde{\beta}_{n}^{(a,b,c,d)}}{\chi_{n}^{(a,b,c,d)}(-1)} + \tilde{\alpha}_{n}^{(a,b,c,d)}, \\
	\tilde{\beta}_n^{(a,b,c,d+1)} &= \fpr(\frac{- 1 + \tilde{\alpha}_{n+1}^{(a,b,c,d)} + \frac{\tilde{\beta}_n^{(a,b,c,d)}}{\chi_n^{(a,b,c,d)}(-1)}}{- 1 + \tilde{\alpha}_{n}^{(a,b,c,d)} + \frac{\tilde{\beta}_{n-1}^{(a,b,c,d)}}{\chi_{n-1}^{(a,b,c,d)}(-1)}})^\half \tilde{\beta}_n^{(a,b,c,d)}.
\end{align}
where 
\begin{align}
	\chi_{n}^{(a,b,c,d)}(y) &:= \frac{\genjact_{n+1}^{(a,b,c,d)}(y)}{\genjact_{n}^{(a,b,c,d)}(y)} \\
	&= \frac{1}{\tilde{\beta}_{n}^{(a,b,c,d)}} \: \fpr(y - \tilde{\alpha}_{n}^{(a,b,c,d)} - \frac{\tilde{\beta}_{n-1}^{(a,b,c,d)}}{\chi_{n-1}^{(a,b,c,d)}(y)}), \quad y \in \{-1,1\}.
\end{align}
\end{corollary}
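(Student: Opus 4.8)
The plan is to take the closed forms for $\tilde\alpha_n^{(a,b,c+1,d)}$, $\tilde\beta_n^{(a,b,c+1,d)}$, $\tilde\alpha_n^{(a,b,c,d+1)}$ and $\tilde\beta_n^{(a,b,c,d+1)}$ produced in the preceding Proposition and re-express them purely in terms of the ratio $\chi_n^{(a,b,c,d)}(y) := \genjact_{n+1}^{(a,b,c,d)}(y)/\genjact_{n}^{(a,b,c,d)}(y)$. The first step is to establish the Riccati-type identity for this ratio: dividing the three-term recurrence for $\{\genjact_n^{(a,b,c,d)}\}$ through by $\genjact_n^{(a,b,c,d)}(y)$ and rearranging gives $\tilde\beta_n^{(a,b,c,d)}\,\chi_n^{(a,b,c,d)}(y) = y - \tilde\alpha_n^{(a,b,c,d)} - \tilde\beta_{n-1}^{(a,b,c,d)}/\chi_{n-1}^{(a,b,c,d)}(y)$, which is the final displayed formula of the statement. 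Evaluation at $y = \pm 1$ is legitimate because $\pm 1 \notin (\alpha,\beta) \subset (0,1)$, so the zeros of $\genjact_n^{(a,b,c,d)}$ (which lie in $(\alpha,\beta)$) never make $\genjact_n^{(a,b,c,d)}(\pm1)$ or $\chi_n^{(a,b,c,d)}(\pm1)$ vanish.

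The $\tilde\alpha$ identities then fall out directly. In the Proposition's formula for $\tilde\alpha_n^{(a,b,c+1,d)}$ I rewrite $\genjact_{n+2}^{(a,b,c,d)}(1)/\genjact_{n+1}^{(a,b,c,d)}(1) = \chi_{n+1}^{(a,b,c,d)}(1)$ and $\genjact_{n+1}^{(a,b,c,d)}(1)/\genjact_{n}^{(a,b,c,d)}(1) = \chi_n^{(a,b,c,d)}(1)$, then replace the products $\chi_{n+1}^{(a,b,c,d)}(1)\,\tilde\beta_{n+1}^{(a,b,c,d)}$ and $\chi_n^{(a,b,c,d)}(1)\,\tilde\beta_n^{(a,b,c,d)}$ using the Riccati identity at $y=1$; the two copies of $\tilde\alpha_{n+1}^{(a,b,c,d)}$ cancel and the stated expression remains. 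The $y=-1$ evaluation gives $\tilde\alpha_n^{(a,b,c,d+1)}$ verbatim.

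For the $\tilde\beta$ identities I first clear the normalisation constants. From (\ref{eqn:genjactnormalisationi}) the $\normgenjact$ factors cancel in the ratio $\mathcal{C}_n^{(a,b,c,d)}/\mathcal{C}_{n+1}^{(a,b,c,d)}$, leaving $(\mathcal{C}_n^{(a,b,c,d)}/\mathcal{C}_{n+1}^{(a,b,c,d)})^2 = \genjact_{n+2}^{(a,b,c,d)}(1)\,\tilde\beta_{n+1}^{(a,b,c,d)} \big/ \big(\genjact_n^{(a,b,c,d)}(1)\,\tilde\beta_n^{(a,b,c,d)}\big)$. Substituting this into the Proposition's formula for $\tilde\beta_n^{(a,b,c+1,d)}$ and converting the surviving $\genjact^{(a,b,c,d)}$-ratios to $\chi^{(a,b,c,d)}$'s collapses it to $\tilde\beta_n^{(a,b,c+1,d)} = \big(\chi_{n+1}^{(a,b,c,d)}(1)\,\tilde\beta_{n+1}^{(a,b,c,d)} \big/ (\chi_n^{(a,b,c,d)}(1)\,\tilde\beta_n^{(a,b,c,d)})\big)^{\half}\,\tilde\beta_n^{(a,b,c,d)}$, after which the Riccati identity at $y=1$ turns the numerator into $1 - \tilde\alpha_{n+1}^{(a,b,c,d)} - \tilde\beta_n^{(a,b,c,d)}/\chi_n^{(a,b,c,d)}(1)$ and the denominator into $1 - \tilde\alpha_{n}^{(a,b,c,d)} - \tilde\beta_{n-1}^{(a,b,c,d)}/\chi_{n-1}^{(a,b,c,d)}(1)$. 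The $d+1$ case follows the same route starting from (\ref{eqn:genjactnormalisationii}).

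I expect the main obstacle to be the sign bookkeeping in the $d+1$ formulas. At $y=-1$ the values $\genjact_n^{(a,b,c,d)}(-1)$ alternate in sign, $\chi_n^{(a,b,c,d)}(-1)$ is negative, and the definition (\ref{eqn:genjactnormalisationii}) of $\mathcal{D}_n^{(a,b,c,d)}$ carries both an explicit factor $(-1)^n$ and a sign inside its square root; one has to verify that these combine so that the argument of every square root stays positive and that $\tilde\beta_n^{(a,b,c,d+1)}$ emerges with the sign matching the stated right-hand side. Once that is checked, everything reduces to the same routine cancellation as in the $c+1$ case.
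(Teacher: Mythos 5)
Your strategy is the right one and, for three of the four formulas, your computation closes completely: the paper states this Corollary without proof, and the intended argument is exactly what you describe --- divide the three-term recurrence by $\genjact_{n}^{(a,b,c,d)}(y)$ to obtain the continued-fraction identity for $\chi_n^{(a,b,c,d)}(y)$, then substitute it, together with
$\bigl(\mathcal{C}_n^{(a,b,c,d)}/\mathcal{C}_{n+1}^{(a,b,c,d)}\bigr)^2 = \genjact_{n+2}^{(a,b,c,d)}(1)\,\tilde\beta_{n+1}^{(a,b,c,d)}\big/\bigl(\genjact_{n}^{(a,b,c,d)}(1)\,\tilde\beta_{n}^{(a,b,c,d)}\bigr)$,
into the Proposition. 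Your justification for evaluating at $\pm1$ (all zeros lie in $(\alpha,\beta)\subset(0,1)$) is also what supplies the sign information: $\genjact_{n}^{(a,b,c,d)}(1)>0$ while $\genjact_{n}^{(a,b,c,d)}(-1)$ has sign $(-1)^n$, so $\chi_n(1)>0$, $\chi_n(-1)<0$, every square-root argument is positive, and the positive root is the correct branch in each case. The $c+1$ pair and the $\tilde\alpha_n^{(a,b,c,d+1)}$ formula then come out exactly as printed.

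The one step you defer --- ``verify that $\tilde\beta_n^{(a,b,c,d+1)}$ emerges with the sign matching the stated right-hand side'' --- is precisely where the statement does not survive the computation. Carrying out your own plan: $(\mathcal{D}_n^{(a,b,c,d)}/\mathcal{D}_{n+1}^{(a,b,c,d)})\cdot\genjact_{n}^{(a,b,c,d)}(-1)/\genjact_{n+1}^{(a,b,c,d)}(-1)$ is positive with square equal to $\chi_{n+1}^{(a,b,c,d)}(-1)\,\tilde\beta_{n+1}^{(a,b,c,d)}\big/\bigl(\chi_{n}^{(a,b,c,d)}(-1)\,\tilde\beta_{n}^{(a,b,c,d)}\bigr)$, and the continued-fraction identity at $y=-1$ reads $\tilde\beta_n\chi_n(-1)=-1-\tilde\alpha_n-\tilde\beta_{n-1}/\chi_{n-1}(-1)$. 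Hence
\begin{align*}
\tilde\beta_n^{(a,b,c,d+1)} = \fpr(\frac{-1-\tilde\alpha_{n+1}^{(a,b,c,d)}-\frac{\tilde\beta_n^{(a,b,c,d)}}{\chi_n^{(a,b,c,d)}(-1)}}{-1-\tilde\alpha_{n}^{(a,b,c,d)}-\frac{\tilde\beta_{n-1}^{(a,b,c,d)}}{\chi_{n-1}^{(a,b,c,d)}(-1)}})^{\half}\tilde\beta_n^{(a,b,c,d)},
\end{align*}
with minus signs on the $\tilde\alpha$ and $\tilde\beta/\chi$ terms, not the plus signs printed in the Corollary; the two expressions are not equal in general. (The $\tilde\alpha_n^{(a,b,c,d+1)}$ formula is unaffected because those extra signs cancel in the difference.) So finish your proof with this corrected display --- the printed right-hand side appears to be a sign typo --- rather than expecting further sign bookkeeping to reconcile your otherwise correct derivation with the statement as written.
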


These two propositions allow us to recursively obtain the recurrence coefficients for the OPs $\{\genjac_{n-k}^{(a,b,2c+2k + 1)}\}$ as $k$ increases to be large. 

\noindent{\bf Remark}:
The Corollary demonstrates that in order to obtain the recurrence coefficients $\{\alpha_{m}^{(a,b,2c+2k+1)}\}$, $\{\beta_{m}^{(a,b,2c+2k+1)}\}$ for some $m$ and $k$, we require that we obtain the recurrence coefficients $\{\alpha_{m+2}^{(a,b,2c+2(k-1)+1)}\}$, $\{\beta_{m+2}^{(a,b,2c+2(k-1)+1)}\}$. Thus, for large $N$, this recursive method of obtaining the recurrence coefficients requires a large initialisation (i.e. using the Lanczos algorithm to compute the recurrence coefficients $\{\alpha_{n}^{(a,b,2c+1)}\}$, $\{\beta_{n}^{(a,b,2c+1)}\}$ -- however, we only need to compute these once, and can store and save this initialisation to disk once computed, for the given values of $a, b, c$). 

\subsection{Quadrature rule on the disk-slice}

In this section we construct a quadrature rule exact for polynomials in the disk-slice $\Omega$ that can be used to expand functions in $\hdopnkabc(x,y)$ when $\Omega$ is a disk-slice.

\begin{theorem}
Denote the  Gauss quadrature nodes and weight on $[\alpha,\beta]$ with weight $(\beta - s)^a \: (s - \alpha)^b \: \rho(s)^{2c+1}$ as $(s_k,w_k^{(s)})$ , and
 on \([-1,1]\) with weight \((1-t^2)^c\) as $(t_k,w_k^{(t)})$. Define
\begin{align*}
	x_{i+(j-1)N} &:= s_j, \quad i,j = 1,\dots, \ceil{\frac{N+1}{2}}, \\
	y_{i+(j-1)N} &:= \rho(s_j) \: t_i, \quad i,j = 1,\dots, \ceil{\frac{N+1}{2}}, \\
	w_{i+(j-1)N} &:= w_j^{(s)} w_i^{(t)}, \quad  i,j = 1,\dots, \ceil{\frac{N+1}{2}}.
\end{align*}
Let $f(x,y)$ be a polynomial on $\Omega$. The quadrature rule is then
$$
\iint_\Omega f(x,y) \: \Wab(x,y) \: \D A \approx \half \sum_{j=1}^{M} w_j \: \big[ f(x_j, y_j) + f(x_j, -y_j) \big],
$$
where $M = \ceil{\half(N+1)}^2$, and the quadrature rule is exact if $f(x,y)$ is a polynomial of degree $\le N$.
\end{theorem}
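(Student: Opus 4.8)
The plan is to push the two-dimensional integral through the substitution $t = y/\rho(x)$ (the same change of variables used to establish orthogonality in \secref{OPs}), reducing it to an iterated one-dimensional integral, and then apply Gauss quadrature in each variable separately. Setting $y = \rho(x)\,t$, $\D y = \rho(x)\,\D t$, and using $1 - x^2 - y^2 = (1-x^2)(1-t^2) = \rho(x)^2(1-t^2)$, the integral $\iint_\Omega f(x,y)\,\Wabc(x,y)\,\D A$, with $\Wabc(x,y) = (\beta-x)^a(x-\alpha)^b(1-x^2-y^2)^c$, becomes
\begin{align*}
\int_\alpha^\beta \left(\int_{-1}^1 f\big(x,\rho(x)\,t\big)\,(1-t^2)^c\,\D t\right)(\beta-x)^a\,(x-\alpha)^b\,\rho(x)^{2c+1}\,\D x,
\end{align*}
so that the inner weight $(1-t^2)^c$ is exactly the one attached to $(t_k,w_k^{(t)})$ and the outer weight $(\beta-x)^a(x-\alpha)^b\rho(x)^{2c+1}$ is exactly the one attached to $(s_k,w_k^{(s)})$.

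The only non-routine point is that $f(x,\rho(x)t)$ is not a polynomial in $x$, since $\rho(x) = (1-x^2)^{1/2}$; this is resolved by a parity argument. Writing $f(x,y) = \sum_{p+q\le N} c_{p,q}\,x^p y^q$, the terms of $f(x,\rho(x)t)$ with $q$ odd contain an odd power of $t$ and therefore integrate to zero against the even weight $(1-t^2)^c$ over $[-1,1]$; equivalently, the even-in-$y$ part
\begin{align*}
A(x,t) := \tfrac12\big[f(x,\rho(x)t) + f(x,-\rho(x)t)\big] = \sum_{\substack{p+q\le N \\ q \text{ even}}} c_{p,q}\,x^p\,(1-x^2)^{q/2}\,t^q
\end{align*}
is a genuine bivariate polynomial, of degree $\le N$ in $t$ and, since $\deg_x\big(x^p(1-x^2)^{q/2}\big) = p+q \le N$, of degree $\le N$ in $x$ as well. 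This is precisely why the quadrature rule symmetrizes $f(x_j,y_j)$ with $f(x_j,-y_j)$: it replaces $f$ by $A$ while leaving the integral unchanged, because $\int_{-1}^1 f(x,\rho(x)t)(1-t^2)^c\,\D t = \int_{-1}^1 A(x,t)(1-t^2)^c\,\D t$.

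It then remains to invoke exactness of the two Gauss quadratures. With $Q := \ceil{\frac{N+1}{2}}$ nodes, each rule is exact for polynomials of degree $\le 2Q-1$, and $2\ceil{\frac{N+1}{2}}-1 \ge N$ for every $N$ (checking the even and odd cases separately). Hence the inner integral equals $\sum_{i=1}^Q w_i^{(t)} A(x,t_i)$ exactly; this is a polynomial in $x$ of degree $\le N$, so the outer integral against $(\beta-x)^a(x-\alpha)^b\rho(x)^{2c+1}$ equals $\sum_{j=1}^Q w_j^{(s)}\big(\sum_{i=1}^Q w_i^{(t)} A(s_j,t_i)\big)$ exactly. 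Expanding $A(s_j,t_i) = \tfrac12[f(s_j,\rho(s_j)t_i) + f(s_j,-\rho(s_j)t_i)]$ and re-indexing the double sum over $i,j\in\{1,\dots,Q\}$ by $m = i + (j-1)N$, with $x_m = s_j$, $y_m = \rho(s_j)t_i$, $w_m = w_j^{(s)}w_i^{(t)}$ and $M = Q^2$, reproduces the stated formula, now with equality rather than mere approximation. The main obstacle is the parity reduction above; once $f$ is replaced by $A$, the rest is the standard fact that a tensor product of one-dimensional Gauss rules integrates bivariate polynomials of the appropriate total degree exactly.
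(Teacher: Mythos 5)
Your proposal is correct and follows essentially the same route as the paper: the substitution $y=\rho(x)t$ to separate the weight, the even/odd decomposition in $y$ so that the symmetrized integrand becomes a genuine bivariate polynomial of degree $\le N$ in each variable, and exactness of the tensor product of the two Gauss rules with $\ceil{(N+1)/2}$ nodes each. If anything, your explicit degree count $\deg_x\big(x^p(1-x^2)^{q/2}\big)=p+q\le N$ is more careful than the paper's justification of the corresponding step for the disk-slice case.
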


\begin{proof}
We will use the substitution that
\begin{align*}
	x &= s, \quad y = \rho(s) \: t.
\end{align*}
First, note that, for $(x,y) \in \Omega$,
\begin{align*}
	\Wabc(x,y) &= \genjacw^{(a,b,2c)}(x) \: \jacw^{(c)}\fpr(\frac{y}{\rho(x)}) \\
	&= \genjacw^{(a,b,c2)}(s) \: \jacw^{(c)}(t) \\
	&=: V^{(a,b,c)}(s,t), \quad \text{for } (s,t) \in [\alpha,\beta] \times [-1,1].
\end{align*}

Let $f : \Omega \to \R$. Define the functions $f_e, f_o : \Omega \to \R$ by 
\begin{align*}
	f_e(x,y) &:= \half \Big(f(x, y) + f(x, -y)\Big), \quad \forall (x,y) \in \Omega\\
	f_o(x,y) &:= \half \Big(f(x, y) - f(x, -y)\Big), \quad \forall (x,y) \in \Omega
\end{align*}
so that $y \mapsto f_e(x,y)$ for fixed $x$ is an even function, and $y \mapsto f_o(x,y)$ for fixed $x$ is an odd function. Note that if $f$ is a polynomial, then $f_e(s, \rho(s)t)$ is a polynomial in $s \in [\alpha,\beta]$ for fixed $t$. 

Now, we have that
\begin{align*}
	\iint_\Omega f_e(x,y) \: \Wabc(x,y) \: \D y \: \D x &= \int_\alpha^\beta \int_{-1}^1 f_e\big(s,\rho(s) t\big) \: V^{(a,b,c)}(s,t) \: \rho(s) \: \D t \: \D s \\
	&= \int_\alpha^\beta \genjacw^{(a,b,2c+1)}(s) \: \Big( \int_{-1}^1 f_e\big(s,\rho(s) t\big) \: \jacw^{(c)}(t) \: \D t \Big) \: \D s \\
	&\approx \int_\alpha^\beta \genjacw^{(a,b,2c+1)}(s) \: \sum_{i=1}^{M_2} \Big( w_i^{(t)} f_e\big(s,\rho(s) t_i\big) \Big) \: \D s \quad (\star) \\
	&\approx \sum_{j=1}^{M_1} \Bigg( w_j^{(s)} \: \sum_{i=1}^{M_2} \Big( w_i^{(t)} f_e\big(s_j,\rho(s_j) t_i\big) \Big) \Bigg) \quad (\star \star) \\
	&= \sum_{k=1}^{M_1 M_2}  w_k \: f_e(x_k, y_k).
\end{align*}
Suppose $f$ is a polynomial in $x$ and $y$ of degree $N$, and hence that $f_e$ is a degree $\le N$ polynomial. First, note that the degree of the polynomial given by $x \mapsto f_e(x,y)$ for fixed $y$ is $\le N$ and the degree of the polynomial given by $y \mapsto f_e(x,y)$ for fixed $x$ is $\le N$. Also note that $s \mapsto f_e\big(s,\rho(s) t\big)$ for fixed $t$ is then a degree $N$ polynomial (since $\rho$ is a degree $1$ polynomial). Hence, we achieve equality at $(\star)$ if $2M_2 - 1 \ge N$ and we achieve equality at $(\star \star)$ if also $2M_1 - 1 \ge N$.

Next, note that 
\begin{align*}
	\iint_\Omega f_o(x,y) \: \Wabc(x,y) \: \D y \: \D x &= \int_\alpha^\beta \int_{-1}^1 f_o\big(s,\rho(s) t\big) \: V^{(a,b,c)}(s,t) \: \rho(s) \: \D t \: \D s \\
	&= \int_\alpha^\beta  \genjacw^{(a,b,2c+1)}(s) \: \Big( \int_{-1}^1 f_o\big(s,\rho(s) t\big) \: \jacw^{(c)}(t) \: \D t \Big) \: \D s \quad (\dagger) \\
	&= 0
\end{align*}
since the inner integral at $(\dagger)$ over $t$ is zero, due to the symmetry over the domain.

Hence, for a polynomial $f$ in $x$ and $y$ of degree $N$,
\begin{align*}
	\iint_\Omega f(x,y) \: \Wabc(x,y) \: \D y \: \D x &= \iint_\Omega \Big(f_e(x,y) + f_o(x,y)\Big) \: \Wabc(x,y) \: \D y \: \D x \\
	&= \iint_\Omega f_e(x,y) \: \Wabc(x,y) \: \D y \: \D x \\
	&= \sum_{j=1}^{M}  w_j \: f_e(x_j, y_j),
\end{align*}
where $M = \ceil{\half(N+1)}^2$.
\end{proof}

\subsection{Obtaining the coefficients for expansion of a function on the disk-slice}

Fix \(a,b,c \in \R\). Then for any function \(f : \Omega \to \R\) we can express \(f\) by
\begin{align*}
	f(x,y) \approx \sum_{n=0}^N \bighdop_n^{(a,b,c)}(x,y)^\top \: \mathbf{f}_n
\end{align*}
for N sufficiently large, where
\begin{align*}
	\bighdop^{(a,b,c)}_n(x,y) &:= \begin{pmatrix}
		\hdop^{(a,b,c)}_{n,0}(x,y) \\
		\vdots \\
		\hdop^{(a,b,c)}_{n,n}(x,y)
	\end{pmatrix} \in \R^{n+1} \quad \forall n = 0,1,2,\dots,N,
\end{align*}
and where
\begin{align*}
	\mathbf{f}_n &:= \begin{pmatrix}
		f_{n,0} \\
		\vdots \\
		f_{n,n}
	\end{pmatrix} \in \R^{n+1} \quad \forall n = 0,1,2,\dots,N, \quad
	f_{n,k} := \frac{\ip< f, \: \hdopnk^{(a,b,c)}>_{\Wabc}}{\norm{\hdopnk^{(a,b,c)}}_{\Wabc}}
\end{align*}
Recall from (\ref{eqn:normhdop}) that $\norm{\hdopnkabc}^2_\Wabc = \normgenjac^{(a,b,2c+2k+1)} \: \normjac^{(c)}$. Using the quadrature rule detailed in Section 4.2 for the inner product, we can calculate the coefficients $f_{n,k}$ for each $n = 0,\dots,N$, $k = 0,\dots,n$: 
\begin{align*}
	f_{n,k} &= \frac{1}{2 \: \normgenjac^{(a,b,2c+2k+1)} \: \normjac^{(c)}} \: \sum_{j=1}^{M} w_j \: \big[ f(x_j, y_j) \: \hdopnkabc(x_j, y_j) +f(x_j, -y_j) \: \hdopnkabc(x_j, -y_j) \big]
\end{align*}
where $M = \ceil{\half(N+1)}^2$.

\subsection{Calculating non-zero entries of the operator matrices}\label{subsection:Computation-operatormatrices}

The proofs of Theorem \ref{theorem:sparsityofdifferentialoperators} and Lemma \ref{lemma:sparsityofparametertransformationoperators} provide a way to calculate the non-zero entries of the operator matrices given in Definition \ref{def:differentialoperators} and Definition \ref{def:parametertransformationoperators}. We can simply use quadrature to calculate the 1D inner products, which has a complexity of $\bigO(N^3)$. This proves much cheaper computationally than using the 2D quadrature rule to calculate the 2D inner products, which has a complexity of $\bigO(N^4)$.

\section{Examples on the disk-slice with zero Dirichlet conditions}\label{Section:Examples}

We now demonstrate how the sparse linear systems constructed as above can be used to efficiently solve PDEs with zero Dirichlet conditions. We consider Poisson, inhomogeneous variable coefficient Helmholtz equation and the Biharmonic equation, demonstrating the versatility of the approach. 

\subsection{Poisson}

\begin{figure}[t]
	\begin{subfigure}{0.3\textwidth}
	\includegraphics[scale=0.3]{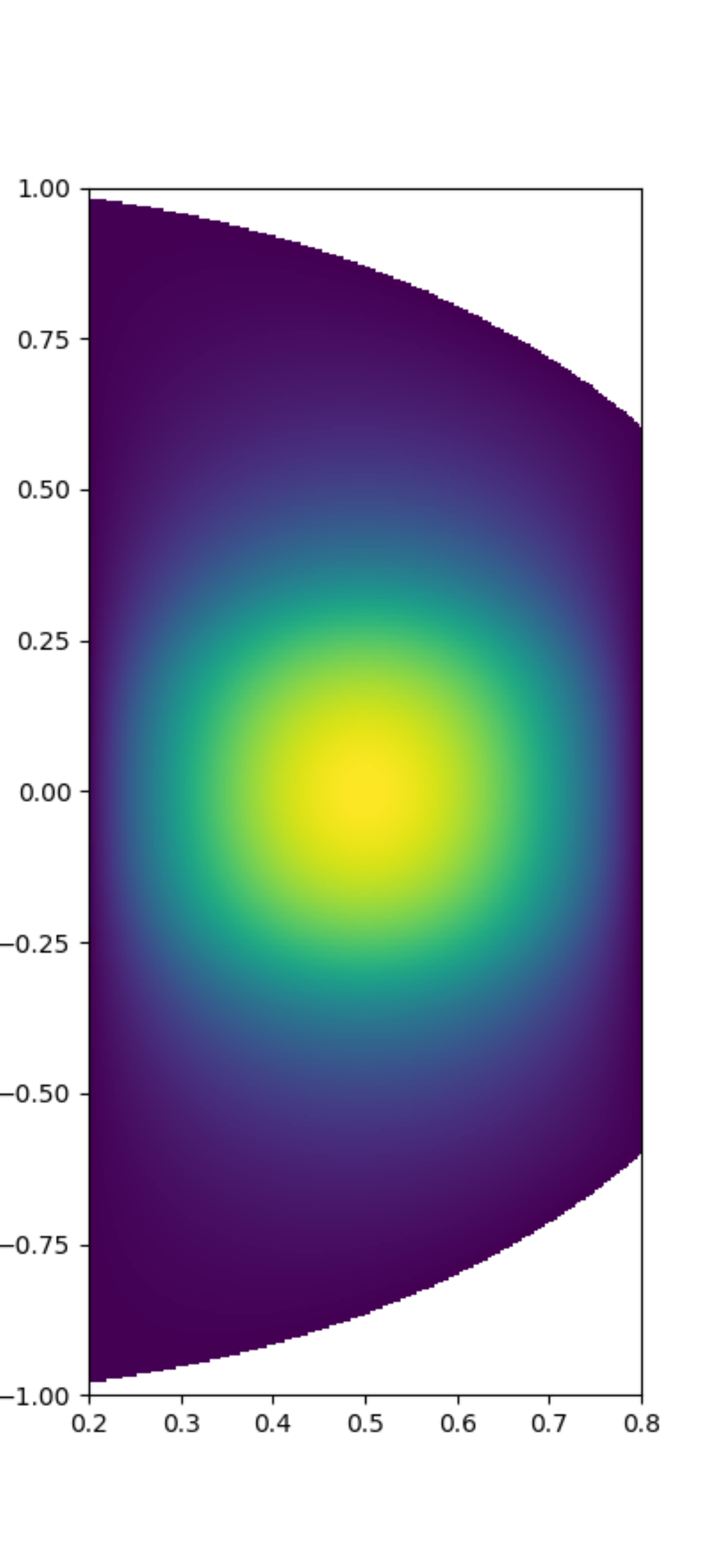}
	\centering
	\end{subfigure}
	\begin{subfigure}{0.5\textwidth}
	\centering
	\includegraphics[scale=0.5]{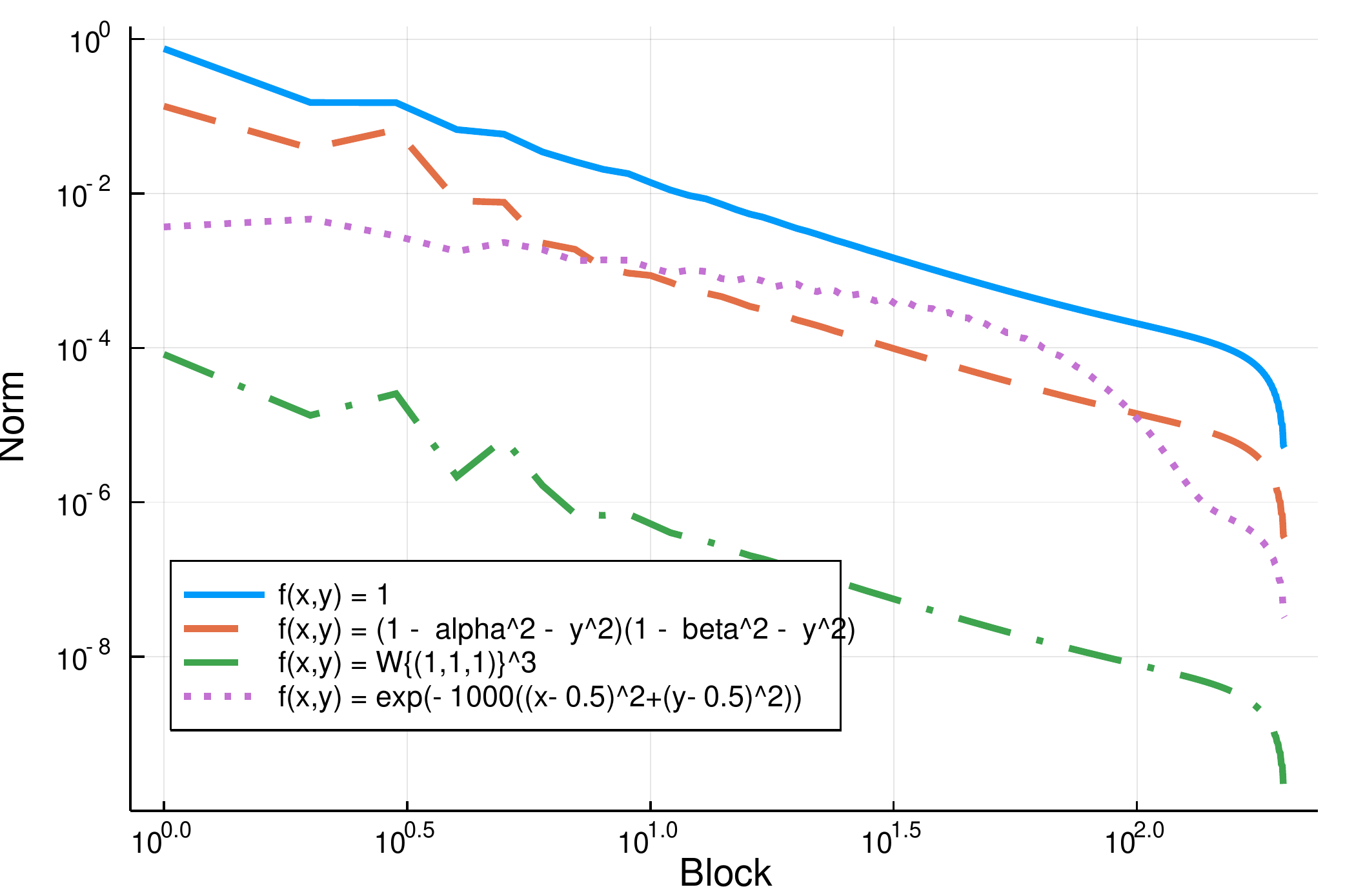}
	\end{subfigure}
	\caption{Left: The computed solution to $\Delta u = f$ with zero boundary conditions with $f(x,y) = 1 + \text{erf}(5(1 - 10((x - 0.5)^2 + y^2)))$. Right: The norms of each block of the computed solution of the Poisson equation with the given right hand side functions. This demonstrates algebraic convergence with the rate dictated by the decay at the corners, with spectral convergence observed when the right-hand side vanishes to all orders.}
	\centering
	\label{fig:poisson}
\end{figure}

\begin{figure}[t]
	\begin{subfigure}{0.3\textwidth}
	\includegraphics[scale=0.3]{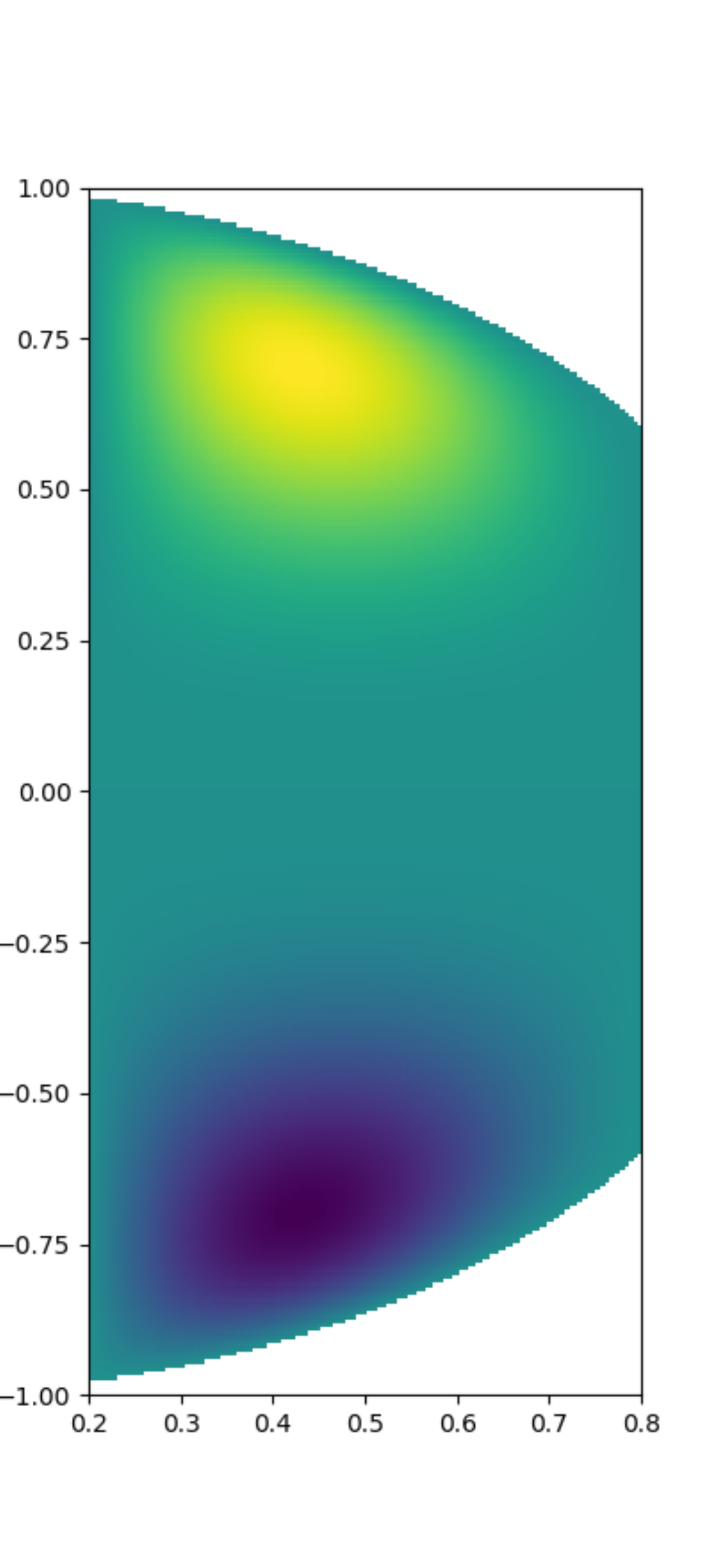}
	\centering
	\end{subfigure}
	\begin{subfigure}{0.3\textwidth}
	\centering
	\includegraphics[scale=0.3]{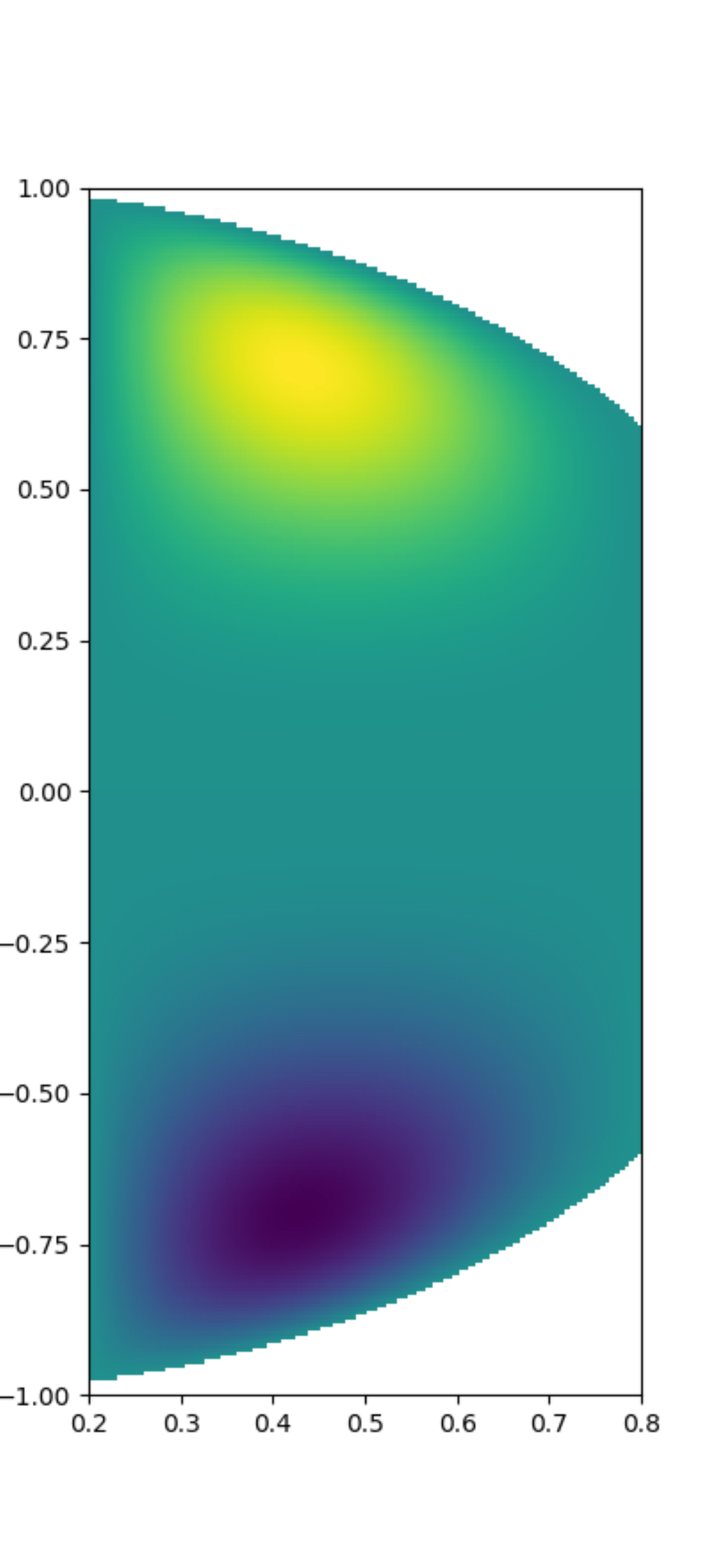}
	\centering
	\end{subfigure}
	\begin{subfigure}{0.3\textwidth}
	\includegraphics[scale=0.3]{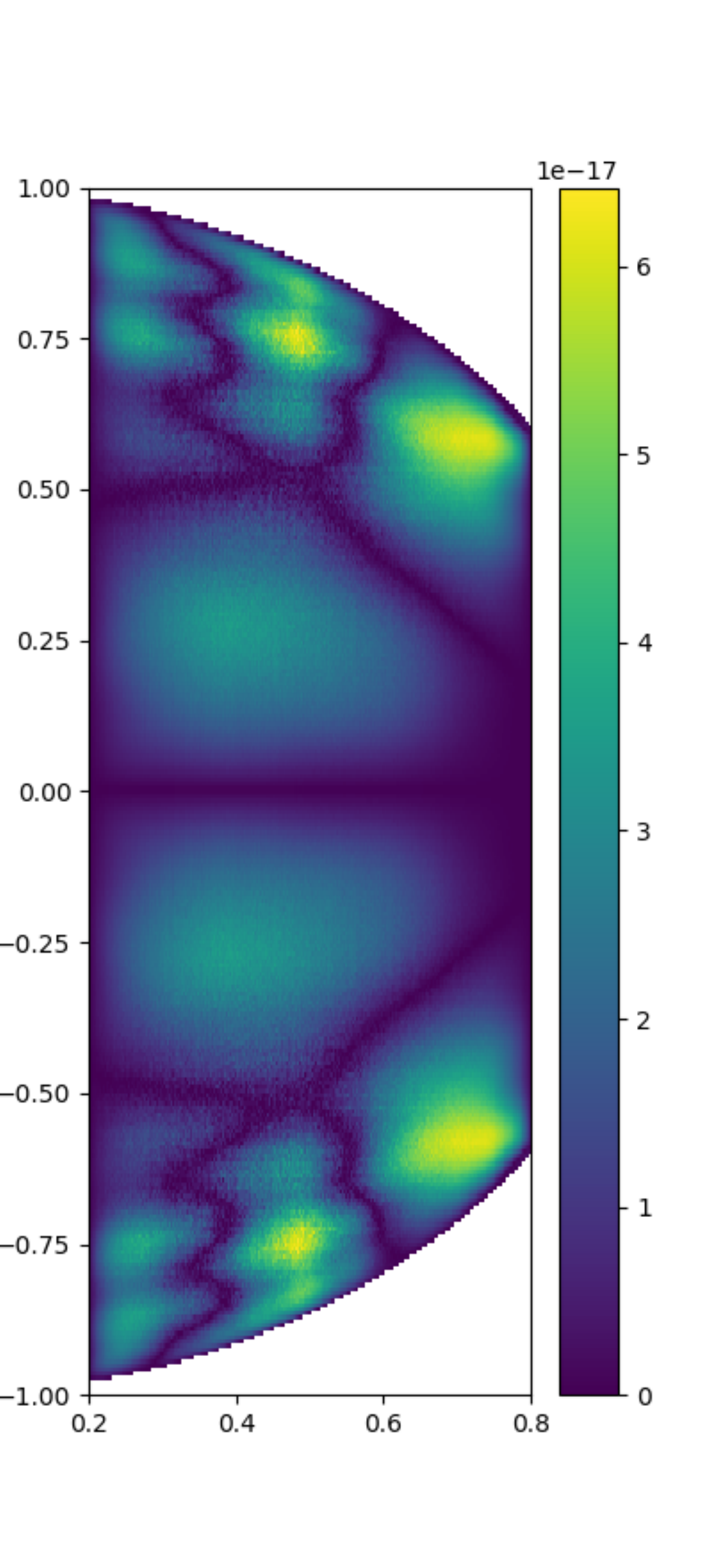}
	\centering
	\end{subfigure}
	\centering
	\caption{The computed solution to $\Delta u = f$ with zero boundary conditions compared with the exact solution $u(x,y) = \Wiii (x,y) y^3 \exp(x)$. Left: Computed. Centre: Exact. Right: Plot of the error (colourbar is shown to demonstrate magnitude of the error is of the order $10^{-17}$)}
	\centering
	\label{fig:poissonexact}
\end{figure}

\begin{figure}[t]
	\begin{subfigure}{0.3\textwidth}
	\centering
	\includegraphics[scale=0.3]{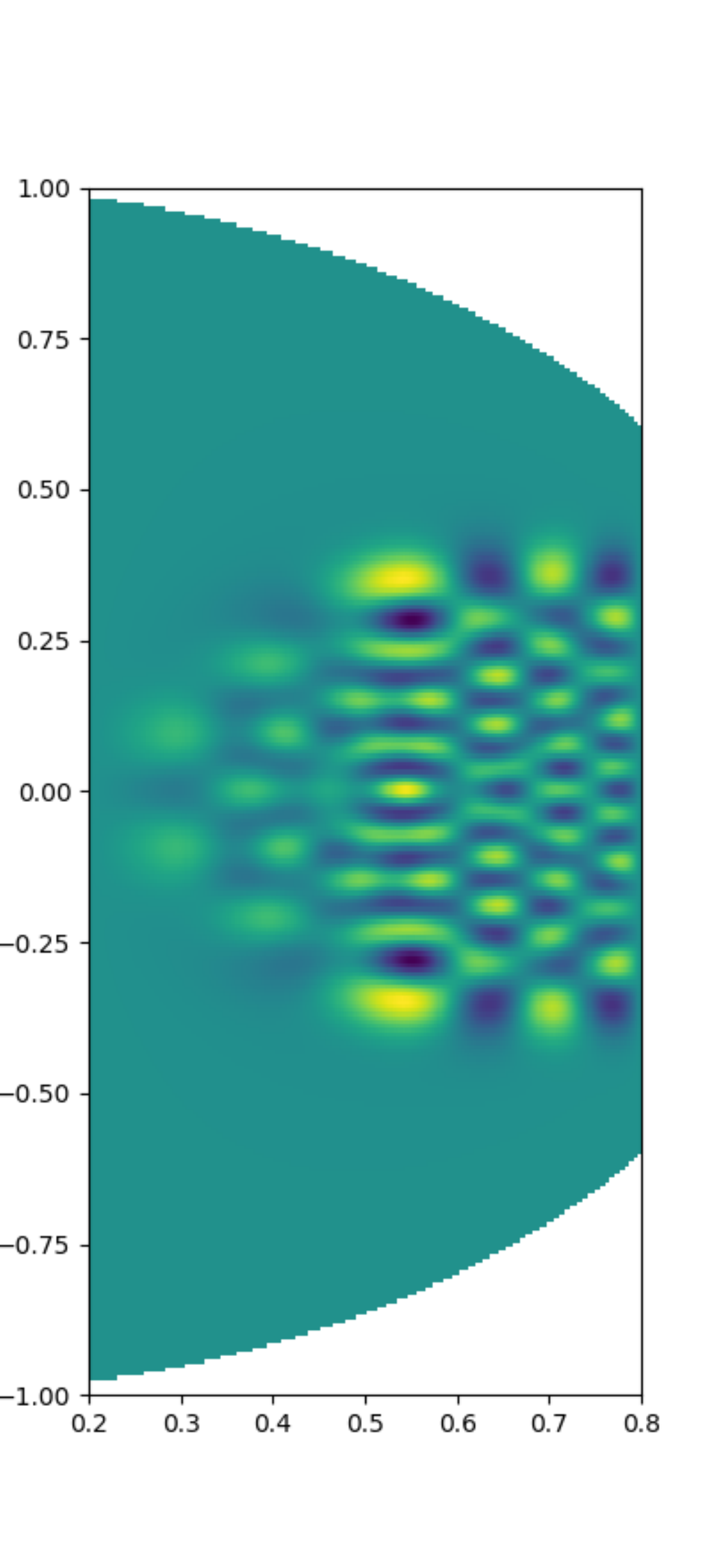}
	\end{subfigure}
	\begin{subfigure}{0.5\textwidth}
	\includegraphics[scale=0.5]{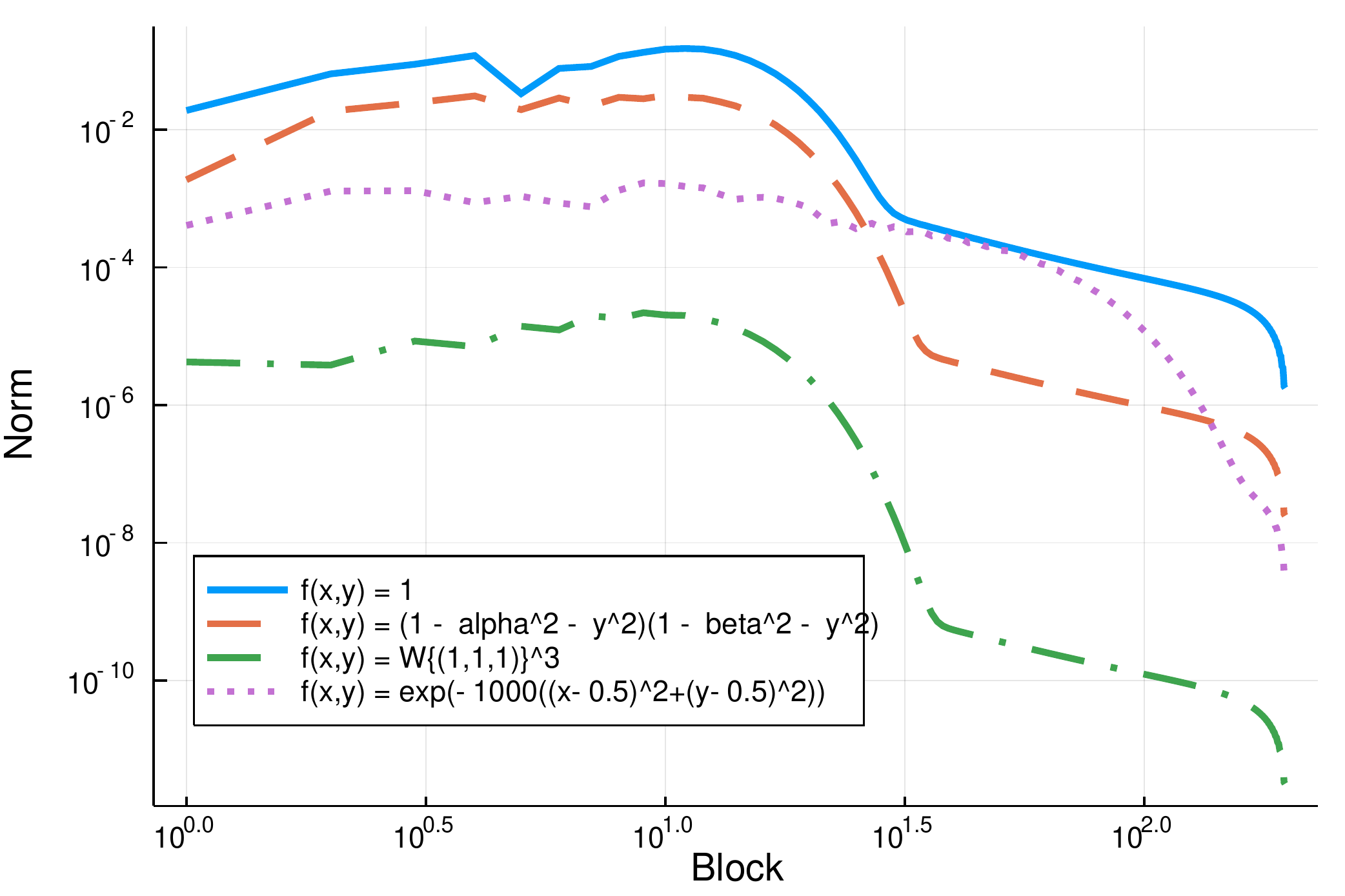}
	\centering
	\end{subfigure}
	\caption{Left: The computed solution to $\Delta u + k^2 \: v \: u = f$ with zero boundary conditions with $f(x,y) = x(1-x^2-y^2)e^x$, $v(x,y) = 1 - (3(x-1)^2 + 5y^2)$ and $k = 100$. Right: The norms of each block of the computed solution of the Helmholtz equation with the given right hand side functions, with $k=20$ and $v(x,y) = 1 - (3(x-1)^2 + 5y^2)$.}
	\centering
	\label{fig:helmholtz}
\end{figure}

\begin{figure}[t]
	\begin{subfigure}{0.3\textwidth}
	\centering
	\includegraphics[scale=0.3]{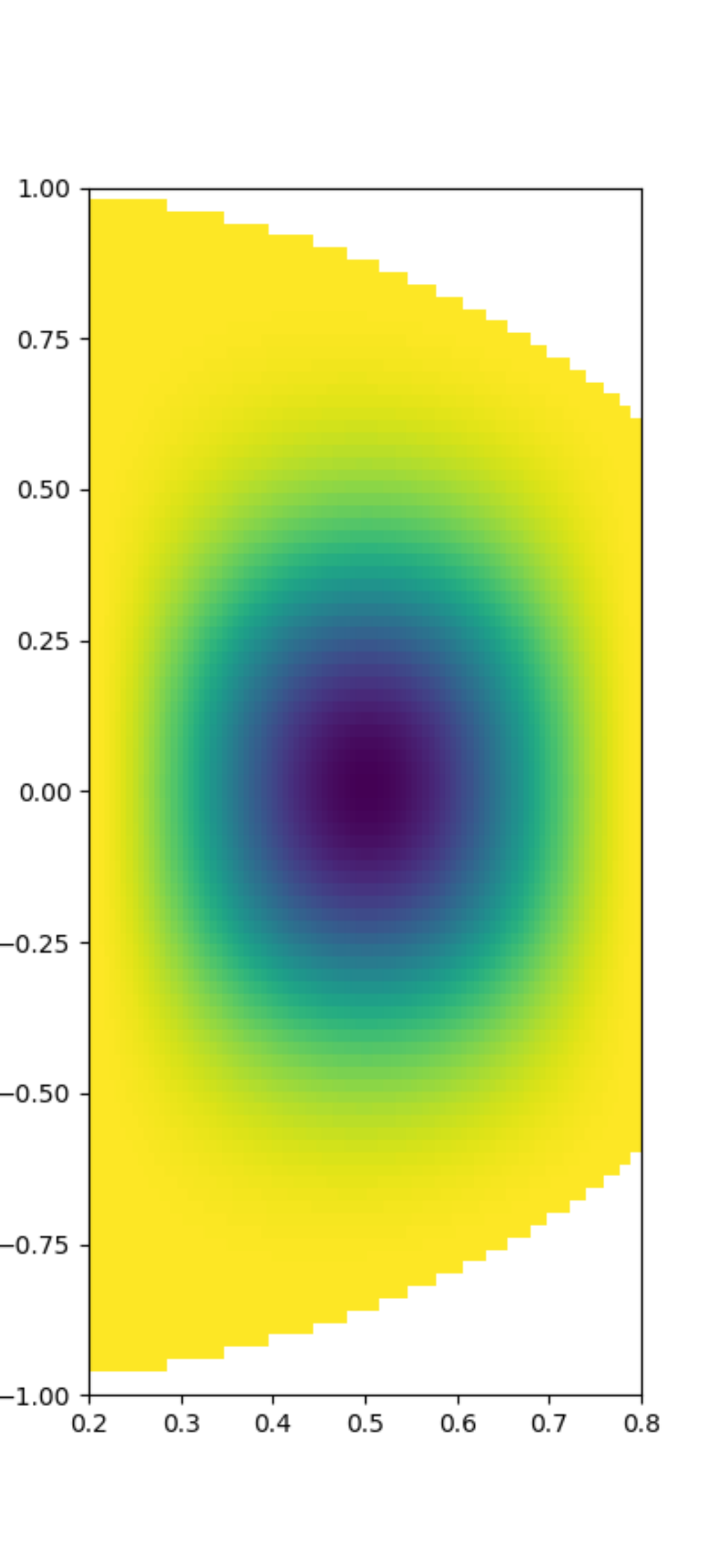}
	\end{subfigure}
	\begin{subfigure}{0.5\textwidth}
	\includegraphics[scale=0.5]{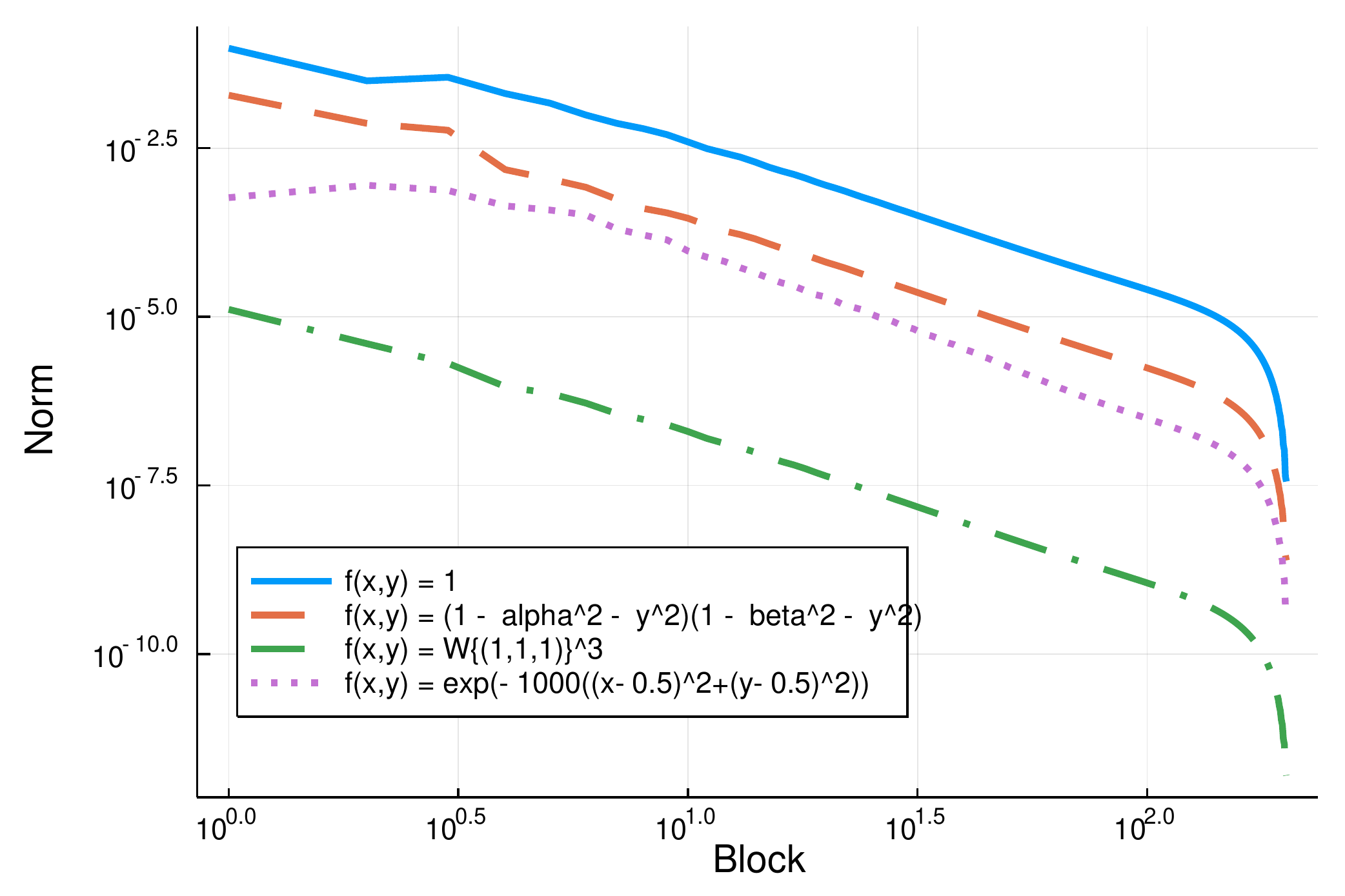}
	\centering
	\end{subfigure}
	\caption{Left: The computed solution to $\Delta^2 u = f$ with zero Dirichlet and Neumann boundary conditions with $f(x,y) = 1 + \text{erf}(5(1 - 10((x - 0.5)^2 + y^2)))$. Right: The norms of each block of the computed solution of the biharmonic equation with the given right hand side functions.}
	\centering
	\label{fig:biharmonic}
\end{figure}

The Poisson equation is the classic problem of finding \(u(x,y)\) given a function \(f(x,y)\) such that:
\begin{align}
	\begin{cases}
    		\Delta u(x,y) = f(x,y) \quad \text{in } \Omega \\
		u(x,y) = 0 \quad \text{on } \partial \Omega
	\end{cases}.
	\label{eqn:poisson}
\end{align}
noting the imposition of zero Dirichlet boundary conditions on $u$.

We can tackle the problem as follows. Denote the coefficient vector for expansion of $u$ in the $\bigWiii$ OP basis up to degree $N$ by $\mathbf{u}$, and the coefficient vector for expansion of $f$ in the $\bighdopiii$ OP basis up to degree $N$ by $\mathbf{f}$. Since $f$ is known, we can obtain $\mathbf{f}$ using the quadrature rule above. In matrix-vector notation, our system hence becomes:
\begin{align*}
    \laplacewiii \mathbf{u} = \mathbf{f}
\end{align*}
which can be solved to find $\mathbf{u}$.
In Figure \ref{fig:poisson} we see the solution to the Poisson equation with zero boundary conditions given in (\ref{eqn:poisson}) in the disk-slice $\Omega$. In Figure \ref{fig:poisson} we also show the norms of each block of calculated coefficients of the approximation for four right-hand sides of the Poisson equation with N = 200, that is, 20,301 unknowns. The rate of decay in the coefficients is a proxy for the rate of convergence of the computed solution: as typical of spectral methods, we expect the numerical scheme to converge at the same rate as the coefficients decay. We see that we achieve algebraic convergence for the first three examples, noting that for right hand-sides that vanish at the corners of our disk-slice ($x\in\{\alpha,\beta\}, \: y = \pm \rho(x)$) we observe faster convergence. 


In Figure \ref{fig:poissonexact} we see an example where the solution calculated to the Poisson equation is shown together with a plot of the exact solution and the error. The example was chosen so that the exact solution was $u(x,y) = \Wiii (x,y) y^3 \exp(x)$, and thus the RHS function $f$ would be $f(x,y) = \Delta \big[\Wiii (x,y) y^3 \exp(x)\big]$. We see that the computed solution is almost exact.

\subsection{Inhomogeneous variable-coefficient Helmholtz}

Find \(u(x,y)\) given functions $v$, $f : \Omega \to \R$ such that:
\begin{align}
	\begin{cases}
    		\Delta u(x,y) + k^2 \: v(x,y) \; u(x,y) = f(x,y) \quad \text{in } \Omega \\
		u(x,y) = 0 \quad \text{on } \partial \Omega
	\end{cases}.
	\label{eqn:helmholtz}
\end{align}
where $k \in \R$, noting the imposition of zero Dirichlet boundary conditions on $u$.

We can tackle the problem as follows. Denote the coefficient vector for expansion of $u$ in the $\bigWiii$ OP basis up to degree $N$ by $\mathbf{u}$, and the coefficient vector for expansion of $f$ in the $\bighdopiii$ OP basis up to degree $N$ by $\mathbf{f}$. Since $f$ is known, we can obtain  the coefficients $\mathbf{f}$ using the quadrature rule above. We can obtain the matrix operator for the variable-coefficient function $v(x,y)$ by using the Clenshaw algorithm with matrix inputs as the Jacobi matrices ${J_x^{(0,0,0)}}^\top, {J_y^{(0,0,0)}}^\top$, yielding an operator matrix of the same dimension as the input Jacobi matrices a la the procedure introduced in \cite{olver2019triangle}. We can denote the resulting operator acting on coefficients in the $\bighdopooo$ space by $V({J_x^{(0,0,0)}}^\top, {J_y^{(0,0,0)}}^\top)$. In matrix-vector notation, our system hence becomes:
\begin{align*}
    (\laplacewiii + k^2 T^{(0,0,0)\to(1,1,1)} \: V({J_x^{(0,0,0)}}^\top, {J_y^{(0,0,0)}}^\top) \: T_W^{(1,1,1)\to(0,0,0)}) \mathbf{u} = \mathbf{f}
\end{align*}
which can be solved to find $\mathbf{u}$. We can see the sparsity and structure of this matrix system in Figure \ref{fig:sparsity} with $v(x,y) = xy^2$ as an example. In Figure \ref{fig:helmholtz} we see the solution to the inhomogeneous variable-coefficient Helmholtz equation with zero boundary conditions given in (\ref{eqn:helmholtz}) in the half-disk $\Omega$, with $k=100$, $v(x,y) = 1 - (3(x-1)^2 + 5y^2)$ and $f(x,y) = x(1-x^2-y^2)e^x$. In Figure \ref{fig:helmholtz} we also show the norms of each block of calculated coefficients of the approximation for four right-hand sides of the inhomogeneous variable-coefficient Helmholtz equation with $k=20$ and $v(x,y) = 1 - (3(x-1)^2 + 5y^2)$ using N = 200, that is, 20,301 unknowns. The rate of decay in the coefficients is a proxy for the rate of convergence of the computed solution. We see that we achieve algebraic convergence for the first three examples, noting that for right hand sides that vanish at the corners of our disk-slice ($x\in\{\alpha,\beta\}, \: y = \pm \rho(x)$) we see faster convergence.


We can extend this to constant non-zero boundary conditions by simply noting that the problem 
\begin{align*}
	\begin{cases}
    		\Delta u(x,y) + k^2 \: v(x,y) \; u(x,y) = f(x,y) \quad \text{in } \Omega \\
		u(x,y) = c \in \R \quad \text{on } \partial \Omega
	\end{cases}
\end{align*}
is equivalent to letting $u = \tilde{u} + c$ and solving
\begin{align*}
	\begin{cases}
    		\Delta \tilde{u}(x,y) + k^2 \: v(x,y) \; \tilde{u}(x,y) = f(x,y) - c \: k^2 \: v(x,y) \; =: g(x,y)  \quad \text{in } \Omega \\
		\tilde{u}(x,y) = 0 \quad \text{on } \partial \Omega.
	\end{cases}.
\end{align*}

\subsection{Biharmonic equation}

Find \(u(x,y)\) given a function \(f(x,y)\) such that:
\begin{align}
	\begin{cases}
    		\Delta^2 u(x,y) = f(x,y) \quad \text{in } \Omega \\
		u(x,y) = 0, \quad \frac{\partial u}{\partial n}(x,y) = 0 \quad \text{on } \partial \Omega
	\end{cases}.
	\label{eqn:biharmonic}
\end{align}
where $\Delta^2$ is the Biharmonic operator, noting the imposition of zero Dirichlet and Neumann boundary conditions on $u$. In Figure \ref{fig:biharmonic} we see the solution to the Biharmonic equation (\ref{eqn:biharmonic}) in the disk-slice $\Omega$. In Figure \ref{fig:biharmonic} we also show the norms of each block of calculated coefficients of the approximation for four right-hand sides of the biharmonic equation with N = 200, that is, 20,301 unknowns.  We see that we achieve algebraic convergence for the first three examples, noting that for right hand sides that vanish at the corners of our disk-slice ($x\in\{\alpha,\beta\}, \: y = \pm \rho(x)$) we see faster convergence.


%
\section{Conclusions}

We have shown that bivariate orthogonal polynomials can lead to sparse discretizations of general linear PDEs on specific domains whose boundary is specified by an algebraic curve---notably here the disk-slice---with Dirichlet boundary conditions. This work extends  the triangle case \cite{beuchler2006new,li2010optimal,olver2019triangle} to non-classical geometries, and forms a building block in developing an $hp-$finite element method to solve PDEs on other polygonal domains by using suitably shaped elements, for example, by dividing the disk into disk slice elements. 
This work serves as a stepping stone to constructing similar methods to solve partial differential equations on 3D sub-domains of the sphere, such as spherical caps and spherical triangles. In particular, orthogonal polynomials (OPs) in cartesian coordinates ($x$, $y$, and $z$) on a half-sphere can be represented using two families of OPs on the half-disk, see \cite[Theorem 3.1]{olver2018orthogonal} for a similar construction of OPs on an arc in 2D, and it is clear from the construction in this paper that discretizations of spherical gradients and Laplacian's are sparse on half-spheres and other suitable sub-components of the sphere. The resulting sparsity in high-polynomial degree discretizations presents an attractive alternative to methods based on bijective mappings (e.g., \cite{DGShallowWater,FEMShallowWater,boyd2005sphere}). Constructing these sparse spectral methods for surface PDEs on half-spheres, spherical caps, and spherical triangles is future work, and has applications in weather prediction \cite{staniforth2012horizontal}. Other extensions include a full $hp$-finite element method on sections of a disk, which has applications in turbulent pipe flow.

\noindent{\bf Acknowledgements}:  We would like to thank the anonymous referees for their helpful comments.  The second author was supported in part by a Leverhulme Trust Research Grant.

\appendix
\section{P-finite element methods using sparse operators}\label{Appendix:PFEM}

\begin{figure}[t]
	\begin{subfigure}{0.3\textwidth}
	\includegraphics[scale=0.28]{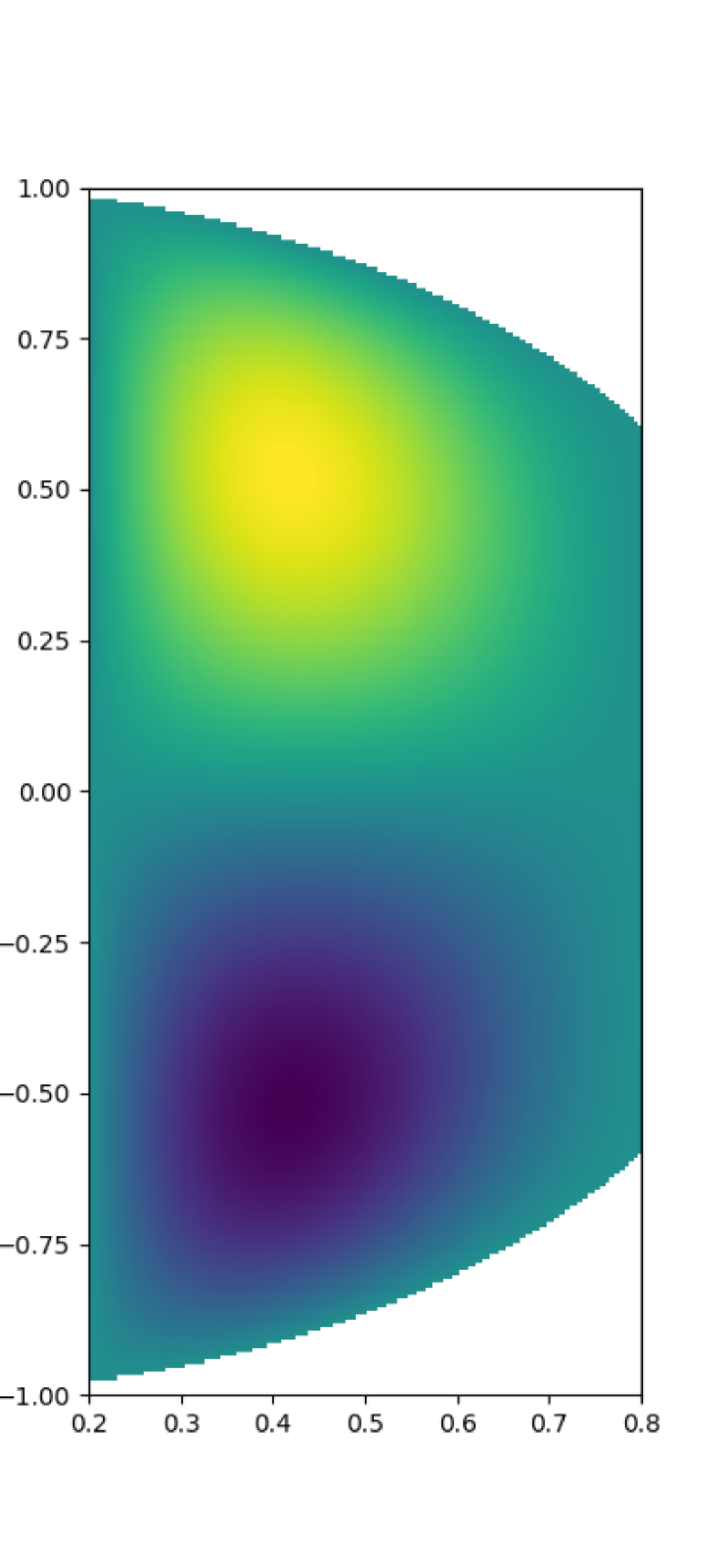}
	\centering
	\end{subfigure}
	\begin{subfigure}{0.3\textwidth}
	\includegraphics[scale=0.3]{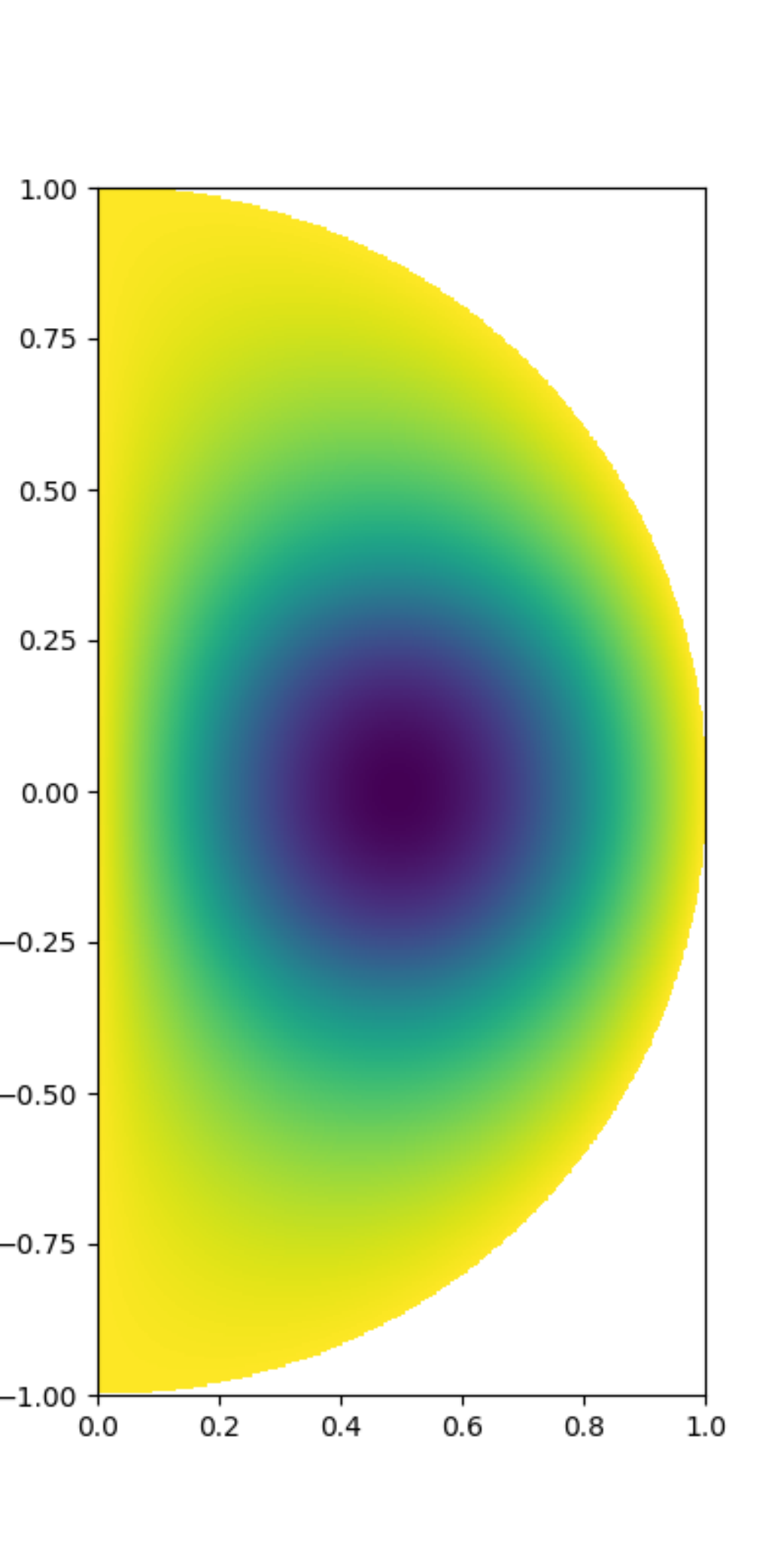}
	\centering
	\end{subfigure}
	\begin{subfigure}{0.3\textwidth}
	\includegraphics[scale=0.3]{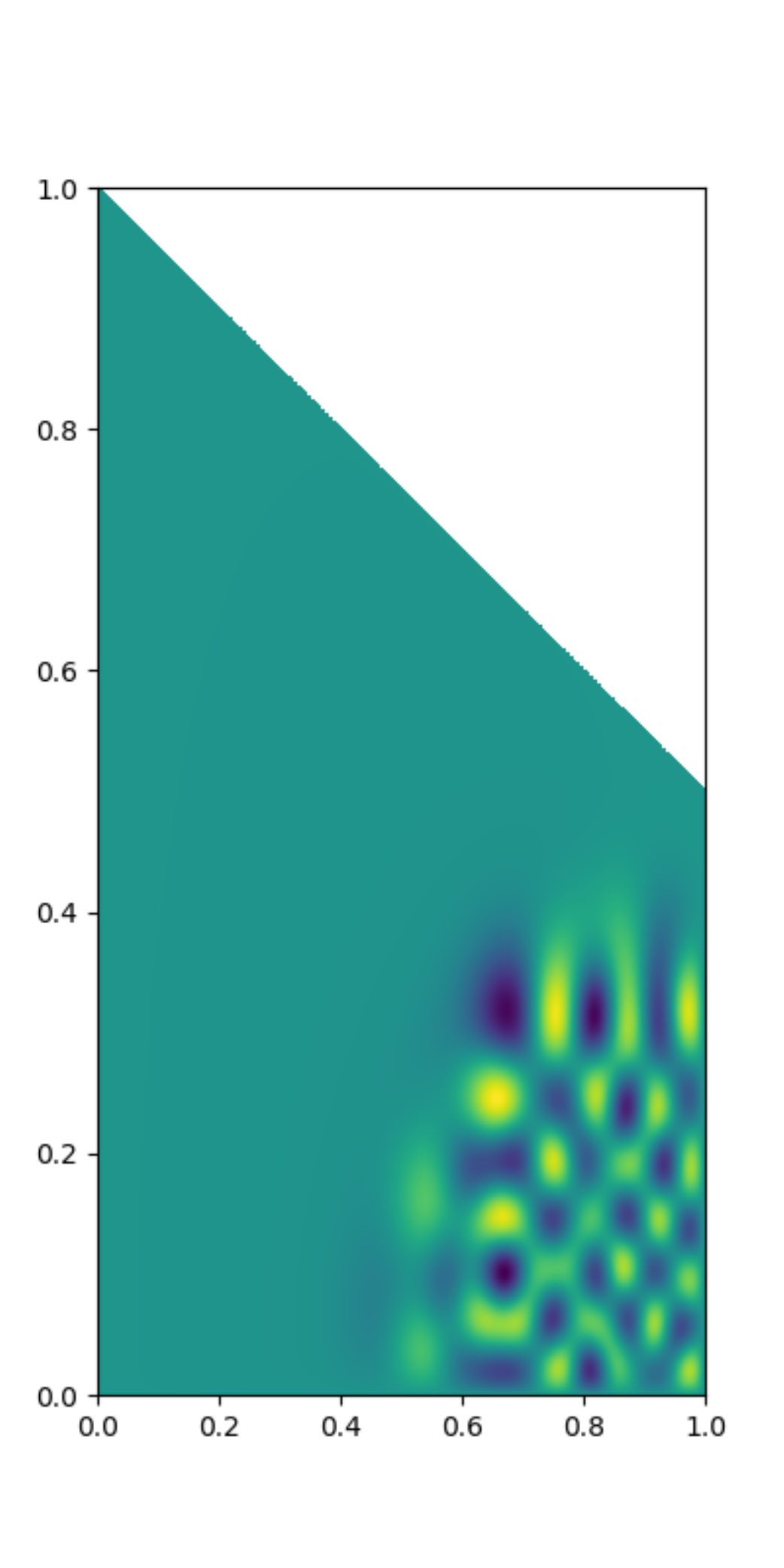}
	\centering
	\end{subfigure}
	\centering
	\caption{Left: The computed solution to $\Delta u = f$ with zero boundary conditions with $f(x,y) = \Wiii(x,y) y \cos(x)$ in the disk-slice using the $p$-FEM approach with a single element. Centre: The computed solution to $\Delta u = f$ with zero boundary conditions with $f(x,y) = 1 + \text{erf}(5(1 - 10((x - 0.5)^2 + y^2)))$ in the half-disk. Right: The computed solution to $\Delta u + k^2 \: v \: u = f$ with zero boundary conditions with $f(x,y) = (1-x) \: x \: y \: (1- \half x - y) \: e^x$, $v(x,y) = 1 - (3(x-1)^2 + 5y^2)$ and $k = 100$. in the trapezium.}
	\centering
	\label{fig:appendixfigs}
\end{figure}

We follow the method of \cite{beuchler2006new} to construct a sparse $p$-finite element method in terms of the operators constructed above, with the benefit of ensuring that the resulting discretisation is symmetric. Consider the 2D Dirichlet problem on a domain $\Omega$:
\begin{align*}
	\begin{cases}
         - \Delta u(x,y) = f(x,y) \quad \text{in } \Omega \\
         u = 0 \quad \text{on } \partial \Omega
         \end{cases}
\end{align*}
This has the weak formulation for any test function $v \in V := H_0^1(\Omega) = \{v \in H^1(\Omega) \quad | \quad v|_{\partial \Omega} = 0 \}$,
\begin{align*}
	L(v) := \int_\Omega f \: v \: d\mathbf{x} = \int_\Omega \nabla u \cdot \nabla v \: d\mathbf{x} =: a(u,v).
\end{align*}

In general, we would let $\FEset$ be the set of elements $\element$ that make up our finite element discretisation of the domain, where each $\element$ is a trapezium or disk slice for example. 

In this section, we limit our discretisation to a single element, that is we let $\element = \Omega$ for a disk-slice domain. We can choose our finite dimensional space $V_p = \{v_p \in V \quad | \quad {\rm deg}\,(v_p|_\element) \le p\}$ for some $p \in \N$.

We seek $u_p \in V_p$ s.t.
\begin{align}
	L(v_p) = a(u_p,v_p) \quad \forall \: v_p \in V_p.
	\label{eqn:FEMweakform}
\end{align}

Define $\Lambda^{(a,b,c)} :=  \ip< \bighdop^{(a,b,c)}, \: {\bighdop^{(a,b,c)}}^\top >_{\Wabc}$ where $\Wabc$ is the weight with which the OPs in $\bighdop^{(a,b,c)}$ are orthogonal with respect to. Note that due to orthogonality this is a diagonal matrix. We can choose a basis for $V_p$ by using the weighted orthogonal polynomials on $\element$ with parameters $a = b = 1$:
\begin{align*}
\bigWiii(x,y) &:= \begin{pmatrix}
		\bigWiii_0(x,y) \\
		\bigWiii_1(x,y) \\
		\bigWiii_2(x,y) \\
		\vdots \\
		\bigWiii_p(x,y)
	\end{pmatrix}, \\
\bigWiii_n(x,y) &:=  \Wiii(x,y) \begin{pmatrix}
						\hdopiii_{n,0}(x,y) \\
						\vdots \\
						\hdopiii_{n,n}(x,y)
					\end{pmatrix} \in \R^{n+1} \quad \forall n = 0,1,2,\dots,p,
\end{align*}
and rewrite (\ref{eqn:FEMweakform}) in matrix form:
\begin{align*}
	a(u_p,v_p) &= \int_\element \nabla u_p \cdot \nabla v_p \: \D \mathbf{x} \\
	&= \int_\element \begin{pmatrix}
					\partial_x v_p \\
					\partial_y v_p
				\end{pmatrix}^\top 
				\begin{pmatrix}
					\partial_x u_p \\
					\partial_y u_p
				\end{pmatrix} \: \D \mathbf{x}
				\\
	&= \int_\element \begin{pmatrix}
					\bighdopooo^\top \Wiii_x \mathbf{v} \\
					\bighdopooo^\top T_W^{(1,1,0)\to(0,0,0)} \Wiii_y \mathbf{v}
				\end{pmatrix}^\top 
				\begin{pmatrix}
					\bighdopooo^\top \Wiii_x \mathbf{u} \\
					\bighdopooo^\top T_W^{(1,1,0)\to(0,0,0)} \Wiii_y \mathbf{u}
				\end{pmatrix} \: \D \mathbf{x}
				\\
	&= \int_\element \Big( \mathbf{v}^\top {\Wiii_x}^\top \bighdopooo \bighdopooo^\top \Wiii_x \mathbf{u} \nonumber \\
					& \quad \quad \quad + \mathbf{v}^\top ({T_W^{(1,1,0)\to(0,0,0)} \Wiii_y})^\top \bighdopooo \bighdopooo^\top T_W^{(1,1,0)\to(0,0,0)} \Wiii_y \mathbf{u}  \Big) \: \D \mathbf{x} \\
	&= \mathbf{v}^\top \: \Big( {\Wiii_x}^\top \Lambda^{(0,0,0)} \Wiii_x \\
	& \quad \quad \quad \quad + ({T_W^{(1,1,0)\to(0,0,0)} \Wiii_y})^\top \Lambda^{(0,0,0)} T_W^{(1,1,0)\to(0,0,0)} \Wiii_y \Big) \: \mathbf{u}
\end{align*}
where $\mathbf{u}, \mathbf{v}$ are the coefficient vectors of the expansions of $u_p, v_p \in V_p$ respectively in the $V_p$ basis ($\bigWiii$ OPs), and
\begin{align*}
	L(v_p) &= \int_\element \: v_p \: f \: \D \mathbf{x}\\
	&= \int_\element \: \mathbf{v}^\top \: \bigWiii \: \bighdopiii^\top \: \mathbf{f} \: \D \mathbf{x} \\
	&= \mathbf{v}^\top \: \ip< \bighdopiii, {\bighdopiii}^\top >_{\Wiii} \: \D \mathbf{x} \\
	&= \mathbf{v}^\top \Lambda^{(1,1,1)} \: \mathbf{f},
\end{align*}
where $\mathbf{f}$ is the coefficient vector for the expansion of the function $f(x,y)$ in the $\bighdopiii$ OP basis.

Since (\ref{eqn:FEMweakform}) is equivalent to stating that
\begin{align*}
	L(\Wiii \hdopiii_{n,k}) = a(u_p,\Wiii \hdopiii_{n,k}) \quad \forall \: n = 0,\dots,p, \: k = 0,\dots,n,
\end{align*}
(i.e. holds for all basis functions of $V_p$) by choosing $v_p$ as each basis function, we can equivalently write the linear system for our finite element problem as:
\begin{align*}
	A\mathbf{u} = \tilde{\mathbf{f}}.
\end{align*}
where the (element) stiffness matrix $A$ is defined by 
\begin{align*}
	A = {\Wiii_x}^\top \Lambda^{(0,0,0)} \Wiii_x + ({T_W^{(1,1,0)\to(0,0,0)} \Wiii_y})^\top \Lambda^{(0,0,0)} T_W^{(1,1,0)\to(0,0,0)} \Wiii_y
\end{align*}
and the load vector $\tilde{\mathbf{f}}$ is given by 
\begin{align*}
	\tilde{\mathbf{f}} = \Lambda^{(1,1,1)} \: \mathbf{f}.
\end{align*}

Note that since we have sparse operator matrices for partial derivatives and basis-transform, we obtain a symmetric sparse (element) stiffness matrix, as well as a sparse operator matrix for calculating the load vector (rhs).

\section{End-Disk-Slice}\label{Appendix:HalfDisk}

The work in this paper on the disk-slice can be easily transferred to the special-case domain of the end-disk-slice , such as half disks, by which we mean
\begin{align*}
	\Omega := \{(x,y) \in \R^2 \quad | \quad \alpha < x < \beta, \: \gamma \rho(x) < y < \delta \rho(x)\}
\end{align*}
with
\begin{align*}
\begin{cases}
\alpha &\in (0,1) \\
\beta &:= 1 \\
(\gamma, \delta) &:= (-1,1) \\
\rho(x) &:= (1-x^2)^{\half}.
\end{cases}
\end{align*}
Our 1D weight functions on the intervals $(\alpha, \beta)$ and $(\gamma, \delta)$ respectively are then given by:
\begin{align*}
\begin{cases}
\genjacw^{(a,b)}(x) &:= (x - \alpha)^{a} \: \rho(x)^{b} \\
\jacw^{(a)}(x) &:= (1-x^2)^b.
\end{cases}
\end{align*}

Note here how we can remove the need for third parameter, which is why we consider this a special case. This will make some calculations easier, and the operator matrices more sparse. The weight $\jacw^{(b)}(x)$ is a still the same ultraspherical weight (and the corresponding OPs are the Jacobi polynomials $\{\jac_n^{(b, b)}\}$). $\genjacw^{(a,b)}(x)$ is the (non-classical) weight for the OPs denoted $\{\genjac_n^{(a,b)}\}$. Thus we arrive at the two-parameter family of 2D orthogonal polynomials $\{\hdopnkab\}$ on $\Omega$ given by, for \(0 \le k \le n, \: n = 0,1,2,\dots,\)
\begin{align*}
	\hdopnkab(x,y) := \genjacnmk^{(a, 2b+2k+1)}(x) \: \rho(x)^k \: \jac_k^{(b,b)}\fpr(\frac{y}{\rho(x)}), \quad (x,y) \in \Omega, 
\end{align*}
orthogonal with respect to the weight
\begin{align*}
	\Wab(x,y) &:= \genjacw^{(a,2b)}(x) w^{(b)}_P\fpr(\frac{y}{\rho(x)}) \nonumber \\
	&= (x - \alpha)^{a} \: (\rho(x)^2 -y^2)^b \nonumber \\
	&= (x - \alpha)^{a} \: (1 - x^2 -y^2)^b , \quad (x,y) \in \Omega.
\end{align*}

The sparsity of operator matrices for partial differentiation by $x, y$ as well as for parameter transformations generalise to such end-disk-slice domains. For instance, if we inspect the proof of Lemma \ref{theorem:sparsityofdifferentialoperators}, we see that it can easily generalise to the weights and domain $\Omega$ for an end-disk-slice.

In Figure \ref{fig:appendixfigs} we see the solution to the Poisson equation with zero boundary conditions in the half-disk $\Omega$ with $(\alpha,\beta) := (0,1)$.

\section{Trapeziums}\label{Appendix:Trapezium}

We can further extend this work to trapezium shaped domains. Note that for any trapezium there exists an affine map to the canonical trapezium domain that we consider here, given by
\begin{align*}
	\Omega := \{(x,y) \in \R^2 \quad | \quad \alpha < x < \beta, \: \gamma \rho(x) < y < \delta \rho(x)\}
\end{align*}
with 
\begin{align*}
\begin{cases}
(\alpha, \beta) &:= (0,1) \\
(\gamma, \delta) &:= (0,1) \\
\rho(x) &:= 1- \xi x, \quad \xi \in (0,1) \\
\genjacw^{(a,b,c)}(x) &:= (\beta - x)^a \: (x - \alpha)^{b} \: \rho(x)^{c} = (1-x)^{a} \: x^b \: (1-\xi x)^{c} \\
\jacw^{(a,b)}(x) &:= (\delta-x)^a \: (x - \gamma)^b = (1-x)^a \: x^b.
\end{cases}
\end{align*}
The weight $\jacw^{(a,b)}(x)$ is the weight for the shifted Jacobi polynomials on the interval $[0,1]$, and hence the corresponding OPs are the shifted Jacobi polynomials $\{\tilde{P}_n^{(a, b)}\}$. We note that the shifted Jacobi polynomials relate to the normal Jacobi polynomials by the relationship $\tilde{P}_n^{(a,b)}(x) = \jac_n^{(a,b)}(2x-1)$ for any degree $n = 0,1,2,\dots$ and $x \in [0,1]$. $\genjacw^{(a,b,c)}(x)$ is the (non-classical) weight for the OPs we dentote $\{\genjac_n^{(a,b,c)}\}\). Thus we arrive at the four-parameter family of 2D orthogonal polynomials $\{\hdopnk^{(a,b,c,d)}\}$ on $\Omega$ given by, for \(0 \le k \le n, \: n = 0,1,2,\dots,\)
\begin{align*}
	\hdopnk^{(a,b,c,d)}(x,y) := \genjacnmk^{(a, b, c+d+2k+1)}(x) \: \rho(x)^k \: \tilde{P}_k^{(d,c)}\fpr(\frac{y}{\rho(x)}), \quad (x,y) \in \Omega, 
\end{align*}
orthogonal with respect to the weight
\begin{align*}
	W^{(a,b,c,d)}(x,y) &:= \genjacw^{(a,b,c+d)}(x) \: w^{(d,c)}_P\fpr(\frac{y}{\rho(x)}) \nonumber \\
	&= (1-x)^a \: x^b \: y^c \: (1- \xi x - y)^d, \quad (x,y) \in \Omega.
\end{align*}

In Figure \ref{fig:appendixfigs} we see the solution to the Helmholtz equation with zero boundary conditions in the trapezium $\Omega$ with $\xi := \half$.

\bibliography{halfdisk}

\begin{thebibliography}{10}

\bibitem{beuchler2006new}
Sven Beuchler and Joachim Schoeberl.
\newblock New shape functions for triangular p-{FEM} using integrated {J}acobi
  polynomials.
\newblock {\em Numerische Mathematik}, 103(3):339--366, 2006.

\bibitem{DGShallowWater}
Boris Bonev, Jan~S Hesthaven, Francis~X Giraldo, and Michal~A Kopera.
\newblock Discontinuous {G}alerkin scheme for the spherical shallow water
  equations with applications to tsunami modeling and prediction.
\newblock {\em Journal of Computational Physics}, 362:425--448, 2018.

\bibitem{boyd2005sphere}
John~P Boyd.
\newblock A {C}hebyshev/rational {C}hebyshev spectral method for the
  {H}elmholtz equation in a sector on the surface of a sphere: defeating corner
  singularities.
\newblock {\em Journal of Computational Physics}, 206(1):302--310, 2005.

\bibitem{dunkl2014orthogonal}
Charles~F Dunkl and Yuan Xu.
\newblock {\em {O}rthogonal {P}olynomials of {S}everal {V}ariables}.
\newblock Number 155. Cambridge University Press, 2014.

\bibitem{li2010optimal}
Huiyuan Li and Jie Shen.
\newblock Optimal error estimates in {J}acobi-{W}eighted {S}obolev spaces for
  polynomial approximations on the triangle.
\newblock {\em Mathematics of Computation}, 79(271):1621--1646, 2010.

\bibitem{magnus1995painleve}
Alphonse~P Magnus.
\newblock {P}ainlev{\'e}-type differential equations for the recurrence
  coefficients of semi-classical orthogonal polynomials.
\newblock {\em Journal of Computational and Applied Mathematics},
  57(1-2):215--237, 1995.

\bibitem{DLMF}
Frank~WJ Olver, Daniel~W Lozier, Ronald~F Boisvert, and Charles~W Clark.
\newblock {\em {NIST} handbook of mathematical functions}.
\newblock Cambridge University Press, 2010.

\bibitem{olver2013fast}
Sheehan Olver and Alex Townsend.
\newblock A fast and well-conditioned spectral method.
\newblock {\em SIAM Review}, 55(3):462--489, 2013.

\bibitem{olver2018recurrence}
Sheehan Olver, Alex Townsend, and Geoff Vasil.
\newblock Recurrence relations for orthogonal polynomials on a triangle.
\newblock In {\em ICOSAHOM 2018 Proceedings}, 2018.

\bibitem{olver2019triangle}
Sheehan Olver, Alex Townsend, and Geoff Vasil.
\newblock A sparse spectral method on triangles.
\newblock {\em SIAM J. Sci. Comput.}, 2019.

\bibitem{olver2018orthogonal}
Sheehan Olver and Yuan Xu.
\newblock Orthogonal structure on a quadratic curve.
\newblock {\em IMA J. Numer. Anal.}, 2020.

\bibitem{FEMShallowWater}
J~Shipton, TH~Gibson, and CJ~Cotter.
\newblock Higher-order compatible finite element schemes for the nonlinear
  rotating shallow water equations on the sphere.
\newblock {\em Journal of Computational Physics}, 375:1121--1137, 2018.

\bibitem{staniforth2012horizontal}
Andrew Staniforth and John Thuburn.
\newblock Horizontal grids for global weather and climate prediction models: a
  review.
\newblock {\em Quarterly Journal of the Royal Meteorological Society},
  138(662):1--26, 2012.

\bibitem{vasil2016tensor}
Geoffrey~M Vasil, Keaton~J Burns, Daniel Lecoanet, Sheehan Olver, Benjamin~P
  Brown, and Jeffrey~S Oishi.
\newblock Tensor calculus in polar coordinates using {J}acobi polynomials.
\newblock {\em Journal of Computational Physics}, 325:53--73, 2016.

\end{thebibliography}

\end{document}